\journalname{Computational Mechanics}
  \def\clap#1{\hbox to 0pt{\hss#1\hss}}
\providecommand{\mat}[1]{\bm{#1}}%
\renewcommand{\vec}[1]{\mathbf{#1}}
\providecommand{\mC}{\ensuremath{\mat{C}}}
\providecommand{\mF}{\ensuremath{\mat{F}}}
\providecommand{\mH}{\ensuremath{\mat{H}}}
\providecommand{\mI}{\ensuremath{\mat{I}}}
\providecommand{\mL}{\ensuremath{\mat{L}}}
\providecommand{\mP}{\ensuremath{\mat{P}}}
\providecommand{\mQ}{\ensuremath{\mat{Q}}}
\providecommand{\mR}{\ensuremath{\mat{R}}}
\providecommand{\mU}{\ensuremath{\mat{U}}}
\providecommand{\mV}{\ensuremath{\mat{V}}}
\providecommand{\mW}{\ensuremath{\mat{W}}}
\providecommand{\vc}{\ensuremath{\vec{c}}}
\providecommand{\vu}{\ensuremath{\vec{u}}}
\pgfplotsset{compat=1.14}
\begin{document}

\title{Bi-fidelity Reduced Polynomial Chaos Expansion for Uncertainty Quantification
}



\author{Felix Newberry         \and
        Jerrad Hampton         \and
        Kenneth Jansen         \and
        Alireza Doostan     
}


\institute{Felix Newberry \and Kenneth Jansen \and Alireza Doostan \at
              Ann and H.J. Smead Aerospace Engineering Sciences Department, University of Colorado, Boulder, CO 80303, USA, \\\email{Alireza.Doostan@colorado.edu}
           \and
           Jerrad Hampton 
                \at Centre Internacional de M\`etodes Num\`erics a l'Enginyeria, Esteve Terrades 5, 08860 Castelldefels, Spain
}

\date{Received: mm/dd/yyyy / Accepted: mm/dd/yyyy}

\maketitle

\begin{abstract}
\label{sec:abstract}
%

A ubiquitous challenge in design space explora\hyp{}tion or uncertainty quantification of complex engineering problems is the minimization of computational cost. A useful tool to ease the burden of solving such systems is model reduction. This work considers a stochastic model reduction method  (SMR), in the context of polynomial chaos (PC) expansions, where low-fidelity (LF) samples are leveraged to form a stochastic reduced basis. The reduced basis enables the construction of a bi-fidelity (BF) estimate of a quantity of interest from a small number of high-fidelity (HF) samples. A successful BF estimate approximates the quantity of interest with accuracy comparable to the HF model and computational expense close to the LF model. We develop new error bounds  for the SMR approach and present a procedure to practically utilize these bounds in order to assess the appropriateness of a given pair of LF and HF models for BF estimation. The effectiveness of the SMR approach, and the utility of the error bound are presented in three numerical examples. 

\keywords{Uncertainty quantification \and Bi-fidelity approximation \and Low-rank approximation \and Stochastic model reduction}

\end{abstract}

\section{Introduction}
\label{sec:introduction_pc}

A core motivation in engineering design is to understand the behavior of some quantity of interest (QoI) as a function of uncertain inputs. Stochastic variables may arise from uncertainties in measurements, model parameters{\color{black},} or boundary and initial conditions. Limited understanding of the influence that stochastic inputs have on the QoI may yield a surplus of confidence or mistaken reluctance to trust the model predictions. The field of uncertainty quantification (UQ) addresses this problem and has been the subject of much research \cite{ghanem1991stochastic,le2010spectral,xiu2010numerical}. 

A useful technique in UQ problems is to approximate the QoI with an expansion in multivariate orthogonal polynomials, known as the polynomial chaos (PC) expansion \cite{ghanem1991stochastic, xiu2002wiener}. In this work, we assume a $d$-dimensional vector of random inputs $\boldsymbol{\Xi} \coloneqq (\Xi_1, \dots, \Xi_d )$ with joint probability density function $f(\boldsymbol{\xi})$ and set of possible realizations $\Omega$. We consider the vector valued QoI, $\vu(\boldsymbol{\Xi}) \in \mathbb{R}^M$, assumed to have finite variance and defined over a spatial domain of the problem. We note that while this work focuses on spatial QoIs, the method described extends to temporal or spatio-temporal QoIs. The PC expansion approximates the vector QoI as
\begin{equation}
\vu(\boldsymbol{\Xi}) = \sum_{j=1}^\infty \vc_{j} \psi_{j}(\boldsymbol{\Xi}), \\
\label{eq:pce_inf}
\end{equation} 
where $\psi_{j}(\boldsymbol{\Xi})$ is a multivariate orthogonal polynomial evaluated at the random inputs and weighted by deterministic coefficients $\vc_{j} \in \mathbb{R}^M$. The polynomials $\psi_{j}(\boldsymbol{\Xi})$ are chosen to be orthogonal with respect to the probability measure $f(\boldsymbol{\xi})$. For instance, if $\boldsymbol{\Xi}$ follows a jointly uniform or Gaussian distribution, then $\psi_{j}(\boldsymbol{\Xi})$ are multivariate Legendre or Hermite polynomials, respectively \cite{xiu2002wiener}. We assume $\psi_{j}(\boldsymbol{\Xi})$ are normalized such that $\mathbb{E} [\psi^2_{j}(\boldsymbol{\Xi})] =1 $, where $\mathbb{E} [\cdot ]$ represents the mathematical expectation operator. The expansion (\ref{eq:pce_inf}) is truncated to
\begin{equation}
\vu(\boldsymbol{\Xi}) = \sum_{j=1}^P \vc_{j} \psi_{j}(\boldsymbol{\Xi}) + \boldsymbol{\delta}_{P}(\boldsymbol{\Xi}) \approx \sum_{j=1}^P \vc_{j} \psi_{j}(\boldsymbol{\Xi}), \\
\label{eq:pce_trunc}
\end{equation} 
where $\boldsymbol{\delta}_{P}$ denotes the PC truncation error. The truncated expansion (\ref{eq:pce_trunc}) is accurate provided the coefficients $\vc_{j}$ decay to zero in a properly ordered basis and that the QoI depends smoothly on the inputs $\boldsymbol{\Xi}$. An expansion with total order $p$ and stochastic dimension $d$ has $P = (p+d)!/(p!d!)$ basis functions. As $P \rightarrow \infty$, for a sufficiently smooth $\vu(\boldsymbol{\Xi})$, the PC expansion converges in the mean-square sense to $\vu$. 

The coefficients $\vc_{j}$ are a valuable tool for approximating statistics, constructing surrogate models that may integrate or differentiate our QoI or performing sensitivity analysis. 
In this work, we assemble these coefficients into the matrix $\mC :=[\vc_{1},\dots,\vc_{P}] \in \mathbb{R}^{M \times P}$. 
A common approach to determine $\mC$ is to construct a regression problem with Monte Carlo samples of the QoI. 
We denote individual realizations of $\boldsymbol{\Xi}$ as $\boldsymbol{\xi}_{i}$, and consider a set of $N$ input samples as $\{ \boldsymbol{\xi}_{i}\}^N_{i=1}$ and correspondingly QoI samples $\{ \vu (\boldsymbol{\xi}_{i}) \}^N_{i=1}$ organized in the data matrix $\mU \coloneqq [ \vu(\boldsymbol{\xi}_1), \dots, \vu(\boldsymbol{\xi}_N)]\in\mathbb{R}^{M\times N}$. 
We seek to solve for $\mC$ in the linear system 
\begin{equation}
\mC \boldsymbol{\Psi}\approx \mU,
\end{equation}
where $\boldsymbol{\Psi}(j,i) \coloneqq \psi_{j}(\boldsymbol{\xi}_{i} )$. If the resulting regression problem is over-determined, with $N>P$,  then we employ least squares approximation whereas if it is under-determined, with $N<P$, then we apply compressed sensing \cite{doostan2011non, peng2014weighted, hampton2015compressive, diaz2018sparse}. The analysis of this work is based on the least squares optimization problem, 
\begin{equation}
\underset{\mC}{\min} \;\; \Vert  \mC\boldsymbol{\Psi}  - \mU \Vert_F,
\label{eq:LS}
\end{equation}
whose solution $\mC$ may be computed from the normal equation $  \mC\boldsymbol{\Psi} \boldsymbol{\Psi}^T =  \mU\boldsymbol{\Psi}^T $. In (\ref{eq:LS}), $\Vert \cdot \Vert_F$ denotes the Frobenius norm. This technique essentially assumes a fixed basis which means modeling a QoI in high dimensions $d$ and/or with high polynomial order $p$ leads to a correspondingly high number of PC basis functions $P$ and to potentially expensive simulations that require many HF samples.

Sparse PC expansions, e.g., via compressed sensing, tackle this issue through exploiting sparsity in the PC coefficients $\vc_{j}$ \cite{donoho2006compressed, candes2008introduction, doostan2011non, blatman2010adaptive, blatman2011adaptive, mathelin2012compressed, yan2012stochastic, yang2013reweighted, peng2014weighted, hampton2015compressive}, and require relatively smaller sample sizes. {\color{black}For instance, solving the $\ell_{1,2}$-minimization problem 
\begin{equation}
\underset{\mC}{\min} \;\; \Vert \mC \Vert_{1,2} \quad \text{subject to} \quad \Vert \mC\boldsymbol{\Psi}   - \mU \Vert_F \leq \kappa,
    \label{eq:ell_1}
\end{equation}
where $\Vert\mC \Vert_{1,2}:=\left(\sum_{i}\Vert\bm{C}(i,:)\Vert_1^2\right)^{1/2}$, $\bm{C}(i,:)$ denotes} the $i$th row of $\bm C$, $\Vert\cdot\Vert_1$ is the $\ell_1$-norm of a vector, and $\kappa$ is a tolerance of solution inaccuracy \cite{donoho2006compressed, candes2008introduction, hampton2015compressive}.
In the present work, we seek to further reduce the sampling requirement through an SMR framework that leverages LF model evaluations. The central theme of this work is two fold. Firstly, if one knew a stochastic reduced basis $\{\eta_j(\bm\Xi)\}_{j=1}^r$, with $r\ll P$, associated with the subspace on which the QoI lives then a regression problem of the form (\ref{eq:LS}) would require a relatively smaller number $N$ of QoI realizations, thus leading to a reduced computational expense. Secondly, such a reduced basis may be identified -- in an approximate sense -- from LF models of the problem.

In practice, for a given physical system, models of differing fidelity are available. HF models, that accurately describe the underlying physics, may require significant computational expense that becomes infeasible when many evaluations are necessary. In contrast, LF models are likely to provide a less accurate prediction of the problem physics at a comparatively affordable computational cost. For instance, in simulations of transient fluid flows that feature complex geometries, HF models must have fine spatial and temporal discretizations to resolve sharp gradient regions, such as boundary layers or separated flow. A LF counterpart with comparatively coarse discretizations will provide less accurate predictions that are generated more quickly. 
Multi-fidelity techniques exploit this inherent variety in model fidelity to reduce the computational expense involved in engineering design \cite{kennedy2000predicting, fernandez2016review, peherstorfer2018survey}. Multi-fidelity methods have seen statistical applications such as co-kriging \cite{forrester2007multi, laurenceau2008building} and numerous recursive approaches designed to improve accuracy and mitigate computational complexity \cite{kleiber2013parameter, le2014recursive, le2015cokriging,perdikaris2015multi, perdikaris2016multifidelity,parussini2017multi}, among others. In recent years  progress in multi-fidelity UQ has been made in areas such as multi-level Monte Carlo \cite{giles2013multilevel,giles2008multilevel,cliffe2011multilevel} and PC expansions \cite{eldred2009recent, ng2012multifidelity, palar2015decomposition, padron2016multi}, the subject of this paper. These approaches typically employ a large number of LF samples and a comparatively small number of HF samples to perform an additive and/or multiplicative correction to the LF model. 

\subsection{Contributions of this work} 
\label{sec:contributions_pc}

Previous SMR work that inspires the present study constructs a small polynomial representation of the HF solution though determining a reduced basis with the Karhunen Lo\'eve (KL) expansion of the LF solution \cite{doostan2007stochastic,ghanem2007efficient}. This method then estimates the HF solution via Galerkin projection of the governing equations onto the span of the reduced polynomial basis. A non-intrusive approach that determines PC coefficients via regression as opposed to Galerkin or Petrov-Galerkin projection is  proposed in \cite{raisee2015non}. The key assumption to these methods is that the QoI admits a low-rank covariance, hence the existence of a reduced basis, and that the decay of the covariance eigen-values in the LF and HF models are similar.

At the core of our contribution is the error analysis of approximations using the BF reduced basis. The analysis leverages bounds for PC sampling methods and a related BF approach in which a reduced basis and corresponding interpolation rule are identified from LF data \cite{hampton2015coherence, hampton2018practical}. The derived error estimates apply to the BF approaches of \cite{doostan2007stochastic, ghanem2007efficient, raisee2015non} and provide practitioners with a means to assess the quality of a LF model in leading to accurate BF estimates. One of the error estimates can be generated using a small number of HF samples and empirically leads to sharp estimates of the true BF error.

In Sections \ref{sec:stochastic_model_reduction} and \ref{sec:KL}, we introduce the key components that make up {\color{black}SMR}. Next, in Section \ref{sec:error_bound_analysis}, we derive a theoretical error bound that provides insight into the features that influence the error, and, in \ref{sec:pract_enhance}, practical enhancement for user implementation. Sections \ref{sec:case_1_ldc}, \ref{sec:case_2_cylinder}{\color{black},} and \ref{sec:case_3_airfoil} demonstrate the effectiveness of the {\color{black}SMR} method and utility of the error bound in three numerical examples, namely a lid-driven cavity flow, heated flow past a cylinder and flow past a NACA 4412 airfoil. Finally, Section \ref{sec:conclusion_pc} gives a brief summary of this study's conclusions. 

\section{Method detail} 
\label{sec:method_detail_pc}

We assume vector valued QoIs $\vu$ that are defined over a spatial domain of the problem and exhibit a low-dimensional subspace on which the solution to (\ref{eq:LS}) lies. While the QoIs may represent an entire domain, such as an airfoil immersed in a velocity field, we are typically interested in more refined QoI selection, for instance the coefficient of pressure along the airfoil surface. 
We denote the LF and the corresponding HF QoIs  as  $\vu^L \coloneqq \{ u^L_1, \dots, u^L_m \} \in \mathbb{R}^m$ and $\vu^H \coloneqq \{ u^H_1, \dots, u^H_M \} \in \mathbb{R}^M$, respectively, and use this convention to indicate the associated fidelity  of variables throughout this manuscript. Although these vectors may be of different lengths, they are functions of the same stochastic inputs $\boldsymbol{\Xi}$.

We assemble $N$ LF and HF realizations of the QoIs into matrices $\mL \in \mathbb{R}^{m \times N}$ and $\mH \in \mathbb{R}^{M \times N} $, which we refer to as LF and HF data, respectively, such that 
\begin{equation}
    \mL \coloneqq \left[ \begin{array}{cccc} \vu_{1}^L & \vu_{2}^L & \dots & \vu_{N}^L \end{array} \right], \quad \mH \coloneqq \left[ \begin{array}{cccc} \vu_{1}^H & \vu_{2}^H & \dots & \vu_{N}^H \end{array} \right]. 
    \nonumber
\end{equation}
It is noted that $\mH$ and $\mL$ have the same number of columns but may differ in their number of rows as LF and HF models frequently have different spatial resolution. 

The method detail is comprised of two main parts. We first describe the SMR approach on which this study is based. Second, we derive the error bound for this method.

\subsection{Stochastic model reduction (SMR)}
\label{sec:stochastic_model_reduction}
Following \cite{doostan2007stochastic, ghanem2007efficient, raisee2015non}, we seek to establish a low-dimensional subspace on which the solution (\ref{eq:LS}) of the QoI lives. If we consider the full $P$ term polynomial basis, $\{\psi_{j}(\boldsymbol{\Xi})\}_{j=1}^P$ from the PC expansion in (\ref{eq:pce_trunc}), as representing the entire solution space, then our objective is to determine a reduced basis $\{\eta_{i}(\boldsymbol{\Xi})\}_{i=1}^r$ comprised of $r \ll P$ terms as illustrated in Figure \ref{fig:Reduced_basis}. 
{\color{black}This reduced basis can be identified via HF samples, but the evaluation of numerous HF simulations is prohibitively expensive. Instead, we use LF samples to identify the reduced basis and derive error estimates that provide insight into conditions a pair of LF and HF models must satisfy to lead to accurate approximation in the identified reduced basis.}

\begin{figure}[ht!]
\centering
\includegraphics[width=0.4\textwidth]{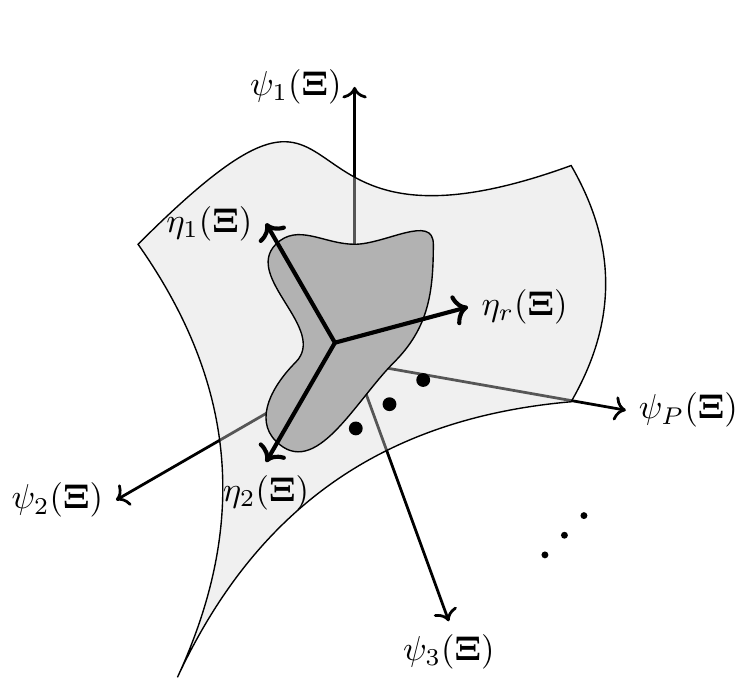}
\caption{The full basis $\{ \psi_j (\boldsymbol{\Xi} )\}^P_{j=1}$ consists of $P$ terms. We seek a reduced basis $\{ \eta_i (\boldsymbol{\Xi} )\}^r_{i=1}$ with $r \ll P$ terms that describes a low-dimensional manifold of the complete solution space. Performing a regression in this basis requires a relatively smaller set of HF samples.}
\label{fig:Reduced_basis}	
\end{figure}

The process to achieve a BF estimate has three key steps. First, we perform a PC expansion of the LF QoI. Second, a reduced basis is obtained with a KL expansion of the LF QoI, that is equated to the LF PC representation. Finally, the reduced basis is utilized with a limited number of HF samples and the BF coefficients are found via least squares regression. We may employ the BF coefficients and basis to construct an estimate of many QoI samples, or retrieve useful statistics directly from the coefficients. We next provide the details of the aforementioned steps of the BF framework.

\subsubsection{PC expansion of LF QoI}
\label{sec:PC_low_data}

We construct a PC expansion of the LF QoI as 
\begin{equation}
\vu^L (\bm{\Xi}) \approx  \sum\limits_{j=1}^{P} {\vc}_{j}^L  \psi_{j} (\bm\Xi), 
\label{eq:pce_low}
\end{equation}
where $\psi_{j} (\bm\Xi)$ are the polynomial basis functions and ${\vc}_{j}^L$ are the corresponding estimated LF coefficients. We solve for the LF coefficients with compressed sensing via $\ell_{1,2}$-minimization following equation (\ref{eq:ell_1}). 
\subsubsection{KL expansion of LF QoI}
\label{sec:KL}
We perform a KL expansion of the LF QoI to identify the dominant stochastic subspace associated with this QoI. The KL expansion is a spectral decomposition of a stochastic process that represents a QoI in terms of the eigenvalues and eigenvectors of its covariance function. We express the (discrete) KL expansion of $\vu^L$ as
 \begin{equation}
\vu^L (\boldsymbol{\Xi} ) \approx \mathbb{E}[\vu^L] + \sum_{i=1}^{r-1} \sqrt{\lambda_{i}^L} \boldsymbol{\varphi}_{i}^L \eta_{i}^L (\boldsymbol{\Xi} ), 
\label{eq:KL_trunc}
\end{equation}
where $\lambda^L_{i}$ and $\boldsymbol{\varphi}^L_{i}$ are eigenvalues and eigenvectors of the covariance matrix of $\vu^L$, approximated by $\sum_{j=2}^P {\vc}_{j}^L ({\vc}_{j}^L)^T$, and $\eta^L_{i}(\bm{\Xi})$ are zero mean orthogonal random variable with unit variance.  
Critical to this method is the assumption that the covariance matrix of the LF QoI has fast decaying eigenvalues, i.e., is low-rank, and that the decay rate is similar to its HF counterpart. Here, $r$ can be found as the minimum integer such that $\sum_{i=1}^{r-1}\lambda_{i}^L/\sum_i \lambda_{i}^L$ is sufficiently close to one. Equating the KL expansion of (\ref{eq:KL_trunc}) and the PC expansions of (\ref{eq:pce_low}) yields %
\begin{equation}
\sum\limits_{j=1}^{P} {\vc}_{j}^L  \psi_{j} (\bm\Xi) \approx \mathbb{E}[ \vu^L ] + \sum_{i=1}^{r-1} \sqrt{\lambda_{i}^L} \boldsymbol{\varphi}_{i}^L \eta_{i}^L (\boldsymbol{\Xi}), 
\nonumber
\end{equation}
where $\mathbb{E}[ \vu^L ] = \vc^L_1$ can be calculated readily from the LF coefficients. 
We solve for the reduced basis $\{\eta_{i}^L(\bm\Xi) \}_{i=1}^{r-1}$ using
\begin{equation}
\eta_{i}^L(\boldsymbol{\Xi} ) \approx \sum_{j=2}^{P} \omega_{ij} \psi_{j}(\boldsymbol{\Xi}),
\label{eq:reduced_basis}
\end{equation}
where $\omega_{ij} = \frac{\langle \boldsymbol{\varphi}_{i}^L , \vc_{j}^L \rangle}{\sqrt{\lambda_{i}^L}}$, with $ \langle \cdot, \cdot \rangle$ denoting the inner product. To account for the mean value of our QoI we prepend our reduced basis with $1$ so that we arrive at a reduced basis $\{ \eta_j^L (\boldsymbol{\Xi} )\}^r_{j=1}:=\{1, \eta_i^L (\boldsymbol{\Xi} )\}^{r-1}_{i=1}$ composed of $r \ll P$ basis functions. In the remainder of this work we omit the superscript $L$ from the reduced basis function for ease of notation, proceeding with $\eta_{j}(\boldsymbol{\Xi} ) := \eta_{j}^L(\boldsymbol{\Xi} )$.

\subsubsection{BF approximation via LF reduced basis lifting}
\label{sec:Bi_fidelity_approximation}

Next, we lift the LF reduced basis $\{ \eta_j (\boldsymbol{\Xi} )\}^r_{j=1}$ and approximate the HF QoI as
\begin{align}
\label{eq:bi_fid}
\vu^H(\boldsymbol{\Xi}) & = \sum_{j=1}^r \vc_{j}^B \eta_{j} (\boldsymbol{\Xi}) + \bm{\delta}_{r}(\bm\Xi),\\ 
& \approx \sum_{j=1}^r \vc_{j}^B \eta_{j} (\boldsymbol{\Xi}): = \vu^{B}(\bm\Xi)
\label{eq:bi_fid2}
\end{align}
i.e., we assume the LF low-dimensional subspace is close to its HF counterpart. Here, $\boldsymbol{\delta}_r(\boldsymbol{\Xi})$ denotes the total error associated with this assumption, the KL expansion truncation, and finite sample approximation of $\vc_{j}^B$. Notice that in (\ref{eq:bi_fid}) and (\ref{eq:bi_fid2}), $\vc_j^B$ denotes estimates of the true BF coefficients given by exact projection or infinite HF samples. In particular, we find $\vc_{j}^B$ in an analogous manner to the least squares regression problem (\ref{eq:LS}), i.e.,
\begin{equation}
\underset{{\mC}^B}{\min} \;\; \Vert  {\mC}^B\boldsymbol{\eta}_n  - \mH_n \Vert_F,
\label{eq:LS_high}
\end{equation}
where $\mH_{n}\in\mathbb{R}^{M\times n}$ with $\mH_{n}(:,s)=\mH(:,j_s)$, $s=1,\dots,n$, denotes $n$ columns of $\mH$ that make up the randomly selected HF samples used for the BF estimate, and the reduced basis measurement matrix $\boldsymbol{\eta}_n \in \mathbb{R}^{r \times n}$ is such that $\boldsymbol{\eta}_n(j,i) \coloneqq \eta_{j}(\boldsymbol{\xi}_{i} )$. 
For ease of notation, we continue to denote the full $N$ sample HF matrix as $\mH$. We note that, following \cite{hampton2015coherence}, the regression problem (\ref{eq:LS_high}) requires $n\sim r\log(r)$ HF samples, which is considerably smaller than what is needed in (\ref{eq:LS}) when $r \ll P$. At this stage, useful statistics such as the expected value and variance of the QoI can be easily retrieved from the coefficients. 

Finally, we estimate the complete HF data $\mH$ through evaluating the BF basis at $N$ realizations and multiplying by the BF coefficients as
\begin{equation}
\widehat{\mH} = {\mC}^B \boldsymbol{\eta}_N,  
\label{eq:bi_approx}
\end{equation}
where $\widehat{\mH} \in \mathbb{R}^{M \times N}$ denotes an estimate of $\mH$ constructed from $n$ HF samples and $\boldsymbol{\eta}_N \in \mathbb{R}^{r \times N}$ is the BF basis evaluated for $N$ samples. The main steps of this SMR method are presented in Algorithm \ref{alg:sto_mod_red}.
\begin{algorithm}
\caption{Stochastic model reduction (SMR).}
\label{alg:sto_mod_red}
\begin{algorithmic}[1]
\item Construct the PC expansion (\ref{eq:pce_low}) from $N$ LF samples. 
\item Determine $r$, the size of reduced basis, from the eigenvalue decay of the covariance of LF QoI given by its PC approximation. 
\item Solve (\ref{eq:reduced_basis}) for the reduced basis  $\{ \eta_j^L (\boldsymbol{\Xi} )\}^r_{j=1}$ determined as described in Section \ref{sec:KL}.  
\item Solve (\ref{eq:LS_high}), using $n \sim r\log(r)$ HF samples, for the BF coefficients ${\mC}^B$. 
\item Estimate BF statistics readily from the BF coefficients, or follow (\ref{eq:bi_approx}) to construct a BF estimate of HF data $\mH$. 
\end{algorithmic}
\end{algorithm}
\subsection{Error estimates}
\label{sec:error_bound_analysis}

{\color{black}Having computed the BF solution $\vu^{B}(\bm\Xi)$, following Algorithm \ref{alg:sto_mod_red}, our goal is to derive estimates of of the errors ($\vu^H - \vu^B$) in a norm that we shall specify. In doing so, we provide an estimate of point-wise (in space) errors ($u_i^H - u_i^B$), $i=1,\dots, M$, where $u_i^H$ and $u_i^B$ are the $i$th entries of $\vu_i^H$ and $\vu_i^B$, respectively. Our approach builds upon two previous theorems, one related to the interpolation-based, reduced basis approach of \cite{narayan2014stochastic,hampton2018practical}, also employed in \cite{doostan2016bi, fairbanks2017low, skinner2019reduced,fairbanks2020bi}, and the other related to the PC sampling error developed in \cite{hampton2015coherence}}.

In the BF method of \cite{narayan2014stochastic,hampton2018practical}, we aim to learn an interpolation rule from $\mL$ that we can apply to $\mH$. To determine the interpolation rule we construct a rank $r \ll N$ matrix interpolative decomposition (MID) \cite{gu1996efficient, cheng2005compression, martinsson2011randomized} via column pivoted QR factorization as
\begin{align}
    \mL \mP & \approx \mQ \left[ \begin{array}{cc} \mR_{11} &\;  \mR_{12} \end{array} \right], \nonumber\\
    & = \mQ \mR_{11} \left[ \begin{array}{cc} \mI &\; \mR_{11}^\dagger \mR_{12} \end{array} \right], \nonumber
\end{align}
where $\mI$ is the $r\times r$ identity matrix,  $\mP \in \mathbb{R}^{N \times N}$ is a permutation matrix, $\mQ \in \mathbb{R}^{m \times r}$ has $r$ orthonormal columns, $\mR_{11} \in \mathbb{R}^{r \times r}$ is an upper triangular matrix, $\mR_{12} \in \mathbb{R}^{r \times (N - r)}$, and $\dagger$ denotes the pseudoinverse. Evidently, $\mQ \mR_{11}$ is equal to the left $r$ columns of $\mL \mP$, denoted by $\mL_r$ the column skeleton of $\mL$. 
The column indices of $\mL_r$, $i_s, \: s = 1, \dots, r$, correspond to random variable input samples $\{\boldsymbol{\xi_{i_s}} \}_{s=1}^r$ and LF QoI samples $\{\vu^L(\boldsymbol{\xi_{i_s}})\}_{s=1}^r$. To finalize the low-rank representation, we set $\bar{\mC}^L: = \left[ \begin{array}{cc} \mI & \;\mR_{11}^\dagger \mR_{12} \end{array} \right] \mP$ to be the interpolation coefficients and construct an estimate of the LF data as
\begin{equation}
\bar{\mL} \coloneqq \mL_r \bar{\mC}^L, 
\label{eq:L_est}
\end{equation}
which first selects $r$ columns of $\mL$, and second, interpolates these columns with the coefficients $\bar{\mC}^L$ to approximate $\mL \approx \bar{\mL}$ as a linear combination of $r$ basis vectors. 

The BF approximation is produced from sampling the HF model for the input samples $\{\boldsymbol{\xi_{i_s}} \}_{s=1}^r$ identified by the LF reduced basis, creating the HF column skeleton $\mH_r$. We note that in the SMR in Section \ref{sec:stochastic_model_reduction}, $H_n$ is comprised of $n$ random samples, while here with MID BF the $r$ samples are identified via the LF data. 
A BF approximation, $\bar{\mH}$, is determined as
\begin{equation}
\bar{\mH} \coloneqq \mH_r \bar{\mC}^L, 
\label{eq:MID_H}
\end{equation}
where $\mH_r$ is a set of $r$ columns of $\mH$, and $\bar{\mC}^L$ are the coefficients found from the LF data in equation (\ref{eq:L_est}). In this manner we have arrived at the MID BF estimate, $\bar{\mH}$, with a total of $N$ LF samples and $r$ HF samples. 
Next, we introduce Theorem~\ref{thm:bifi} from \cite{hampton2018practical}.
\begin{theorem}
\label{thm:bifi} (Theorem~1 of \cite{hampton2018practical})
For any $\tau \ge 0$, let 
\begin{align}
\epsilon(\tau) = \| \mH^T\mH - \tau \mL^T\mL \|_2.
\label{eq:epsdef}
\end{align}
Let $\bar{\mH}$ and $\bar{\mL}$ be corresponding static coefficient BF estimates of rank $r$ with coefficients $\bar{\mC}^L$, and let $\sigma_k$ denote the $k$th largest singular value of $\mL$. Then,
\begin{align}
 \|\mH - \bar{\mH} \|_2 & \le \rho_k(\tau);
 \nonumber
\end{align}
where $\rho_k(\tau)$ is defined by
\begin{align}
\rho_k(\tau) \coloneqq& \mathop{\min}\limits_{\tau,k \le \mbox{rank}(\mL)} (1+\|\bar{\mC}^L\|_2) \sqrt{\tau \sigma_{k+1}^2 + \epsilon(\tau)}\nonumber \\
& + \|\mL-\bar{\mL}\|_2 \sqrt{\tau + \epsilon(\tau)\sigma_k^{-2}}.
\label{eq:thm1:rho}
\end{align}
When $k = rank(\mL)$, we set $\sigma_{k+1}=0$.
\end{theorem}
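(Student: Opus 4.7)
The plan is to bound $\|\mH - \bar{\mH}\|_2$ by decomposing the residual along the top-$k$ right singular subspace of $\mL$, controlling one component through $\|\mL - \bar{\mL}\|_2$ and the other through the tail singular value $\sigma_{k+1}$, and in each case transferring the bound from the LF to the HF side via the spectral comparison $\epsilon(\tau)$.

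First I would rewrite both residuals in a common form. Let $\mS \in \mathbb{R}^{N \times r}$ be the column selector so that $\mL_r = \mL \mS$ and $\mH_r = \mH \mS$, set $\mW := \bar{\mC}^L$, and put $\mD := \mI - \mS \mW$. Then $\mH - \bar{\mH} = \mH \mD$ and $\mL - \bar{\mL} = \mL \mD$. Because $\mH^T\mH - \tau \mL^T\mL$ is symmetric with operator norm at most $\epsilon(\tau)$, for any vector $\vv$ one has the transfer inequality $\|\mH \vv\|_2^2 \leq \tau \|\mL \vv\|_2^2 + \epsilon(\tau) \|\vv\|_2^2$, which promotes spectral control of $\mL \mD$ into spectral control of $\mH \mD$ at the cost of an additive $\epsilon(\tau) \|\mD\|_2^2$.

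Next I would split via the orthogonal projector $\mP_k$ onto the first $k$ right singular vectors of $\mL$, writing $\mH \mD = \mH \mP_k \mD + \mH (\mI - \mP_k) \mD$. Applying the transfer inequality on each piece and taking operator norms gives
\begin{equation*}
\|\mH \mP_k \mD\|_2 \leq \sqrt{\tau \|\mL \mP_k \mD\|_2^2 + \epsilon(\tau) \|\mP_k \mD\|_2^2},
\end{equation*}
and analogously for the complement. For the $\mP_k$ piece I would use that $\mL \mP_k = \mL_{(k)}$ is the best rank-$k$ truncation and that $\mL \mD = \mL_{(k)} \mD + (\mL - \mL_{(k)}) \mD$ is an orthogonal decomposition in the left singular basis, so $\|\mL \mP_k \mD\|_2 \leq \|\mL \mD\|_2 = \|\mL - \bar{\mL}\|_2$, while $\|\mL \vv\|_2 \geq \sigma_k \|\vv\|_2$ for $\vv \in \ran{\mP_k}$ yields $\|\mP_k \mD\|_2 \leq \sigma_k^{-1} \|\mL - \bar{\mL}\|_2$. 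For the complementary piece, $\mL (\mI - \mP_k)$ has operator norm $\sigma_{k+1}$, and $\|(\mI - \mP_k) \mD\|_2 \leq \|\mD\|_2 \leq 1 + \|\mS\|_2 \|\mW\|_2 = 1 + \|\bar{\mC}^L\|_2$, using $\|\mS\|_2 = 1$ because $\mS$ is a column-selector from the identity.

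Collecting the two pieces via the triangle inequality gives exactly the unminimized form of $\rho_k(\tau)$; minimizing over admissible $\tau \geq 0$ and $k \leq \mathrm{rank}(\mL)$, with $\sigma_{k+1} = 0$ at $k = \mathrm{rank}(\mL)$, completes the proof. The main obstacle is the splitting choice itself: a direct one-shot transfer yields $\|\mH \mD\|_2 \leq \sqrt{\tau \|\mL - \bar{\mL}\|_2^2 + \epsilon(\tau)(1 + \|\bar{\mC}^L\|_2)^2}$, which couples the coefficient-norm penalty to the full approximation error and is strictly weaker than $\rho_k$ for $k < \mathrm{rank}(\mL)$. Introducing $\mP_k$ is what separates the two error sources, and verifying the norm bound $\|\mL \mP_k \mD\|_2 \leq \|\mL \mD\|_2$ through the orthogonality of $\ran{\mL_{(k)}}$ and $\ran{\mL - \mL_{(k)}}$ is the one non-obvious ingredient.
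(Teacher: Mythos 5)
Your proposal is correct, and it reconstructs essentially the argument behind this result: the paper itself imports Theorem~\ref{thm:bifi} from \cite{hampton2018practical} without reproducing its proof, and the proof given there rests on exactly the ingredients you identify --- the common factorization $\mH-\bar{\mH}=\mH\mD$, $\mL-\bar{\mL}=\mL\mD$, the Gramian transfer inequality $\|\mH\vv\|_2^2\le\tau\|\mL\vv\|_2^2+\epsilon(\tau)\|\vv\|_2^2$, and the split through the rank-$k$ right singular projector of $\mL$, which is what produces the two terms of $\rho_k(\tau)$. No gaps; your handling of the one delicate step, $\|\mL\mP_k\mD\|_2\le\|\mL\mD\|_2$ via orthogonality of the ranges of $\mL\mP_k$ and $\mL(\mI-\mP_k)$, is sound.
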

\begin{remark}
We note that this theorem is better suited for the MID \cite{hampton2018practical} than for the SMR BF algorithm of this work. However, this bound is relevant in this case as well. Of practical relevance, this bound is identified using predominantly LF samples, with a limited number of HF samples to estimate (\ref{eq:epsdef}).
\end{remark}

The second theorem that we build upon concerns the number of samples needed to accurately recover coefficients $\mC^B$ from (\ref{eq:LS_high}). From \cite{hampton2015compressive,hampton2015coherence}, we recall the coherence parameter $\mu$ of  that provides a bound on the realized spectral radius of $\boldsymbol{\eta}$ as
\begin{equation}
\mu \coloneqq \sup_{\boldsymbol{\xi} \in \Omega} \sum_{j=1}^r \vert \eta_{j} (\boldsymbol{\xi}) \vert^2. 
\label{eq:coherence}
\end{equation}
We note that under coherence optimal conditions \cite{hampton2015coherence} $\mu = r$, and such conditions can be guaranteed by importance sampling. 

We now present Theorem~\ref{thm:ez_pce} that uses this coherence parameter to bound the number of samples, $n$, necessary to recover coefficients ${\mC}^B$. In Theorem~\ref{thm:ez_pce}, we revisit the truncation error of equation (\ref{eq:bi_fid2}), denoted for a given spatial point as $\delta_{r,i}(\boldsymbol{\Xi})$.
\begin{theorem}
\label{thm:ez_pce} (Theorem~2.1 of \cite{hampton2015coherence})
Let
\begin{align}
{u}_{i}^B(\bm{\Xi}) = \mathop{\sum}\limits_{j=1}^r {\mC}^B (i,j)\eta_{j}(\bm{\Xi}),
\nonumber
\end{align}
where ${\mC}^B$ is the least squares solution to (\ref{eq:LS_high}). It follows that for $\mathcal{E}$, which is independent of $i$, and is a sampling event that occurs with probability
\begin{align}
\label{eq:rare_prob}
\mathbb{P}(\mathcal{E}) \ge 1-2r \exp(-0.1 n \mu^{-1}),
\end{align}
that
\begin{align}
\label{eq:small_nu_bound}
 \nu_{i} &\coloneqq \mathbb{E}\left(\|u_{i}^H(\bm{\Xi})-u_{i}^B(\bm{\Xi})\|^2_{L_2(\Omega,f)};\mathcal{E}\right)\nonumber \\
 &\le \left(1+\frac{4\mu}{n}\right)\mathbb{E}(\delta^{2}_{r,i}(\bm{\Xi})),
\end{align}
where $\mu$ is as in (\ref{eq:coherence}), and
\begin{align}
\mathbb{E}\left(X;\mathcal{E}\right) = \int_{\mathcal{E}}X(\bm{\xi})f(\bm{\xi})d\bm{\xi} = \mathbb{E}(X|\mathcal{E})\mathbb{P}(\mathcal{E})
\nonumber
\end{align}
denotes the expectation restricted to the event (also known as restricted expectation), and is closely related to conditional expectation.
\end{theorem}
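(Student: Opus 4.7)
The plan is to build $\mathcal{E}$ as a standard well-conditioning event for the empirical Gram matrix of the reduced basis, then decompose the regression error into a bias piece (the best $r$-term projection error) and a variance piece (the deviation of the sample-based least-squares solution from the exact projection) which is controlled on $\mathcal{E}$.

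First I would introduce the normalized design matrix $\bm{A} \coloneqq n^{-1/2}\boldsymbol{\eta}_n^T \in \mathbb{R}^{n\times r}$ and its Gram matrix $\mG \coloneqq \bm A^T \bm A = n^{-1}\boldsymbol{\eta}_n\boldsymbol{\eta}_n^T$. Because the reduced basis $\{\eta_j\}$ is orthonormal with respect to $f$, $\mathbb{E}[\mG]=\mI$. Taking $\mathcal{E}$ to be the event $\{\|\mG-\mI\|_2\le 1/2\}$, the non-commutative matrix Chernoff inequality applied to the sum of rank-one matrices $n^{-1}\eta(\bm\xi_k)\eta(\bm\xi_k)^T$, each of which has spectral norm bounded by $\mu/n$ via the coherence definition (\ref{eq:coherence}), yields
\begin{align*}
\mathbb{P}(\mathcal{E}^c) \le 2r\exp(-0.1\,n\mu^{-1}),
\end{align*}
which is exactly (\ref{eq:rare_prob}). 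Crucially $\mathcal{E}$ depends only on the inputs $\{\bm\xi_k\}$ and the basis $\{\eta_j\}$, so it is independent of the spatial index $i$.

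Next I would introduce the exact $L_2(\Omega,f)$ projection of $u_i^H$ onto $\mathrm{span}\{\eta_1,\dots,\eta_r\}$, with coefficients $\vc_i^{B,*}$, so that $\delta_{r,i}(\bm\Xi) = u_i^H(\bm\Xi) - \sum_{j=1}^r \vc_i^{B,*}(j)\eta_j(\bm\Xi)$ is orthogonal to every $\eta_j$. By orthonormality of $\{\eta_j\}$ and Pythagoras,
\begin{align*}
\|u_i^H-u_i^B\|^2_{L_2(\Omega,f)} = \mathbb{E}[\delta_{r,i}^2(\bm\Xi)] + \|\mC^B(i,:)-\vc_i^{B,*}\|_2^2.
\end{align*}
Writing out the normal equations $\mG\,\mC^B(i,:)^T = n^{-1}\boldsymbol{\eta}_n \vu_i$ with $\vu_i\coloneqq(u_i^H(\bm\xi_1),\dots,u_i^H(\bm\xi_n))^T$, and substituting $\vu_i = \boldsymbol{\eta}_n^T \vc_i^{B,*} + \bm\delta_i$ with $\bm\delta_i\coloneqq(\delta_{r,i}(\bm\xi_1),\dots,\delta_{r,i}(\bm\xi_n))^T$, gives the residual identity
\begin{align*}
\mG\bigl(\mC^B(i,:)^T-\vc_i^{B,*}\bigr) = n^{-1}\boldsymbol{\eta}_n\bm\delta_i.
\end{align*}
On $\mathcal{E}$, $\|\mG^{-1}\|_2\le 2$, so the coefficient deviation obeys $\|\mC^B(i,:)-\vc_i^{B,*}\|_2^2 \le 4\,n^{-2}\|\boldsymbol{\eta}_n \bm\delta_i\|_2^2$.

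To finish, I would evaluate the unrestricted expectation of $n^{-2}\|\boldsymbol{\eta}_n\bm\delta_i\|_2^2$. Because $\mathbb{E}[\eta_j(\bm\Xi)\delta_{r,i}(\bm\Xi)]=0$ by construction, cross-sample terms vanish and
\begin{align*}
\mathbb{E}\!\left[\tfrac{1}{n^2}\|\boldsymbol{\eta}_n\bm\delta_i\|_2^2\right]
= \tfrac{1}{n}\,\mathbb{E}\!\left[\Bigl(\sum_{j=1}^r \eta_j^2(\bm\Xi)\Bigr)\delta_{r,i}^2(\bm\Xi)\right]
\le \tfrac{\mu}{n}\,\mathbb{E}[\delta_{r,i}^2(\bm\Xi)],
\end{align*}
using the coherence bound (\ref{eq:coherence}). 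Combining, the restricted expectation satisfies
\begin{align*}
\nu_i \le \mathbb{E}[\delta_{r,i}^2(\bm\Xi)]\,\mathbb{P}(\mathcal{E}) + 4\,\mathbb{E}\!\left[\tfrac{1}{n^2}\|\boldsymbol{\eta}_n\bm\delta_i\|_2^2;\mathcal{E}\right] \le \Bigl(1+\tfrac{4\mu}{n}\Bigr)\mathbb{E}[\delta_{r,i}^2(\bm\Xi)],
\end{align*}
which is (\ref{eq:small_nu_bound}). The main obstacle is the matrix Chernoff step: one must be careful that the constants in the tail bound line up with the $0.1$ prefactor in (\ref{eq:rare_prob}), which requires the standard choice of the $1/2$ deviation threshold for $\mG$ together with the tight per-summand bound $\mu/n$ provided by the coherence definition; everything else is bookkeeping around the orthogonality of the best projection error.
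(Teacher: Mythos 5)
The paper states this theorem without proof, importing it verbatim from \cite{hampton2015coherence}; your argument correctly reconstructs the standard proof given there and in the least-squares stability literature it builds on. The decomposition into the best-approximation error plus a coefficient-deviation term, the matrix-Chernoff event $\{\|\mG-\mI\|_2\le 1/2\}$ with per-summand bound $\mu/n$ yielding the $0.1$ constant, and the use of the \emph{restricted} expectation to replace $\mathbb{E}[\,\cdot\,;\mathcal{E}]$ of the deviation term by its unrestricted mean $4\mu n^{-1}\mathbb{E}(\delta^2_{r,i})$ are exactly the ingredients behind the stated constants, so I see no gap.
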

\begin{remark}
 We note that this error bound applies pointwise in space. The error from the BF approximation can be concentrated in certain spatial regions, and this often applies to the PC approximations for these $M$ points as well. Summing the $\nu_i$ over all points in space allows for some simplifications, as we shall see in Corollary~\ref{cor:cor1} and Corollary~\ref{cor:cor2}.
Additionally, the probability estimate in (\ref{eq:rare_prob}) is a significant underestimate for small sample sizes. 
\end{remark}

We now show that Theorem~\ref{thm:bifi} can be utilized for the SMR case for an \textit{a priori} error consideration, at least in cases where the MID does not fully recover $\mH$. This is assured by a mild assumption on the relationship between $\mL$ and $\mH$.

\begin{corollary}
\label{cor:cor1}
Assume that there does not exist any matrix $\mF\in\mathbb{R}^{M\times m}$ with singular values restricted to $\{0,1\}$ and a constant $q$ such that
\begin{align}
 q\mF\mL &= \mH.
 \label{eq:q_constraint}
\end{align}
Assuming also the conditions of Theorem~\ref{thm:ez_pce}, it follows that,
\begin{align}
 \label{eq:nu_bound_points_tot}
 \nu_{i} &= \left(1+\frac{4\mu}{n}\right)\left(\frac{\zeta_{i}}{N}\rho^2_k(\tau)\right);\\
 \label{eq:nu_bound_tot}
 \mathop{\sum}\limits_{i=1}^{M}\nu_{i} &= \left(1+\frac{4\mu}{n}\right)\left(\frac{R\bar{\zeta}}{N}\rho^2_k(\tau)\right).
\end{align}
Here, $\rho_k(\tau)$ is as in Theorem~\ref{thm:bifi}, $\zeta_{i}$ and $\bar{\zeta}$ are random variables which converge a.s. to finite values $g_{r,i}$ and $g_r$, respectively, $R\le M$ is the rank of $\mH-\bar{\mH}$, and the rest is as in Theorem~\ref{thm:ez_pce}.
\end{corollary}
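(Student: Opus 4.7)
My plan is to chain the two preceding theorems together: Theorem~\ref{thm:ez_pce} already gives $\nu_i \le (1+4\mu/n)\mathbb{E}(\delta_{r,i}^2(\bm\Xi))$, so the entire task reduces to controlling the best $r$-term truncation error $\mathbb{E}(\delta_{r,i}^2(\bm\Xi))$ in the reduced basis, and Theorem~\ref{thm:bifi} supplies a rank-$r$ residual bound in terms of $\rho_k(\tau)$. The random quantities $\zeta_i$ and $\bar\zeta$ are introduced as bookkeeping factors that absorb the ratio between the population $L_2(\Omega,f)$ error and its $N$-sample Monte-Carlo analogue.

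Concretely, I would first apply Theorem~\ref{thm:ez_pce} and then identify $\mathbb{E}(\delta_{r,i}^2(\bm\Xi))$ with the population limit of the empirical row residual $\frac{1}{N}\|(\mathbf{H}-\widehat{\mathbf{H}})(i,:)\|^2$, where $\widehat{\mathbf{H}}$ is any rank-$r$ approximation of $\mathbf{H}$ whose row space is accessible from the LF data. I would then use the MID BF approximation $\bar{\mathbf{H}}$ of Theorem~\ref{thm:bifi} as that benchmark, invoking the elementary row/spectral inequality $\|(\mathbf{H}-\bar{\mathbf{H}})(i,:)\|^2=\|e_i^T(\mathbf{H}-\bar{\mathbf{H}})\|^2\le\|\mathbf{H}-\bar{\mathbf{H}}\|_2^2\le\rho_k^2(\tau)$. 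This allows $\zeta_i$ to be defined implicitly by
\begin{equation*}
\mathbb{E}(\delta_{r,i}^2(\bm\Xi))\;=\;\frac{\zeta_i}{N}\rho_k^2(\tau),
\end{equation*}
producing (\ref{eq:nu_bound_points_tot}) directly. For the aggregated estimate (\ref{eq:nu_bound_tot}) I would sum over $i$ and apply $\|\mathbf{H}-\bar{\mathbf{H}}\|_F^2\le R\|\mathbf{H}-\bar{\mathbf{H}}\|_2^2\le R\rho_k^2(\tau)$, where assumption (\ref{eq:q_constraint}) rules out the degenerate case in which MID perfectly recovers $\mathbf{H}$ (so $R\ge 1$ and the rank prefactor is meaningful); defining $\bar\zeta$ via $\sum_i\zeta_i=R\bar\zeta$ closes the argument. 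The almost-sure convergence of $\zeta_i\to g_{r,i}$ and $\bar\zeta\to g_r$ follows from the strong law of large numbers applied to the Monte-Carlo averages in both numerator and denominator, together with the continuous mapping theorem on their ratio.

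The chief obstacle is the step that legitimizes $\bar{\mathbf{H}}$ as a surrogate for the best $\eta$-basis projection of $\mathbf{H}$: the SMR approximation lives in the span of $\{\eta_j(\bm\Xi)\}$, while the MID approximation lies in the column span of the HF skeleton $\mathbf{H}_r$, and these subspaces are not identical. The proof must exploit the fact that both constructions draw on the same LF information---the reduced basis $\{\eta_j\}$ is a KL/SVD factorization of $\mathbf{L}$ and the MID skeleton is a column-pivoted QR factorization of the same $\mathbf{L}$---so that the two rank-$r$ approximations differ only by quantities already controlled by $\epsilon(\tau)$ in Theorem~\ref{thm:bifi}. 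Absorbing this discrepancy into $\zeta_i$ (rather than attempting an exact subspace containment) is the essential technical content of the corollary; the surrounding steps are routine Frobenius-versus-spectral norm manipulations and an SLLN argument.
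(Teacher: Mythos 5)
Your proposal is correct and follows essentially the same route as the paper: Theorem~\ref{thm:ez_pce} supplies the $(1+4\mu/n)$ factor, $\zeta_i$ and $\bar{\zeta}$ are defined implicitly as the ratios $N\,\mathbb{E}(\delta^2_{r,i})/\rho^2_k(\tau)$ (with the discrepancy between the SMR subspace and the MID skeleton absorbed into them, exactly as you anticipate), the sum version uses $\|\mH-\bar{\mH}\|_F^2\le R\|\mH-\bar{\mH}\|_2^2$, and the limiting values follow from the large-$N$, large-$n$ behavior of both sides. The only minor difference is that the paper invokes assumption (\ref{eq:q_constraint}) specifically to guarantee $\rho^2_k(\tau)\neq 0$ --- since $\rho^2_k(\tau)=0$ forces $\mH^T\mH=\tau\mL^T\mL$ and hence a partial isometry $\mF$ with $q=\sqrt{\tau}$ --- so that the implicit definition of $\zeta_i$ is well posed, rather than to ensure $R\ge 1$ as you state.
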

%
%
\begin{proof}
If $\mathbb{E}(\delta^{2}_{r,i}(\bm{\Xi}))=0$, then $\nu_i=0$, and there is nothing to prove; hence, assume this is not the case. Implicitly define $\zeta_{i}$ such that 
\begin{align}
\mathbb{E}(\delta^{2}_{r,i}(\bm{\Xi})) &= \zeta_{i}\frac{\rho^2_k(\tau)}{N}.
\label{eq:zeta}
\end{align}
$\zeta_{i}$ is defined unless $\rho^2_k(\tau)=0$. If this occurs at $\tau=0$, then $\mH$ must be the zero matrix, which is a degenerate case for which the bounds hold; therefore, consider the case that $\tau>0$. In particular, $\rho^2_k(\tau)=0$ implies 
\begin{align*}
\mH^T\mH & = \tau\mL^T\mL. \nonumber
\end{align*}
This implies the existence of a matrix $\mF\in \mathbb{R}^{M\times m}$ whose singular values are all in $\{0,1\}$ satisfying (\ref{eq:q_constraint}) with $q=\sqrt{\tau}$. Hence, assuming that such a pair of $\mF$ and $\tau$ does not exist implies that each $\zeta_{i}>0$ is well defined. The LHS of (\ref{eq:zeta}) depends only on $r$. In contrast, the RHS depends on $N$ and $n$, but as $N\rightarrow\infty$ and $n\rightarrow\infty$, the RHS converges to a non-zero limiting value. This, with Theorem~\ref{thm:ez_pce}, is sufficient to show (\ref{eq:nu_bound_points_tot}).

To show (\ref{eq:nu_bound_tot}), we note that 
from Theorem~\ref{thm:ez_pce}, it follows that for some $\bar{a}$ whose limiting value is in $(0,1]$,
\begin{align}
\mathop{\sum}\limits_{i=1}^M\nu_i & =\bar{a}\left(1+\frac{4\mu}{n}\right)\mathop{\sum}\limits_{i=1}^M\mathbb{E}(\delta^2_{r,i}(\bm{\Xi})).
\label{eq:LHScor1A}
\end{align}
Consider $b>0$ such that
\begin{align}
\mathop{\sum}\limits_{i=1}^M\mathbb{E}(\delta^2_{r,i}(\bm{\Xi})) & = \frac{R b}{N}\rho^2_k(\tau),
\label{eq:LHScor1B}
\end{align}
where $R$ is the rank of $\mH - \bar{\mH}$. Note that the scaling $R/N$ arises from Theorem~\ref{thm:bifi}, and the fact that
\begin{align*}
R\|\mH-\bar{\mH}\|^2_2 \ge \|\mH-\bar{\mH}\|^2_F.
\end{align*}
Under the same conditions as above for (\ref{eq:zeta}), noting that $R\le M$, $b$ is finite and converges to a non-zero limit as $N\rightarrow\infty$. Combining (\ref{eq:LHScor1A}) and (\ref{eq:LHScor1B}),
\begin{align*}
\mathop{\sum}\limits_{i=1}^{M}\nu_i &= \left(1+\frac{4\mu}{n}\right)\frac{R\bar{a}b}{N}\rho^2_k(\tau).
\end{align*}
Setting $\bar{\zeta} = \bar{a} b$ shows (\ref{eq:nu_bound_tot}).
$\blacksquare$
\end{proof}

Observe in the RHS of (\ref{eq:nu_bound_tot}) that the first term is consistent with (\ref{eq:nu_bound_points_tot}) and converges to $1$ as $n \rightarrow \infty$. The second term converges to a constant as $\rho^2_k (\tau)$ grows with $N$. 
An in depth analysis of $\rho^2_k (\tau)$ can be found in \cite{hampton2018practical}. 

{\color{black}
\begin{remark}
Estimating $\zeta_i$ and $\bar{\zeta}$ in Corollary \ref{cor:cor1} requires knowing the true error $\delta_{r}(\bm\Xi)$ which is not available nor it is practically realistic to generate. Therefore, the utility of the results in (\ref{eq:nu_bound_points_tot}) and (\ref{eq:nu_bound_tot}) is to provide insight into the convergence of the SMR approach with respect to the decay of the spectrum of the low-fidelity matrix $\bm{L}$ and the proximity of the low- and high-fidelity Gramian matrices -- measured by $\epsilon(\tau)$ -- through $\rho_k(\tau)$.
\end{remark}
}

{\color{black}
\begin{remark}
The choice of $r$ is of practical importance, as increasing the number of terms in the KL expansion is a natural way to improve the BF estimate. Theorem~\ref{thm:ez_pce} and Corollary~\ref{cor:cor1} show that the number of samples grows as the coherence $\mu$. As $\mu$ is often of order $r$, this suggests $n$ can be of order $r\log(r)$~\cite{cohen2013stability,hampton2015coherence}. Increasing $r$ can only reduce error to a point, as it is the LF model which determines the KL expansion that yields the PC approximation. Theorem~\ref{thm:bifi} and Corollary~\ref{cor:cor1} suggest that under a mild assumption this dependence on the LF model cannot be much worse than the dependence in the MID case, which is seen to be robust with regards to differences between the LF and HF models as shown in \cite{narayan2014stochastic,zhu2014computational,doostan2016bi,hampton2018practical,skinner2019reduced,fairbanks2020bi}.
\end{remark}
}

\subsubsection{Practical bounds via moments}
\label{sec:pract_enhance}

We now present Theorem~\ref{thm:new_pc}, which provides a straightforward estimate of $\nu_i$, useful for practical \textit{a posteriori} error estimation. {\color{black}In our practical estimates the number of HF samples used to compute the bound is denoted by $\hat{n}\ge n$.} We note that the notation $\mH_{{\color{black}\hat{n}}}(i,:)$ refers to the $i$th row of $\mH_{{\color{black}\hat{n}}} \in \mathbb{R}^{M\times {\color{black}\hat{n}}}$.

\begin{theorem}
\label{thm:new_pc}
For $\mathcal{E}$ as in Theorem~\ref{thm:ez_pce}, assumed to be satisfied, it follows that,
\begin{align}
 \nu_{i} :&= \mathbb{E}\left(\|u_{i}^H(\bm{\Xi})-{u}_{i}^B(\bm{\Xi})\|^2_{L_2(\Omega,f)};\mathcal{E}\right) \nonumber\\
 \label{eq:nu_bound_theta_i}
      &= \frac{\theta_{ i}}{{\color{black}\hat{n}}}\|\mH_{{\color{black}\hat{n}}}(i,:)-\widehat{\mH}_{{\color{black}\hat{n}}}(i,:)\|^2_2;\\
 \label{eq:nu_bound_theta}
 \mathop{\sum}\limits_{i=1}^M\nu_i &= \frac{\bar{\theta}}{{\color{black}\hat{n}}}\|\mH_{{\color{black}\hat{n}}}-\widehat{\mH}_{{\color{black}\hat{n}}}\|^2_F.
\end{align}
Here, $\mH_{{\color{black}\hat{n}}} \in \mathbb{R}^{M \times {\color{black}\hat{n}}}$ is the matrix of ${\color{black}\hat{n}}$ HF samples, $\widehat{\mH}_{{\color{black}\hat{n}}}$, which is the BF approximation to $\mH_{{\color{black}\hat{n}}}$, and $\theta_{i}$ and $\bar{\theta}$ are random variables {\color{black}that converge almost surely to $1$ as ${\color{black}\hat{n}} \rightarrow \infty$}.
\end{theorem}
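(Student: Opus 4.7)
The plan is to recognize Theorem~\ref{thm:new_pc} as a Monte Carlo consistency statement in which $\theta_{i}$ and $\bar{\theta}$ are simply ratios forcing the asserted equalities, so the real content is their almost-sure convergence to $1$ as $\hat{n}\to\infty$.

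First I would unpack $\nu_{i}$ under the hypothesis that $\mathcal{E}$ holds. Because $\mathcal{E}$ is a sampling event concerning the $n$ training samples used to form $\mC^B$ (cf.\ Theorem~\ref{thm:ez_pce}), conditioning on it freezes $u_{i}^{B}(\cdot)$ as a deterministic function of $\bm\Xi$. Under this conditioning the restricted expectation collapses to the unconditional $L_2(\Omega,f)$ squared error
\[
\nu_{i} = \mathbb{E}\left[(u_{i}^{H}(\bm\Xi) - u_{i}^{B}(\bm\Xi))^2\right],
\]
where the expectation now ranges over fresh realizations of $\bm\Xi$ drawn from $f$, independent of the training samples.

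Next I would identify the right-hand side of (\ref{eq:nu_bound_theta_i}) as the empirical Monte Carlo estimator of this expectation. For $\hat{n}$ additional i.i.d.\ draws $\{\bm\xi_s\}_{s=1}^{\hat{n}}$ of $\bm\Xi$, by construction
\[
\frac{1}{\hat{n}}\|\mH_{\hat{n}}(i,:) - \widehat{\mH}_{\hat{n}}(i,:)\|_2^2 \; = \; \frac{1}{\hat{n}}\sum_{s=1}^{\hat{n}}\bigl(u_{i}^{H}(\bm\xi_s) - u_{i}^{B}(\bm\xi_s)\bigr)^2.
\]
The summands are i.i.d.\ with finite mean $\nu_{i}$ (the QoI is assumed to have finite variance at the outset of the paper, and $u_{i}^{B}$ lies in a fixed finite-dimensional polynomial subspace), so the strong law of large numbers gives
\[
\frac{1}{\hat{n}}\|\mH_{\hat{n}}(i,:) - \widehat{\mH}_{\hat{n}}(i,:)\|_2^2 \xrightarrow{\mathrm{a.s.}} \nu_{i}.
\]
Defining $\theta_{i} := \hat{n}\,\nu_{i}/\|\mH_{\hat{n}}(i,:) - \widehat{\mH}_{\hat{n}}(i,:)\|_2^2$ turns (\ref{eq:nu_bound_theta_i}) into an identity and forces $\theta_{i}\to 1$ almost surely. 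The degenerate case $\nu_{i}=0$ (or a zero empirical residual) is handled by convention, setting $\theta_{i}=1$.

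For the aggregate bound (\ref{eq:nu_bound_theta}) I would apply the same argument to the scalar random variable $Y := \sum_{i=1}^{M}(u_{i}^{H}(\bm\Xi) - u_{i}^{B}(\bm\Xi))^2$, whose mean is $\sum_{i}\nu_{i}$; its $\hat{n}$-sample Monte Carlo estimator is precisely $\hat{n}^{-1}\|\mH_{\hat{n}} - \widehat{\mH}_{\hat{n}}\|_F^2$, and setting $\bar{\theta}:=\hat{n}\sum_{i}\nu_{i}/\|\mH_{\hat{n}} - \widehat{\mH}_{\hat{n}}\|_F^2$ yields (\ref{eq:nu_bound_theta}) with $\bar{\theta}\to 1$ a.s.\ by the same SLLN. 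I do not anticipate any heavy technical obstacle; the main conceptual care is in the first step, namely cleanly separating the training-sample randomness (on which $\mathcal{E}$ is defined and which is frozen by the hypothesis) from the validation-sample randomness (on which the Monte Carlo estimator operates), so that the quantity being estimated is an ordinary integral against $f$. Once that separation is made, the remainder is a direct SLLN argument.
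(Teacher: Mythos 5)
Your proposal is correct and follows essentially the same route as the paper: the paper likewise treats $\theta_i$ and $\bar\theta$ as multiplicative factors correcting the sample mean to the true mean, observes that under $\mathcal{E}$ the empirical average is an unbiased, consistent estimator of $\nu_i$, and invokes the strong law of large numbers for the almost-sure convergence to $1$. Your write-up is in fact somewhat more careful than the paper's (which is only a few sentences long), particularly in separating the training-sample randomness defining $\mathcal{E}$ from the validation-sample randomness driving the Monte Carlo estimator.
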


\begin{proof}
We note that $\theta_{i}$ and $\bar\theta$ depend on the $N$ HF random samples used to estimate the error; that is, $\theta_{i}$ and $\bar\theta$ are multiplicative factors to correct the sample mean to the true mean. Recall that, $\mH(i,k)$ contains $u_{i}^H(\bm{\xi}_k)$, and that $\widehat{\mH}(i,k)$ contains the corresponding $u^B_{i}(\bm{\xi}_k)$ computed via SMR. 
When the sampling event $\mathcal{E}$ occurs, as assumed in Theorem~\ref{thm:ez_pce}, it follows that, (\ref{eq:nu_bound_theta_i}) is an unbiased, consistent estimate for $\nu_i$. By the strong law of large numbers $\theta_{i}$ converges to $1$ almost surely as $\hat{n} \rightarrow \infty$. The analogous argument shows (\ref{eq:nu_bound_theta}). $\blacksquare$\\
\end{proof}
{\color{black}
 In Corollary~\ref{cor:cor2}, below, we remove the random variables $\theta_{i}$ and $\bar{\theta}$ from (\ref{eq:nu_bound_theta_i}) and (\ref{eq:nu_bound_theta}) by bounding them probabilistically}. To proceed, we define two random variables to which the Berry-Esseen Theorem \cite{berry1941accuracy, shevtsova2011absolute} will be applied. Estimating the moments of these random variables will correspond to bounds of the error from samples for a particular problem.

\begin{align}
\label{eq:V_def}
\mV_{i,j} &= |\mH_{{\color{black}\hat{n}}}(i,j)-\widehat{\mH}_{{\color{black}\hat{n}}}(i,j)|^2; & \mW_{j} &= \mathop{\sum}\limits_{i=1}^M \mV_{i,j}.
\end{align}
We note that $\{\mV_{i,j}\}_{j=1}^{{\color{black}\hat{n}}}$ and $\{\mW_{j}\}_{j=1}^{{\color{black}\hat{n}}}$ are independent and identically distributed for each $i$. We define the moments of  $\mV_{i,j}$ and $\mW_{j}$ as
\begin{align}
\label{eq:moments1_def}
\alpha_{\mV_{i}}   & = \mathbb{E}(\mV_{i,j}); & \alpha_{\mW}       & = \mathbb{E}(\mW_{j});\\
\label{eq:moments2_def}
\beta^2_{\mV_{i}} & = \mathbb{E}|\mV_{i,j} - \alpha_{\mV_{i}}|^2; & \beta^2_{\mW} & = \mathbb{E}|\mW_{j} - \alpha_{\mW}|^2;\\
\label{eq:moments3_def}
\gamma_{\mV_{i}}     & = \mathbb{E}|\mV_{i,j} - \alpha_{\mV_{i}}|^3; & \gamma_{\mW}     & = \mathbb{E}|\mW_{j} - \alpha_{\mW} |^3.
\end{align}
Next, we define the appropriately normalized random variables as 
\begin{align}
\label{eq:tilde_def}
\tilde{\mV}_{i} &= {\color{black}\hat{n}}^{-1/2} \mathop{\sum}\limits_{j=1}^{{\color{black}\hat{n}}} \frac{\mV_{i,j} - \alpha_{\mV_{i}}}{\beta_{\mV_{i}}};
& \tilde{\mW} &= {\color{black}\hat{n}}^{-1/2} \mathop{\sum}\limits_{j=1}^{{\color{black}\hat{n}}}\frac{\mW_{j} - \alpha_{\mW}}{\beta_{\mW}},
\end{align}
to which we apply the Berry-Esseen Theorem, restated in Theorem~\ref{thm:berry_Esseen}.{\color{black} The Berry-Esseen approach reduces the identification of a bound to that of identifying the first three centralized moments (\ref{eq:moments1_def})-(\ref{eq:moments3_def}), which can be estimated from QoI samples.}

\begin{theorem}
\label{thm:berry_Esseen}\cite{berry1941accuracy,shevtsova2011absolute}
 Let $F_{\tilde{\mV}_{i}}(\cdot)$ be the cumulative distribution for $\tilde{\mV}_{i}$ and $\Phi(\cdot)$ the cumulative distribution function for the standard normal random variable. There exists a positive constant $C\le 0.4748$ such that for all $t$ ,
 \begin{align}
 |F_{\tilde{\mV}_{i}}(t) - \Phi(t)| &\le \frac{C\gamma_{\mV_{i}}}{\beta^3_{\mV_{i}}\sqrt{{\color{black}\hat{n}}}}; &  |F_{\tilde{\mW}}(t) - \Phi(t)| &\le \frac{C\gamma_{\mW}}{\beta^3_{\mW} \sqrt{{\color{black}\hat{n}}}}. \nonumber
 \end{align} 
\end{theorem}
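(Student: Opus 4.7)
The plan is to recognize that both displayed inequalities in Theorem~\ref{thm:berry_Esseen} are single applications of the classical univariate Berry--Esseen theorem to two particular iid partial sums. Fix a spatial index $i$; the random variables $\{\mV_{i,j}\}_{j=1}^{\hat n}$ in (\ref{eq:V_def}) are iid across $j$ (they come from independent samples $\bm{\xi}_j$ of the HF/BF squared residual at point $i$), have mean $\alpha_{\mV_i}$, variance $\beta^2_{\mV_i}$, and finite third absolute central moment $\gamma_{\mV_i}$, and their standardized partial sum is exactly $\tilde{\mV}_i$ in (\ref{eq:tilde_def}). The sequence $\{\mW_j\}_{j=1}^{\hat n}$ is iid in the same way, with the corresponding moments $\alpha_{\mW}$, $\beta^2_{\mW}$, $\gamma_{\mW}$, and standardized sum $\tilde{\mW}$. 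So a single invocation of Berry--Esseen for each of the two families supplies both bounds with one and the same universal constant $C$.

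Concretely, I would state Berry--Esseen in the textbook form: for an iid sequence $X_1,\dots,X_{\hat n}$ with mean $\mu$, variance $\sigma^2>0$, and finite third central absolute moment $\rho$, the standardized sum $S_{\hat n}=\hat n^{-1/2}\sum_{j=1}^{\hat n}(X_j-\mu)/\sigma$ obeys $\sup_t|F_{S_{\hat n}}(t)-\Phi(t)|\le C\rho/(\sigma^3\sqrt{\hat n})$. Substituting $(\mu,\sigma^2,\rho)=(\alpha_{\mV_i},\beta^2_{\mV_i},\gamma_{\mV_i})$ reproduces the first inequality verbatim, and the substitution $(\alpha_{\mW},\beta^2_{\mW},\gamma_{\mW})$ reproduces the second. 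Since the constant $C$ in this reference form does not depend on the distribution of the $X_j$, the same $C$ serves both cases.

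For a self-contained derivation of the bound itself (with some, not necessarily sharp, constant), I would use the standard Fourier-analytic route. Esseen's smoothing inequality reduces $\sup_t|F_{S_{\hat n}}(t)-\Phi(t)|$ to an integral of $|\phi_{S_{\hat n}}(s)-e^{-s^2/2}|/|s|$ over a finite interval $[-T,T]$, plus a boundary contribution of order $1/T$ coming from $\sup|\Phi'|$. A third-order Taylor expansion of $\log\phi_{S_{\hat n}}$ near the origin, controlled by $\rho/\sigma^3$, bounds the integrand; choosing $T$ of order $\sigma^3\sqrt{\hat n}/\rho$ balances the two contributions and yields the $1/\sqrt{\hat n}$ rate.

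The main obstacle is the sharp constant $C\le 0.4748$. This requires the considerably more delicate analysis of \cite{shevtsova2011absolute}, which carefully partitions the frequency range $[-T,T]$ into small- and large-$|s|$ regimes, uses tight bounds on the Taylor remainder of $\log\phi_{S_{\hat n}}$, and exploits explicit decay estimates on characteristic-function tails; reproducing it here would be disproportionate to the downstream use. For the present paper I would therefore invoke \cite{berry1941accuracy,shevtsova2011absolute} as a black box, since only the finiteness of $C$ and its independence from the laws of $\mV_{i,j}$ and $\mW_j$ are needed for the a~posteriori bound derived in the subsequent corollary.
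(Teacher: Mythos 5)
Your proposal matches the paper's treatment: Theorem~\ref{thm:berry_Esseen} is stated as a direct citation of the classical Berry--Esseen theorem (with Shevtsova's constant $C\le 0.4748$) applied to the iid standardized sums $\tilde{\mV}_i$ and $\tilde{\mW}$ of (\ref{eq:tilde_def}), and the paper offers no independent proof beyond this invocation. Your identification of the substitutions $(\alpha_{\mV_i},\beta^2_{\mV_i},\gamma_{\mV_i})$ and $(\alpha_{\mW},\beta^2_{\mW},\gamma_{\mW})$, together with treating the sharp constant as a black box from the cited references, is exactly the intended reading.
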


\begin{corollary}
\label{cor:cor2}
Under the conditions of Theorem~\ref{thm:new_pc}, it follows that, for each $i$, any $t$, and with a probability
\begin{equation}
p_i(t) \ge \Phi(t) - \frac{C\gamma_{\mV_{i}}}{\beta^3_{\mV_{i}}\sqrt{{\color{black}\hat{n}}}},
\label{eq:prob_vector}
\end{equation}
the following bound holds;
\begin{equation}
 \nu_{i} \le \left(\alpha_{\mV_{i}} + \frac{t\beta_{\mV_{i}}}{\sqrt{{\color{black}\hat{n}}}}\right).
 \label{eq:nu_bound_i_pract}
\end{equation}
Similarly, for any $t$, and with a probability
\begin{equation}
p(t) \ge \Phi(t) - \frac{C\gamma_{\mW}}{\beta^3_{\mW}\sqrt{{\color{black}\hat{n}}}},
\label{eq:prob_sum}
\end{equation}
the following bound holds;
\begin{equation}
 \mathop{\sum}\limits_{i=1}^{M}\nu_{i} \le \left(\alpha_{\mW} + \frac{t\beta_{\mW}}{\sqrt{{\color{black}{\color{black}\hat{n}}}}}\right).
 \label{eq:nu_bound_pract}
\end{equation}
\end{corollary}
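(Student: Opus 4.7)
The plan is to combine Theorem~\ref{thm:new_pc} with the Berry-Esseen estimate in Theorem~\ref{thm:berry_Esseen}, exploiting the fact that, up to an asymptotically vanishing correction, the right-hand sides of (\ref{eq:nu_bound_theta_i}) and (\ref{eq:nu_bound_theta}) are precisely the i.i.d.\ sample averages of the variables $\mV_{i,j}$ and $\mW_{j}$ introduced in (\ref{eq:V_def}). Once this identification is made, Berry-Esseen becomes the natural tool because it quantifies, in distributional terms, how close a normalized finite sample sum is to Gaussian, and the usual CLT-type rearrangement of such a sum produces a bound of exactly the form $\alpha+t\beta/\sqrt{\hat{n}}$.

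First, I would rewrite the squared residuals in Theorem~\ref{thm:new_pc} using (\ref{eq:V_def}): $\|\mH_{\hat{n}}(i,:)-\widehat{\mH}_{\hat{n}}(i,:)\|_{2}^{2}=\sum_{j=1}^{\hat{n}}\mV_{i,j}$ and $\|\mH_{\hat{n}}-\widehat{\mH}_{\hat{n}}\|_{F}^{2}=\sum_{j=1}^{\hat{n}}\mW_{j}$. Under the event $\mathcal{E}$, Theorem~\ref{thm:new_pc} then reads $\nu_{i}=\theta_{i}\bar{\mV}_{i}$ and $\sum_{i}\nu_{i}=\bar{\theta}\bar{\mW}$, where $\bar{\mV}_{i}:=\hat{n}^{-1}\sum_{j}\mV_{i,j}$, $\bar{\mW}:=\hat{n}^{-1}\sum_{j}\mW_{j}$, and $\theta_{i},\bar{\theta}\to 1$ almost surely. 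This identification provides the bridge between the deterministic quantities $\nu_{i}$ on the left-hand side and the empirical means to which Berry-Esseen applies.

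Second, I would invoke Theorem~\ref{thm:berry_Esseen} for $\tilde{\mV}_{i}$. By the definition in (\ref{eq:tilde_def}), the event $\{\tilde{\mV}_{i}\le t\}$ is algebraically equivalent to $\{\bar{\mV}_{i}\le\alpha_{\mV_{i}}+t\beta_{\mV_{i}}/\sqrt{\hat{n}}\}$, and Berry-Esseen yields $\mathbb{P}(\tilde{\mV}_{i}\le t)\ge\Phi(t)-C\gamma_{\mV_{i}}/(\beta_{\mV_{i}}^{3}\sqrt{\hat{n}})=p_{i}(t)$. Combining this one-sided tail inequality with the identification $\nu_{i}=\theta_{i}\bar{\mV}_{i}$ and absorbing $\theta_{i}$ into the almost-sure statement produces the pointwise bound (\ref{eq:nu_bound_i_pract}) with probability at least $p_{i}(t)$, matching (\ref{eq:prob_vector}). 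Repeating the argument verbatim for $\mW_{j}$, $\bar{\mW}$, and $\tilde{\mW}$ (and using the Frobenius identity together with the i.i.d.\ property of the $\mW_{j}$) produces (\ref{eq:nu_bound_pract}) with probability at least $p(t)$.

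The main obstacle I expect is the clean handling of the multiplicative random factors $\theta_{i}$ and $\bar{\theta}$ inherited from Theorem~\ref{thm:new_pc}, which are only guaranteed to equal $1$ in the $\hat{n}\to\infty$ limit; a strictly non-asymptotic version of the corollary would have to carry a finite-sample $|\theta_{i}-1|$ (respectively $|\bar{\theta}-1|$) correction through the rearrangement, which the stated corollary suppresses by invoking the almost-sure convergence. The only other subtle point is checking that $\mV_{i,j}$ and $\mW_{j}$ have finite third absolute centralized moments $\gamma_{\mV_{i}},\gamma_{\mW}$ so that Berry-Esseen is applicable, but this follows from the boundedness of the realizations used in practice.
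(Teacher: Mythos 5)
Your proposal is correct and follows essentially the same route as the paper, whose proof is a one-line appeal to ``algebra on the Berry--Esseen bound, (\ref{eq:nu_bound_theta_i}), and (\ref{eq:nu_bound_theta}), using the definitions of (\ref{eq:tilde_def})''; you have simply made that algebra explicit by identifying the residual norms as i.i.d.\ sums of the $\mV_{i,j}$ and $\mW_j$ and rearranging the event $\{\tilde{\mV}_i\le t\}$. Your closing caveat about the suppressed finite-sample factors $\theta_i$ and $\bar{\theta}$ is a fair observation that the paper's own proof likewise glosses over.
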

\begin{proof}
 This proof follows directly from algebra on the Berry-Esseen bound, (\ref{eq:nu_bound_theta_i}), and (\ref{eq:nu_bound_theta}), using the definitions of (\ref{eq:tilde_def}). $\blacksquare$
\end{proof}

{\color{black}

It is worthwhile highlighting that the approach of Corollary \ref{cor:cor2} differs from cross-validation in two key respects. First, cross-validation uses a subset of samples to compute an estimate, and another subset to estimate the error. This is especially useful when an estimate tends to overfit the data. Here, we oversample when computing our estimates, and thus avoid the problems of overfitting. Second, the Berry-Esseen approach can compute a single estimate, and all of the data to compute the error with a reasonably accurate probablistic bound on $\nu_i$. This cannot be easily translated to cross-validation. Each $\nu_i$ depends on the samples used to construct it, and cross-validation depends on computing different estimates from different samples. In this way the $\nu_i$ would differ, and as each construction would share a number of samples, they would not be independent. Averaging the various error estimates with cross-validation, provides an estimate for an unintuitive quantity.} 

Algorithm \ref{alg:bound_pract} presents the method of evaluating the practical error bounds in (\ref{eq:nu_bound_i_pract}) and (\ref{eq:nu_bound_pract}). 
\begin{algorithm}
\caption{Practical error bounds of (\ref{eq:nu_bound_i_pract}) and (\ref{eq:nu_bound_pract}).}
\label{alg:bound_pract}
\begin{algorithmic}[1]
\item Choose $t$ and corresponding $\Phi(t)$ from the standard normal distribution.
\item Use $\hat{n}$ HF samples to evaluate the variables in (\ref{eq:V_def}) and their moments in (\ref{eq:moments1_def}) - (\ref{eq:moments3_def}). 
\item Evaluate the probability and associated error bound  in equations (\ref{eq:prob_vector}) and (\ref{eq:nu_bound_i_pract}) for pointwise estimates, or equations (\ref{eq:prob_sum}) and (\ref{eq:nu_bound_pract}) for a sum of all points.
\end{algorithmic}
\end{algorithm}

\section{Numerical examples}
\label{sec:numerical_examples_pc}

In this section we demonstrate the effectiveness of the reduced basis approach and practical error bound described in Section \ref{sec:method_detail_pc} (Corollary \ref{cor:cor2}) on three numerical examples from fluid mechanics. The examples model a range of Reynolds number (Re) with uncertainty in initial and boundary conditions, fully turbulent heat transfer with uncertain boundary conditions{\color{black},} and fully turbulent flow featuring geometric uncertainty. In each application, we are interested in estimating an output QoI subject to stochastic inputs. 
We compare the performance of estimates arrived at with LF, HF, and BF methods, implementing the BF Algorithm \ref{alg:sto_mod_red}. The LF method utilizes $N$ less expensive samples, while the HF method is made up of $n \ll N$ expensive HF samples. It is assumed that the computational cost of the HF model is much greater than its LF counterpart. 
The BF approach utilizes $n$ HF samples and $N$ LF samples. In this manner, the BF and HF methods use the same number of HF samples and, provided the LF model cost is negligible, have approximately equal computational cost. We compare model estimates of the QoIs mean and variance that can be computed from the expansion coefficients as $\mathbb{E}[u]  = c_1 $ and $\text{Var}[u] = \sum_{j=2}^P  c_{j}^2$. We define the reference solution as the full set of HF data, denoted by \textit{ref}, and calculate the relative 2-norm error of of our vector QoI estimates as
\begin{align}
    \text{Relative Error}_{\mathbb{E}[\vu]} &= \frac{\Vert \mathbb{E}[\vu]_{ref} - \mathbb{E}[\vu] \Vert_2}{\Vert \mathbb{E}[\vu]_{ref} \Vert_2}; \\
    \text{Relative Error}_{\text{Var}[\vu]} &= \frac{\Vert \text{Var}[\vu]_{ref} - \text{Var}[\vu] \Vert_2}{\Vert \text{Var}[\vu]_{ref} \Vert_2}  \label{eq:err_metric}.
\end{align}

During the numerical examples, we employ compressive sensing for solving the $\ell_1$-minimization problem (\ref{eq:ell_1}) to determine LF PC expansion coefficients in equation (\ref{eq:pce_low}) and use least squares to solve for the BF coefficients of equation (\ref{eq:bi_fid}). In each numerical example, we investigate the utility of the practical bounds (\ref{eq:nu_bound_i_pract}) and (\ref{eq:nu_bound_pract}) implemented via Algorithm \ref{alg:bound_pract}. 
In doing so, for all numerical test cases, we set $t = 2.0$ in Corollary~\ref{cor:cor2} {\color{black}and use the same $n$ HF samples used to build the BF approximation to compute the bound, i.e., $\hat{n}=n$.} 

\subsection{Example I: lid-driven cavity flow}
\label{sec:case_1_ldc}

We consider the benchmark problem of a lid-driven cavity flow \cite{ghia1982high}, where the QoI is the $y$ velocity component measured along the line $y = 0.5$, $0 \leq x \leq 1.0$; see Figure \ref{fig:LDC_1} (a). The problem addresses the two-dimensional, steady, incompressible Navier-Stokes equations with nominal $Re = 10^2$ and is solved for in \texttt{FEniCS} using Taylor-Hood elements \cite{AlnaesBlechta2015a} with codes based on \cite{chandGithub}.

The geometry is a unit square with Dirichlet boundary conditions on all four walls as depicted in Figure \ref{fig:LDC_1} (a). The top wall is moving at a fixed velocity, while the remaining three are stationary.
The stochastic dimension of the problem is two; the velocity of the top plate and the kinematic viscosity of the fluid are treated as uniform random variables within intervals $ [0.8 \; 1.2]$ and $ [0.009 \; 0.011]$, respectively. 

A coarse $4\times 4$ grid mesh is used as the LF model and a fine $64 \times 64$ grid mesh is used as the HF model, depicted in Figures \ref{fig:LDC_1} (b) and (c){\color{black},} respectively. Grid points are more closely packed with proximity to walls to improve the resolution of flow gradients. The HF model's computational expense is approximately 155 times greater than its LF counterpart. 

\begin{figure*}[ht!]
\centering
\begin{subfigure}{0.3\textwidth}
  \centering
  \includegraphics[width=\textwidth]{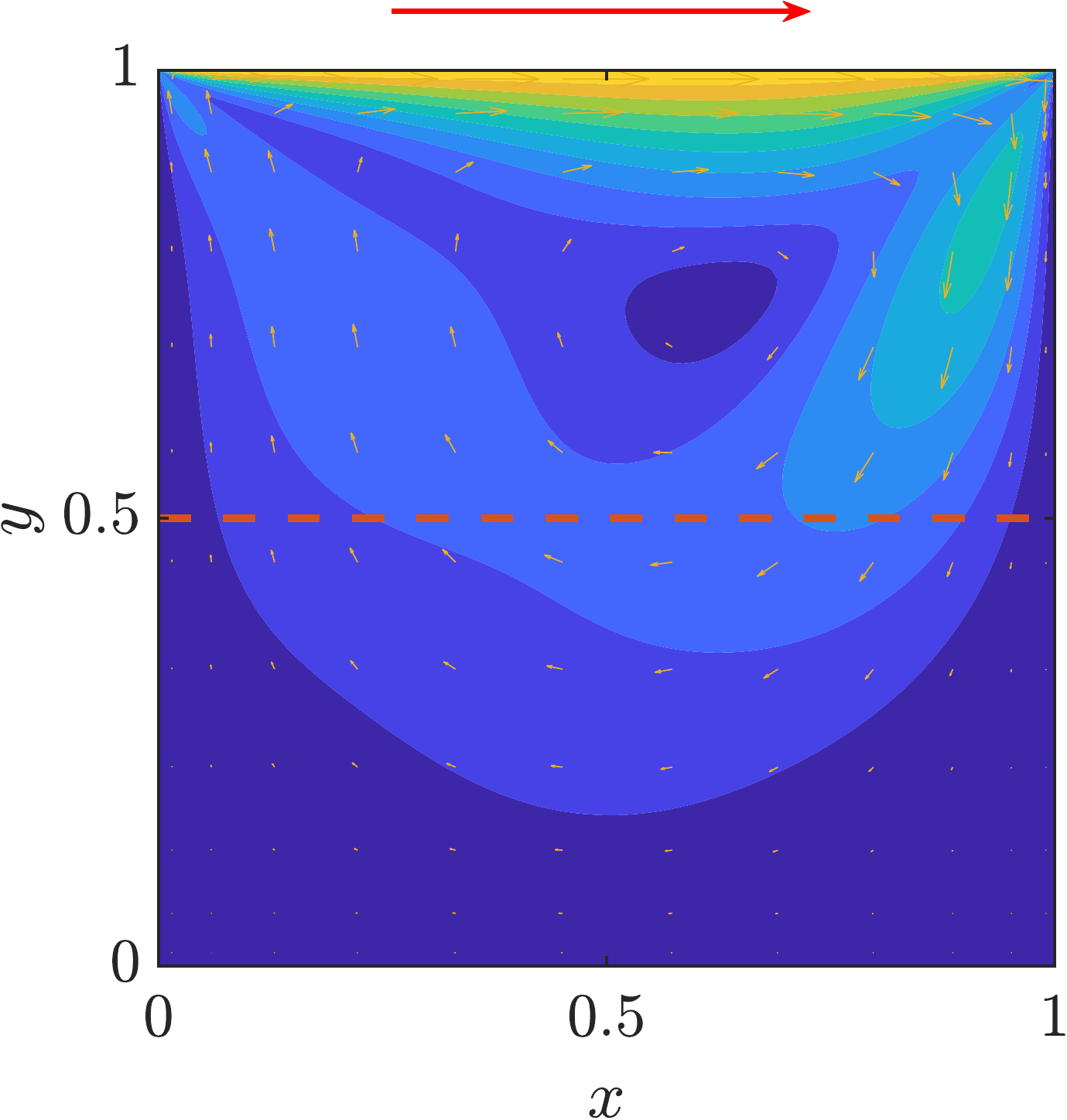}
  \caption{Cavity geometry.}
  \label{fig:LDC_geom_pc}
\end{subfigure}
\hfill
\begin{subfigure}{0.3\textwidth}
  \centering
  \includegraphics[width=\textwidth]{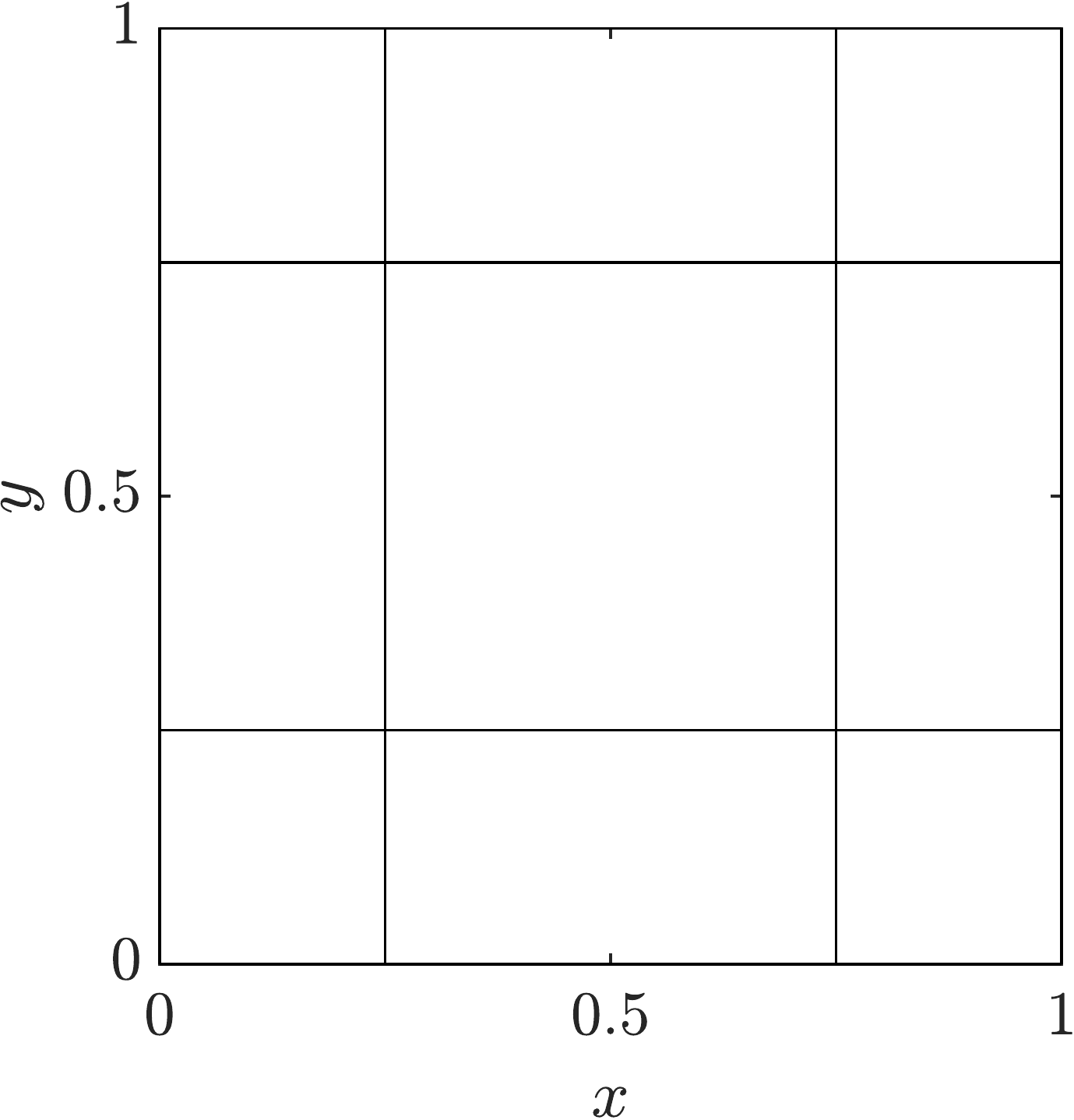}
  \caption{LF mesh.}
  \label{fig:LDC_meshL_pc}
\end{subfigure}
\hfill
\begin{subfigure}{0.3\textwidth}
  \centering
  \includegraphics[width=\textwidth]{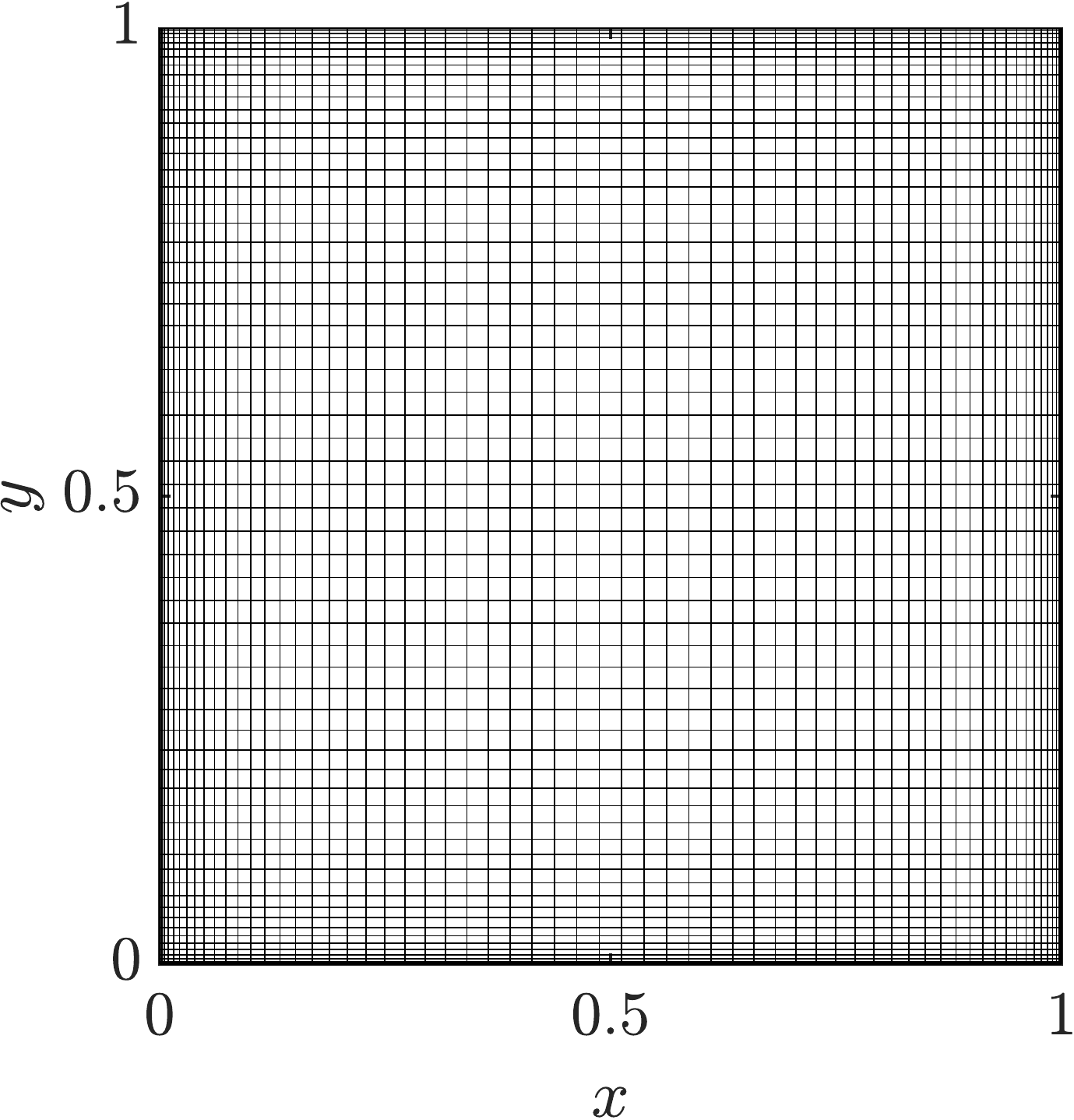}
  \caption{HF mesh.}
  \label{fig:LDC_meshH_pc}
\end{subfigure}
\caption[Lid-driven cavity geometry of Example I.]{Lid-driven cavity geometry of Example I. QoI is the vertical velocity along the line $y = 0.5$, $0 \leq x \leq 1$.}
\label{fig:LDC_1}
\end{figure*}
\subsubsection{Results} 
\label{sec:ldc_results_pc}

We first address the average BF performance over $100$ repetitions as the number of HF samples and approximation rank are varied, each computed from independent sets of randomly chosen $n$ HF samples.   Figure \ref{fig:LDC_BR} depicts the average relative error of a PC expansion of order $p= 4$ constructed from $N=200$ LF samples, $n$ HF samples, and a BF estimate that utilizes $n$ HF samples. The reference solution is calculated via a PC expansion with $N=200$ HF samples. BF estimates for approximation ranks $r=2$, $4${\color{black},} and $7$ are reported.  
\begin{figure*}[ht!]
\centering
\includegraphics[width=\textwidth]{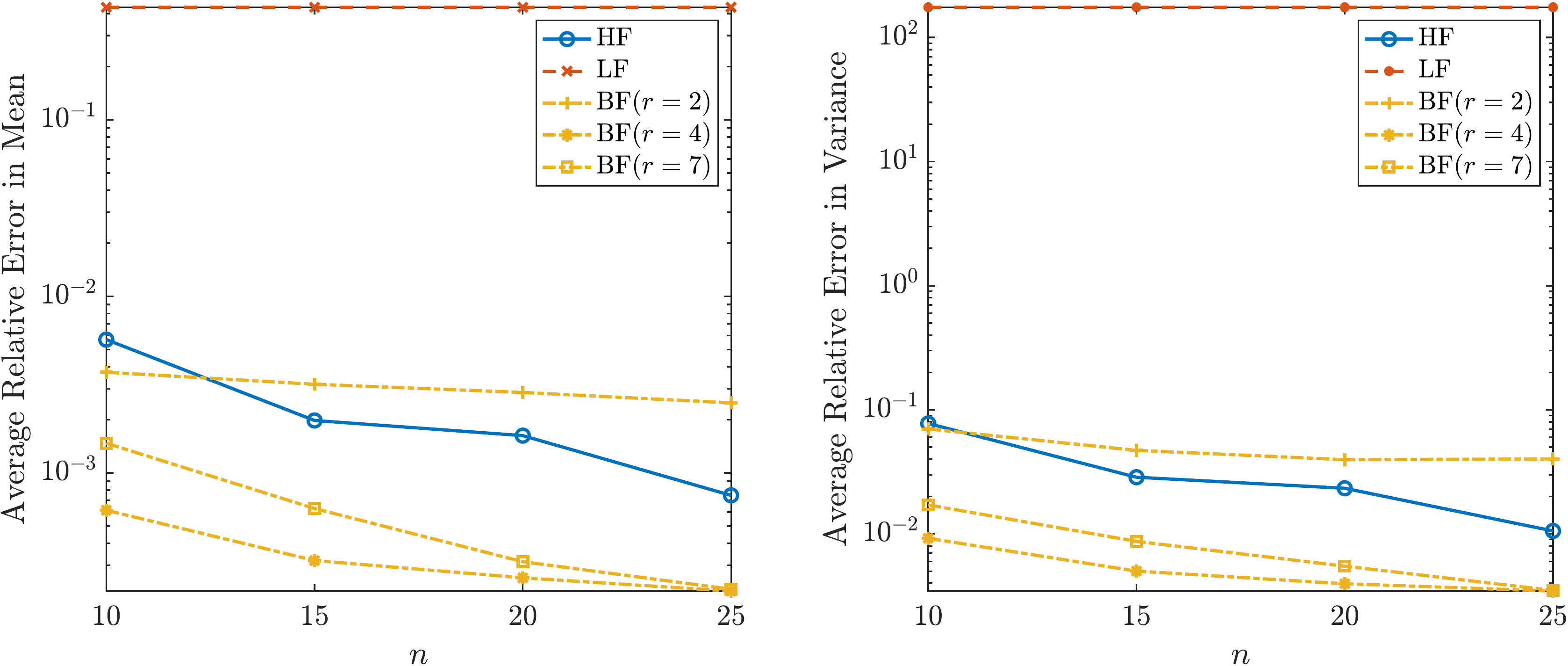}
\caption[The average relative error for (left) mean and (right) variance of the vertical velocity component through a half of the cavity in Example I.]{The average relative error for (left) mean and (right) variance of the vertical velocity component through a half of the cavity in Example I. Plotted are the HF, LF, and BF estimates for approximation rank $r$. The average is calculated from $100$ repetitions.}
\label{fig:LDC_BR}	
\end{figure*}
Evident in Figure \ref{fig:LDC_BR} is that the LF model provides a poor estimate of both the mean and variance. This is expected given the aggressive coarseness of the LF solution and its corresponding failure to accurately capture the problem physics. A reduced basis of rank $r=4$ accurately estimates the QoI mean and variance with $n \geq 10$ HF samples available, achieving an order of magnitude better performance than the HF model with $n=10$. Proceeding with a reduced basis of rank $r=4$, Figure \ref{fig:LDC_mean_var} compares the mean and variance estimates made with $n=10$ HF samples. The BF estimate accurately captures both the mean and variance of the vertical velocity component across the cavity. The HF estimate exhibits comparable accuracy in estimation of the mean, but is exceeded by the BF estimation of the variance, a more challenging statistic to approximate. The full PC expansion has $P = 15$ terms, reduced to $r=4$ in the BF approach. 
\begin{figure*}[ht!]
\centering
\includegraphics[width=\textwidth]{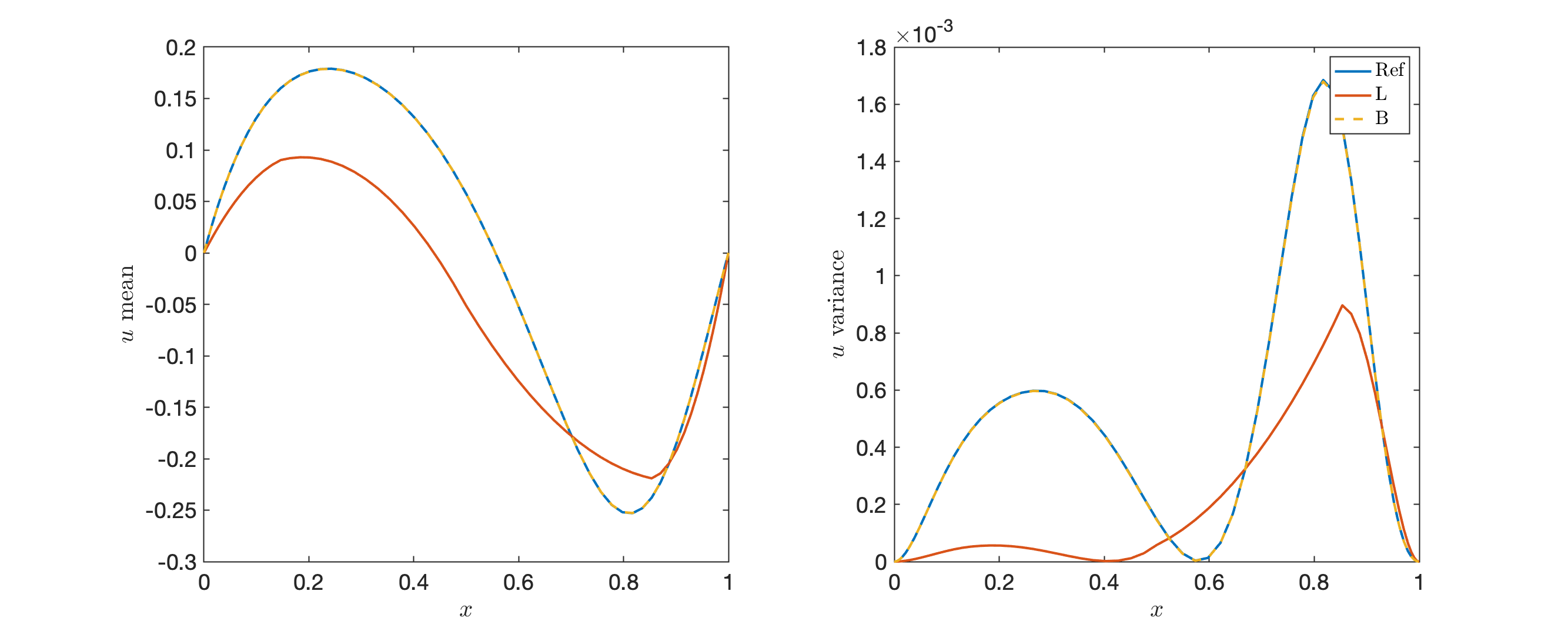}
\caption[Vertical velocity component along the line $y=0.5$ in Example I. Shown are estimates of the (left) mean and (right) variance, for rank $r = 4$ with $n=10$ HF samples.]{Vertical velocity component along the line $y=0.5$ in Example I. Shown are estimates of the (left) mean and (right) variance, for rank $r = 4$ with $n=10$ HF samples. The reference solution is given by $Ref$ alongside the HF, LF and BF estimates.}
\label{fig:LDC_mean_var}	
\end{figure*}

Figure \ref{fig:LDC_Eig} presents the normalized eigenvalues of the LF and reference covariance matrices. The swift decay in eigenvalues of both LF and reference solutions suggest this problem is amenable to reduced basis approximation. This agrees with the reduced basis approximations of Figures \ref{fig:LDC_BR} and \ref{fig:LDC_mean_var}, where only the first four basis functions are needed to accurately capture the QoI. 
\begin{figure}[ht!]
\centering
\includegraphics[width=\columnwidth]{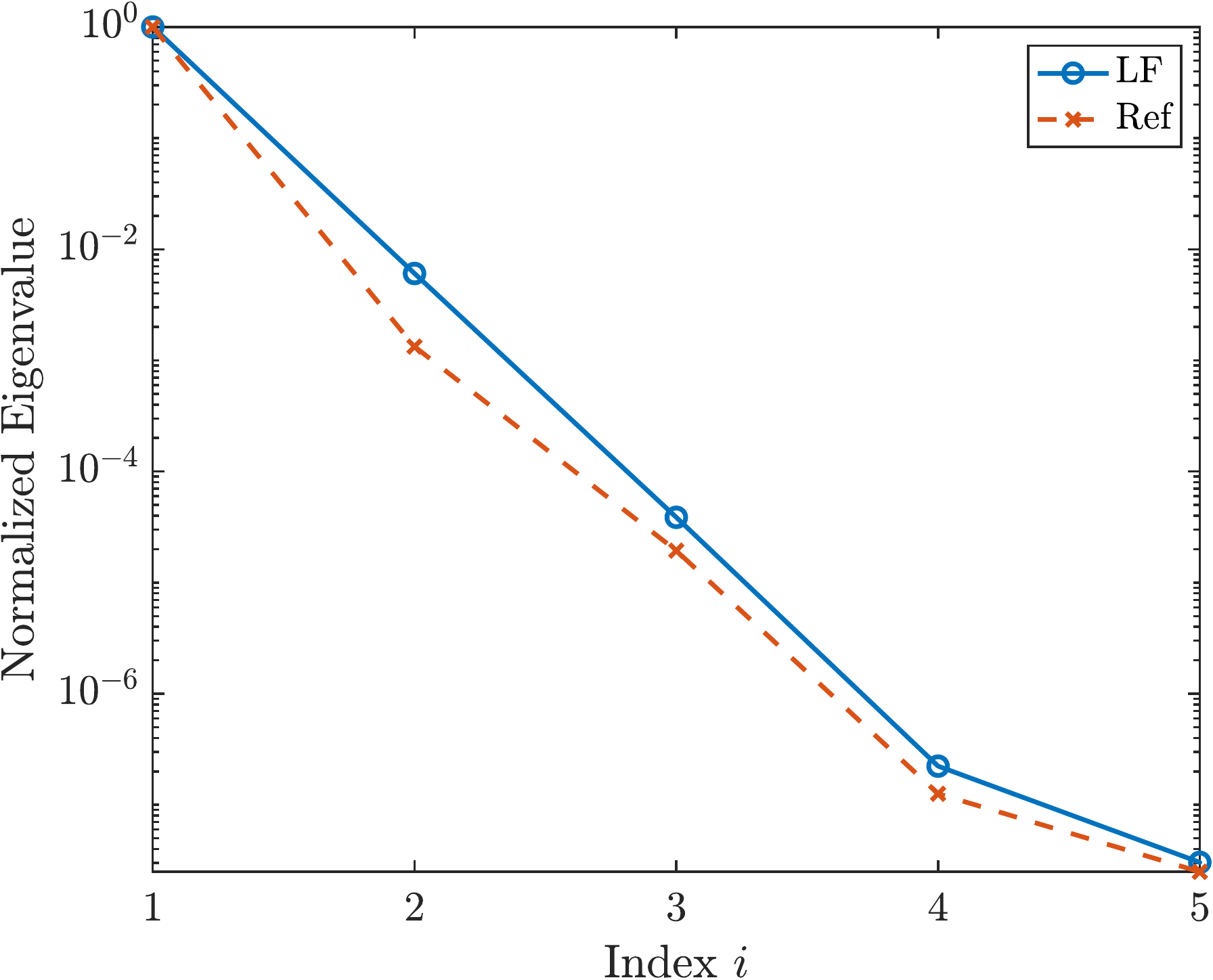}
\caption{Normalized eigenvalues of the LF and reference, Ref covariance matrices of the vertical velocity component along $y=0.5$ in Example I.}
\label{fig:LDC_Eig}	
\end{figure}

We now consider the performance of the error bound. In the following, we set $r=4 $ and $n = 15$. A useful metric in our assessment of bound performance is the error bound efficacy, calculated as the (average) ratio of the error bound and the true error{\color{black},} for different approximation ranks $r$ and number of HF samples $n$. An efficacy of $1$ indicates that the error bound is exact, and an efficacy of greater than $1$ implies the bound does not underestimate the error,which could happen as each of the bounds are only guaranteed to hold with some probability. Note that, while the bounded error is squared, we take the square root in our efficacy calculation. 
In Table \ref{tab:ldc_bound1}, we report the efficacy of the error bound  (\ref{eq:nu_bound_pract}) and associated probability (\ref{eq:prob_sum}), calculated with $t=2.0$. 
Note that the probability computed in (\ref{eq:prob_vector}) matches (\ref{eq:prob_sum}). The error is tightly bounded with high probability. 
  \begin{table}[htbp]
  \centering
      \caption{Practical error probability (\ref{eq:prob_sum}) and bound efficacy (\ref{eq:nu_bound_pract}) in Example I. Results are calculated as the average of $30$ repetitions.}
    \begin{tabular}{lllllll}
    \hline\noalign{\smallskip}
  QoI   & $r$ & $n$ & $\hat{n}$ & $N$ & Prob. (\ref{eq:prob_sum}) & Eff. (\ref{eq:nu_bound_pract})  \\
    \noalign{\smallskip}\hline\noalign{\smallskip}
 Vertical Velocity & $4$ & $15$ & $15$ & $200$ & $0.855$  & $1.83$\\
    \noalign{\smallskip}\hline
    \end{tabular}%
  \label{tab:ldc_bound1}%
\end{table}

In Figure \ref{fig:LDC_bound} (a) we present the pointwise reference error, calculated as the average of the entire HF ensemble for each point, and error bound (\ref{eq:nu_bound_i_pract}). Bound (\ref{eq:nu_bound_i_pract}) is very tight and closely matches the shape of the reference error over the entire interval of $x$.
Figure \ref{fig:LDC_bound} (b) depicts the efficacy for a given $(n,r)$ pair as the average of $30$ repetitions. For a very small number of HF samples the error bound is not effective, but as the number of samples increases above $n = 10$ we see the desired behavior of efficacy greater than $1$. In addition, the practical error bound does not exceed more than three times the BF error, implying evaluation of the bound in (\ref{eq:nu_bound_pract}) can be a useful tool for determining whether a given LF data-set is appropriate. Together, Figures \ref{fig:LDC_bound} (a) and (b) show that the error bound (\ref{eq:nu_bound_i_pract}) and (\ref{eq:nu_bound_pract}) are useful to assess pointwise and ensemble accuracy, respectively.
\begin{figure*}[ht!]
\centering
\begin{subfigure}{0.45\textwidth}
  \centering
  \includegraphics[width=\textwidth]{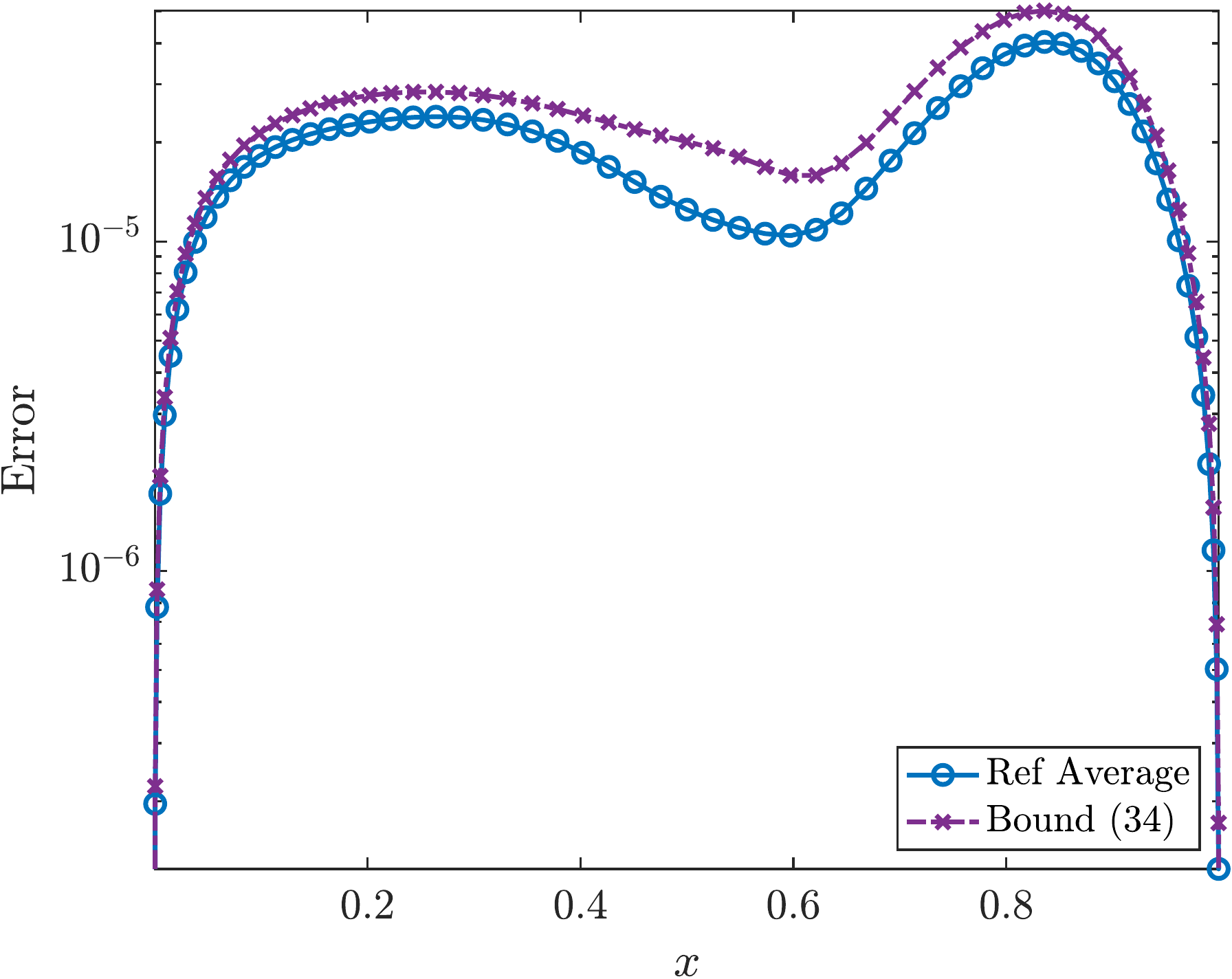}
  \caption{Pointwise true error and error bound (\ref{eq:nu_bound_i_pract}).  }
  \label{fig:LDC_point}
\end{subfigure}
\hfill
\begin{subfigure}{0.45\textwidth}
  \centering
  \includegraphics[width=\textwidth]{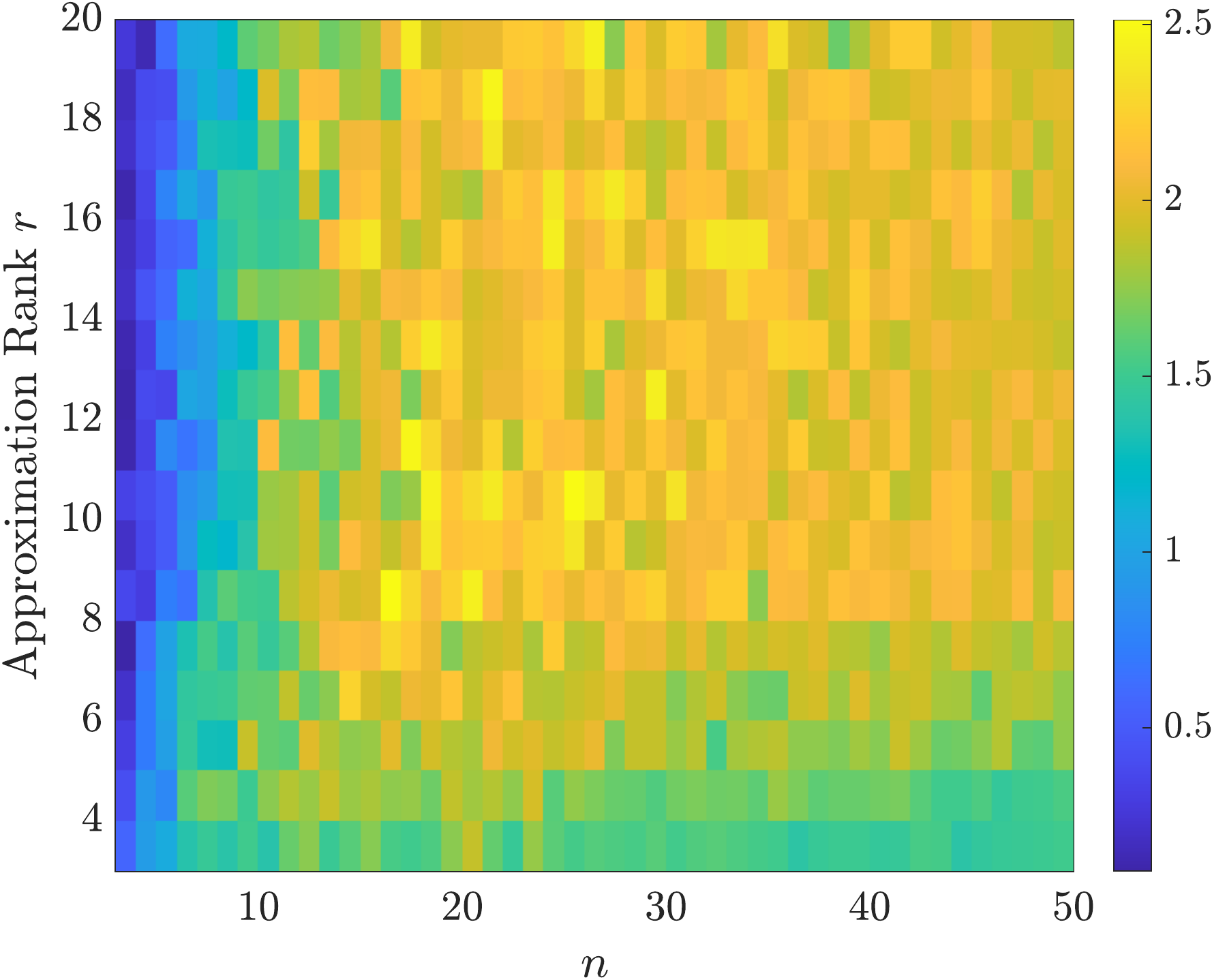}
  \caption{Practical error bound efficacy.}
  \label{fig:LDC_efficacy}
\end{subfigure}
 \caption[Error bound performance for the lid-driven cavity of Example I.]{Error bound performance for the lid-driven cavity of Example I. We present the average calculated from $30$ repetitions, where each repetition includes the estimate of $N=200$ samples. In (a) we use $n = 15$ HF samples and rank $r= 4$. In (b) the practical bound efficacy is calculated as the (average) ratio of the error estimated from the practical error bound (\ref{eq:nu_bound_pract}) and the true error for different approximation ranks $r$ and number of HF samples $n$. We use the same $n$ HF samples used to build the BF approximation to compute the bound.}
   \label{fig:LDC_bound}
\end{figure*}
\subsection{Example II: flow past a heated cylinder}
\label{sec:case_2_cylinder}

Gas turbines function at high combustor outlet temperatures and endeavor to mitigate thermal stress and fatigue through cooling flow passages. Cooling pin arrays are introduced to the turbine airfoil that enhance heat transfer though both additional surface area and increased turbulence \cite{lyall2011heat}. Extracted coolant flow to cool the turbine can comprise as much as $10 \%$ of the engine's core flow making it an important factor in turbine performance \cite{hill1992mechanics}.  

We approximate the cooling of a turbine blade with a two-dimensional incompressible flow past an array of pins, taking inspiration from \cite{constantine2009hybrid}. The domain is a rectangle with corners $(-0.2,-0.1)$ and $(1.0,0.1)$ and a circle centered at $(0,0)$ with radius $0.05$ as seen in Figure \ref{fig:GT_mesh}. The dimensions are in consistent units. The vertical direction is periodic to simulate a pin separation of $L/D = 1$, $D$ being the diameter of the cylinder and $L$ the distance between neighboring pins. Inflow and outflow are applied from left to right and the cylinder is no-slip. We assume the flow to be fully turbulent with $Re_D = 10^6$, the Reynolds number measured with the characteristic length as the cylinder diameter. 
\begin{figure*}[ht!]
\centering
\includegraphics[width=\textwidth]{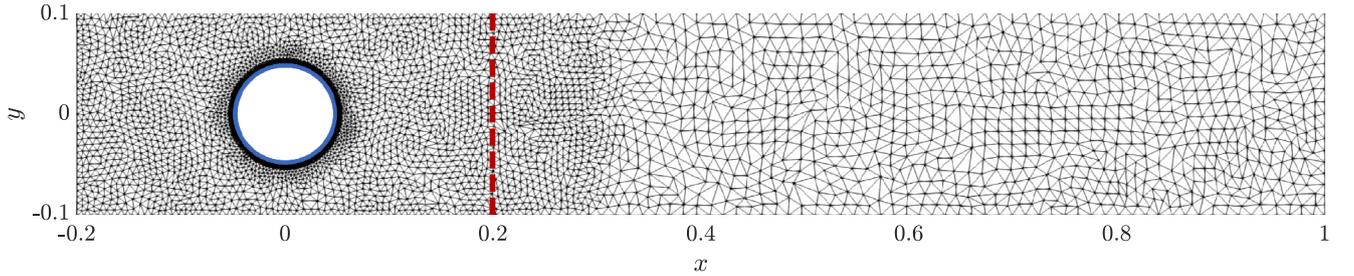}
\caption[Cylinder geometry and coarse computational mesh of Example II.]{Cylinder geometry and coarse computational mesh of Example II. The blue solid circle indicates $T_{\text{cylinder}}$, the temperature of the cylinder surface and the red dashed line indicates $T_{x=0.2}$, the temperature along a vertical line at $x=0.2$.}
\label{fig:GT_mesh}	
\end{figure*}

In practice, the fluid flow arriving to the cylinders is a product of numerous upstream component interactions such as the turbulators and slots \cite{constantine2009hybrid}. The inlet velocity profile is modeled as the sum of two waveforms with different frequencies and random amplitudes as 
\begin{align}
V \vert_{\text{inlet}} (y, Y_1, Y_2) =& 156.8 (1+ h_1 Y_1 \cos( 20 \pi y) \\
&+ h_2 Y_2\cos( 100 \pi y) ),
\label{eq:inlet_vel}
\end{align}
where $y \in [-0.1, 0.1]$. The inlet temperature is modeled as a Gaussian pulse of random mean and constant amplitude given by
\begin{equation}
T \vert_{\text{inlet}}(y, Y_3) = 300 + 100 \: \exp  \left( -\frac{(y - 0.05 Y_3)^2 }{2 \times \: 0.01^2} \right),
\label{eq:inlet_temp}
\end{equation}
where the mean is constrained within  $\pm 0.05$. The heat flux over the cylinder wall is defined by an exponential as
\begin{equation}
\left.\frac{\partial T}{\partial n} \right\vert_\text{cyl} ( \theta, Y_4) = -50 \: \exp \left(-\left(0.1+ \frac{h_3 Y_4 \cos \theta}{2} \right) \right),
 \label{eq:cylinder_flux}
\end{equation}
where $\theta \in [0, 2\pi]$ is the angle from the leading edge. Four stochastic inputs, $Y_1$, $Y_2$, $Y_3$, and $Y_4$, all with uniform distribution $ U[-1, 1]$, alongside three constants, $h_1 = 0.7$, $h_2 = 0.2$ and $h_3 = 0.9$, determine the boundary conditions.

The problem is solved using the Reynolds averaged Nav\hyp{}ier-Stokes (RANS) equations in \texttt{PHASTA}, a parallel hierarchic adaptive stabilized transient analysis computational fluid dynamics (CFD) code \cite{whiting2001stabilized}. 
The turbulence closure model is Spalart-Allmaras \cite{spalart1992one, pope2001turbulent}. The fluid is modeled as air with viscosity $\nu = 1.568 \times 10^{-5}$ $[\text{m}^2\text{s}^{-1}]$, density $\rho = 1.177$ $[\text{kg} ~ \text{m}^{-3}] $, scalar diffusivity $23.07 \times 10^{-6}$ $[\text{m}^2 \text{s}^{-1}] $, thermal conductivity $k = 26.62 \times 10^{-3}$ $[\text{W} \text{m}^{-1} \text{K}^{-1}]${\color{black},} and turbulent Prandtl number $0.7$. 

A coarse $40{,}000$ element mesh and a fine $110{,}000$ element mesh are employed as the LF and HF models, respectively. The HF model ensures $y^+ <1$ is maintained on the cylinder surface and a steady state solution is attained. The computational cost of the HF model is approximately $50$ times greater than the LF model. Two QoIs are considered, the temperature of the cylinder surface, $T_{\text{cylinder}}$ and the temperature along a vertical line through the geometry at $x = 0.2$, $T_{x=0.2}$; see Figure \ref{fig:GT_mesh}. 

\subsubsection{Results}
\label{sec:cylinder_results}

We consider BF estimates that use $N=200$ LF and $n$ HF samples. The average relative error for the mean and variance of the temperature on the cylinder surface, with a PC expansion of order $p=6$, is plotted in Figure \ref{fig:GT_BR_Cylinder}. For $n = 10$ the LF model out-performs both HF and BF methods in estimating the mean and variance. As $n$ increases the relative errors in the HF and BF methods decrease. Further, it appears that a truncated basis of $r=8$ is sufficient to capture the solution behavior. This is a notable reduction from the full PC expansion of $P = 210$ terms. The BF relative error in variance does not descend below $1 \%$. This result is acceptable given the reference solution error in variance estimation is approximately $1 \%$, and can be considered as the limit of the BF performance. 
\begin{figure*}[ht!]
\centering
\includegraphics[width=\textwidth]{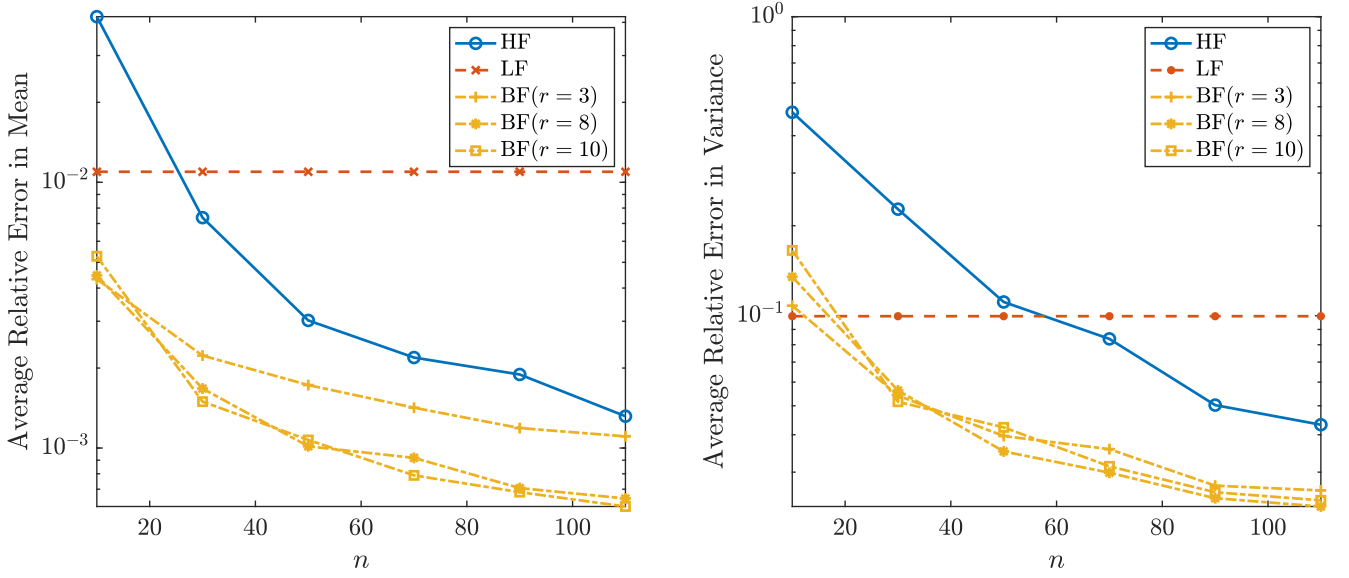}
\caption[The average relative error for the (left) mean, and (right) variance of the temperature on the cylinder surface in Example II.]{The average relative error for the (left) mean, and (right) variance of the temperature on the cylinder surface in Example II. Plotted are the HF, LF, and BF estimates for approximation rank $r$. The average is calculated from $100$ repetitions.}
\label{fig:GT_BR_Cylinder}	
\end{figure*}

Figure \ref{fig:GT_RB_Mid} depicts the average relative error for the mean and variance of the temperature along a vertical line at $x = 0.2$, with a PC expansion of order $p = 6$. Given that the LF errors for both the mean and variance are small, $<0.1$, the utility of the BF method is reduced. In essence, if a LF estimate provides satisfactory results then there is little to be gained through HF samples and a BF approximation. As the number of HF samples increases, we observe crossover points where the the HF estimate is better than the BF estimate, which is typical of multi-fidelity approximations \cite{de2020transfer}. There is also a notable improvement in the BF estimates when the approximation rank is raised from $r=3$ to $8$. 
\begin{figure*}[ht!]
\centering
\includegraphics[width=\textwidth]{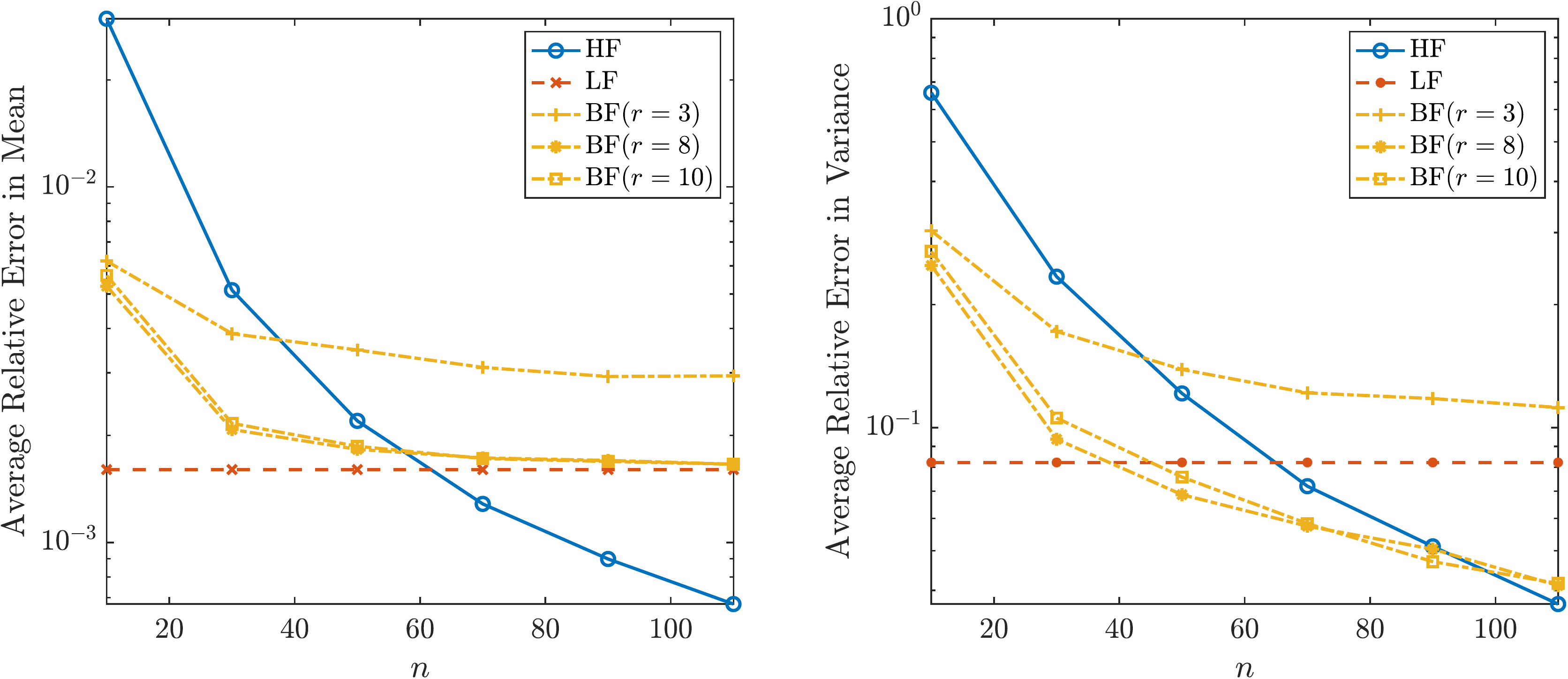}
\caption[The average relative error for the (left) mean, and (right) variance of the temperature along a vertical line at $x = 0.2$ in Example II.]{The average relative error for the (left) mean, and (right) variance of the temperature along a vertical line at $x = 0.2$ in Example II. Plotted are the HF, LF, and BF estimates for approximation rank $r$. The average is calculated from $100$ repetitions.}
\label{fig:GT_RB_Mid}	
\end{figure*}
For both QoIs, the variance is challenging to estimate. The temperature field of four different realizations is plotted in Figure \ref{fig:GT_field}. It is apparent that, depending on the stochastic inputs, the temperature field about the cylinder has significant variance. 
   \begin{figure*}[!htb]
     \centering
          \begin{minipage}[b]{0.4\textwidth}
       \begin{subfigure}[b]{\linewidth}
	   \includegraphics[width=\textwidth]{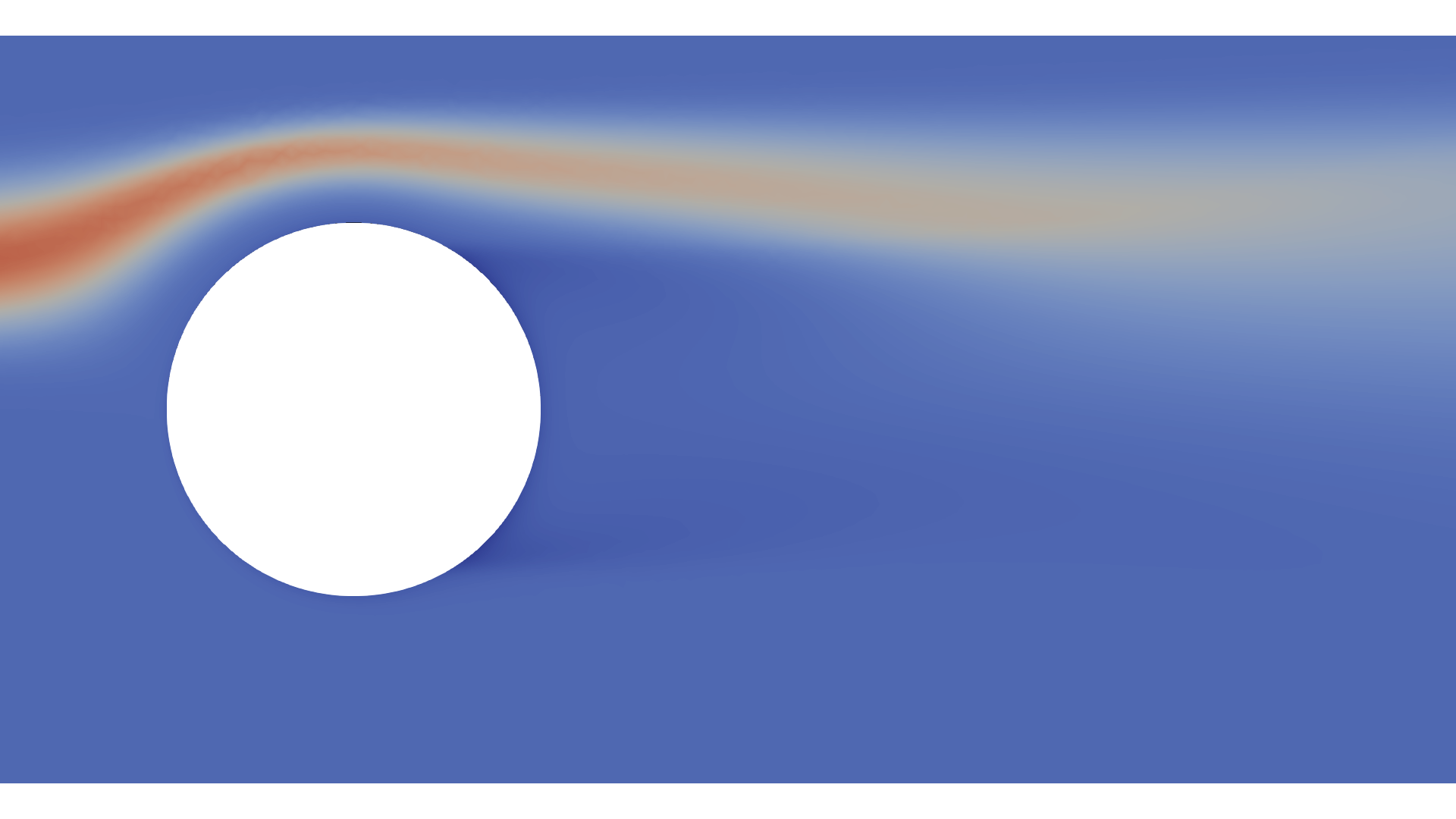}
       \end{subfigure}\\[\baselineskip]
       \begin{subfigure}[b]{\linewidth}
       \vspace{-0.6cm}
	    \includegraphics[width=\textwidth]{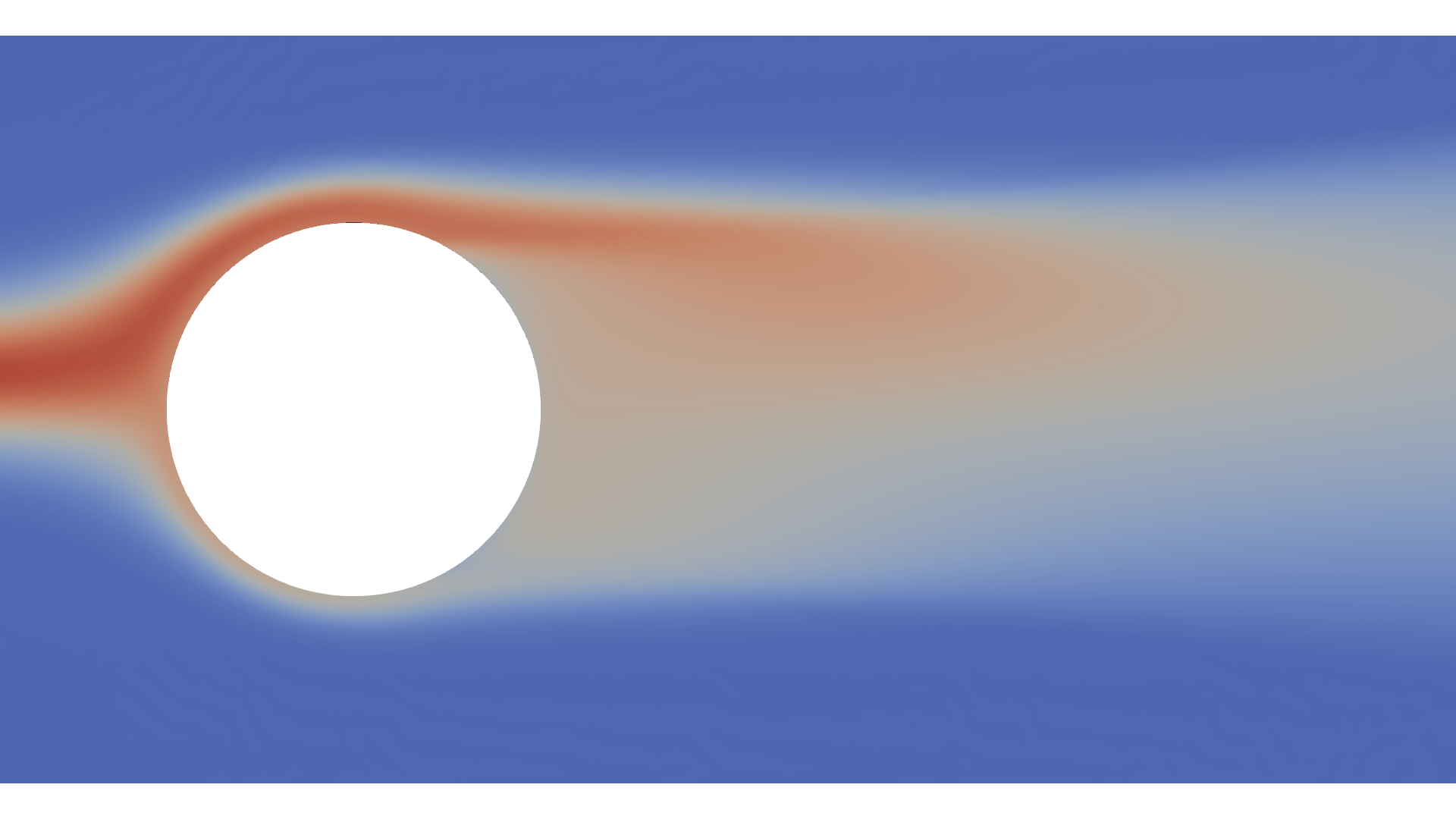}
       \end{subfigure}
     \end{minipage}
      \hfill
        \begin{minipage}[b]{0.4\textwidth}
       \begin{subfigure}[b]{\linewidth}
	    \includegraphics[width=\textwidth]{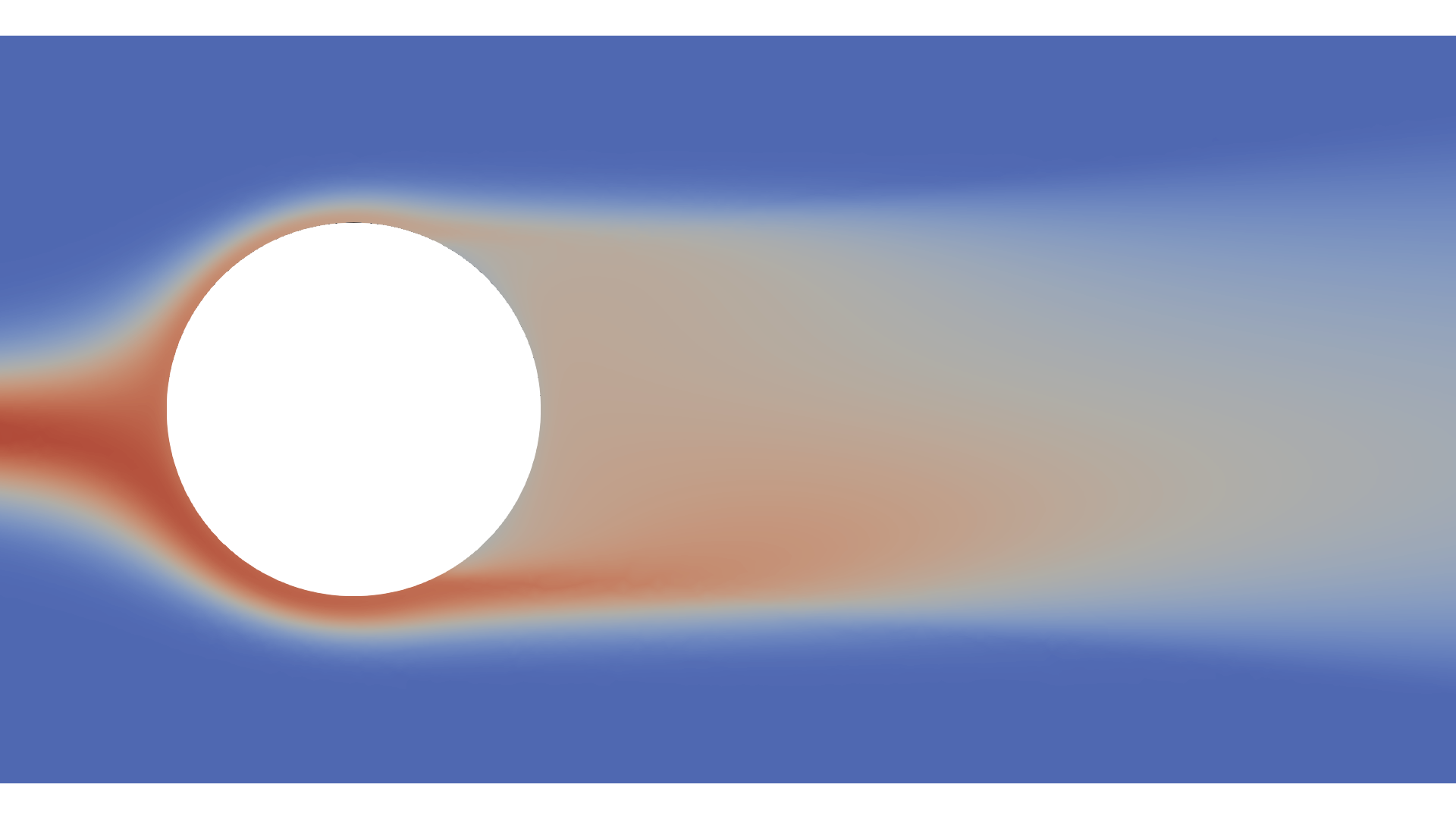}
       \end{subfigure}\\[\baselineskip]
       \begin{subfigure}[b]{\linewidth}
       \vspace{-0.6cm}
	    \includegraphics[width=\textwidth]{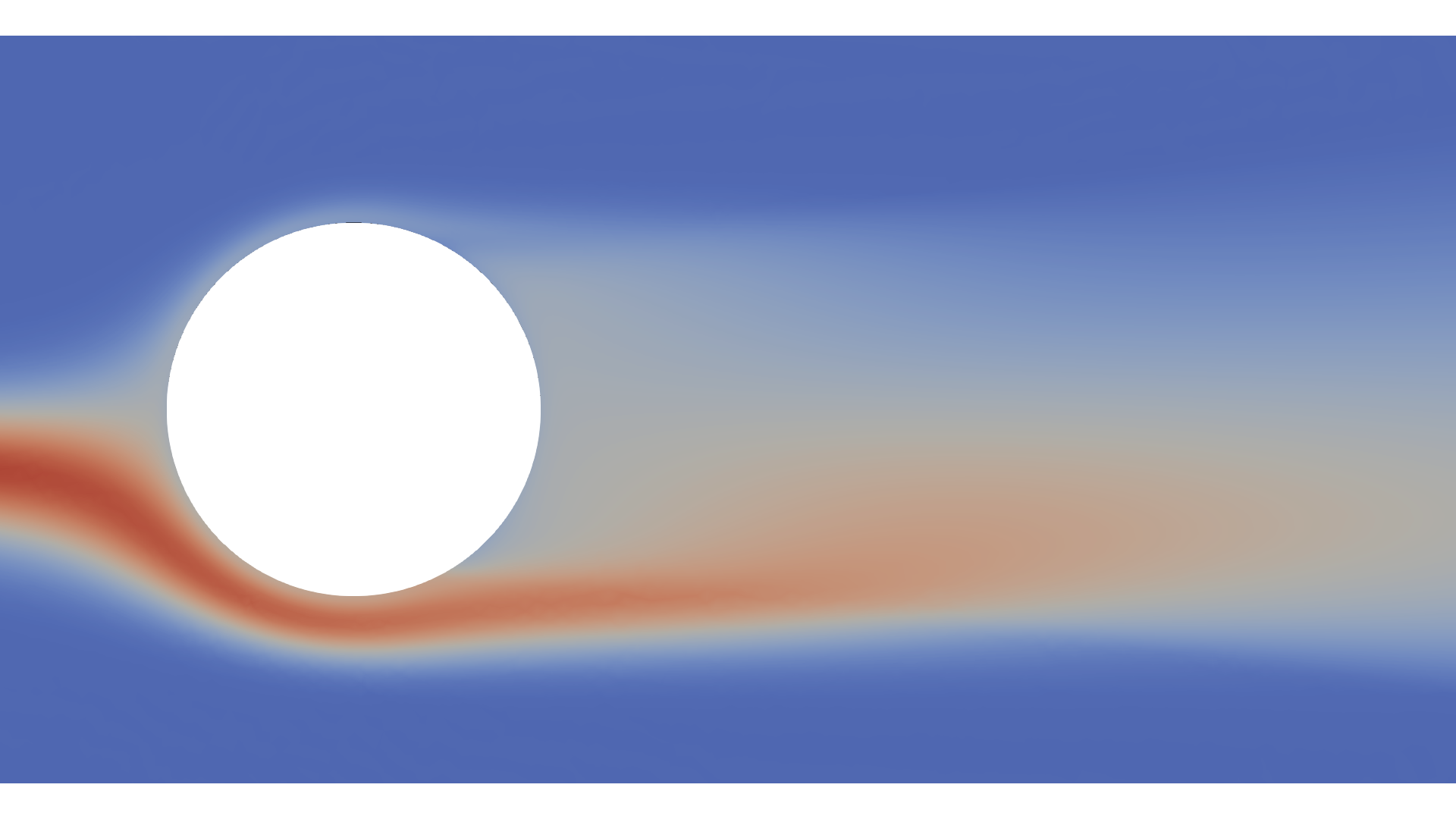}
       \end{subfigure}
     \end{minipage}
     \hfill
     \begin{subfigure}[b]{0.1\textwidth}
	  \includegraphics[width=\textwidth]{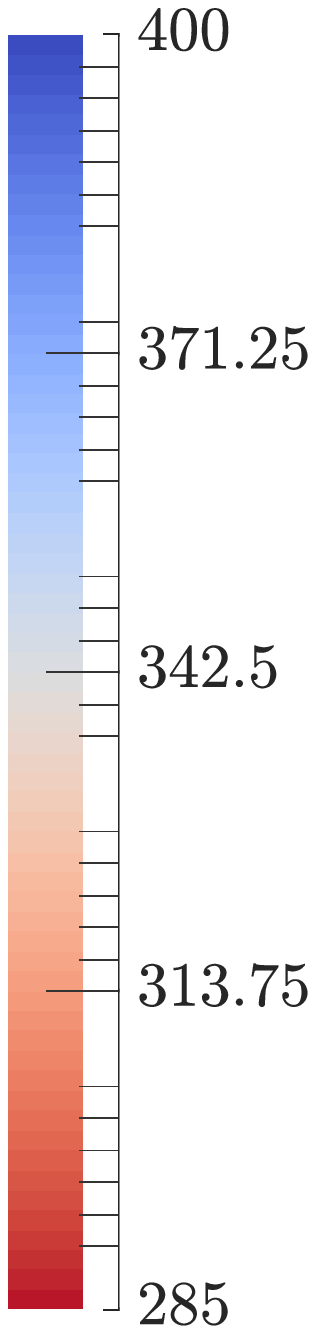}
     \end{subfigure}
    
     \caption{Four realizations of the temperature field about the cylinder in Example II.}
     \label{fig:GT_field}
   \end{figure*}

Setting $n=50$ HF samples and an approximation rank of $r=8$, we compare the mean and variance estimates in Figures \ref{fig:GT_cy_mean_var} and \ref{fig:GT_mid_mean_var}. It is evident in Figure \ref{fig:GT_cy_mean_var} that the BF estimate of the mean and variance of the cylinder surface temperature are superior to the LF and HF, with $n$ samples. In contrast, in Figure \ref{fig:GT_mid_mean_var}, the LF and HF estimates of the variance of temperature along the vertical line at $x=0.2$ are comparable to the BF. One aspect in which the LF model is inferior to the HF is the lack of resolution around the cylinder surface. This may explain the greater utility of the BF approach in estimating cylinder surface temperature as opposed to the vertical line at $x=0.2$.
\begin{figure*}[ht!]
\centering
\includegraphics[width=\textwidth]{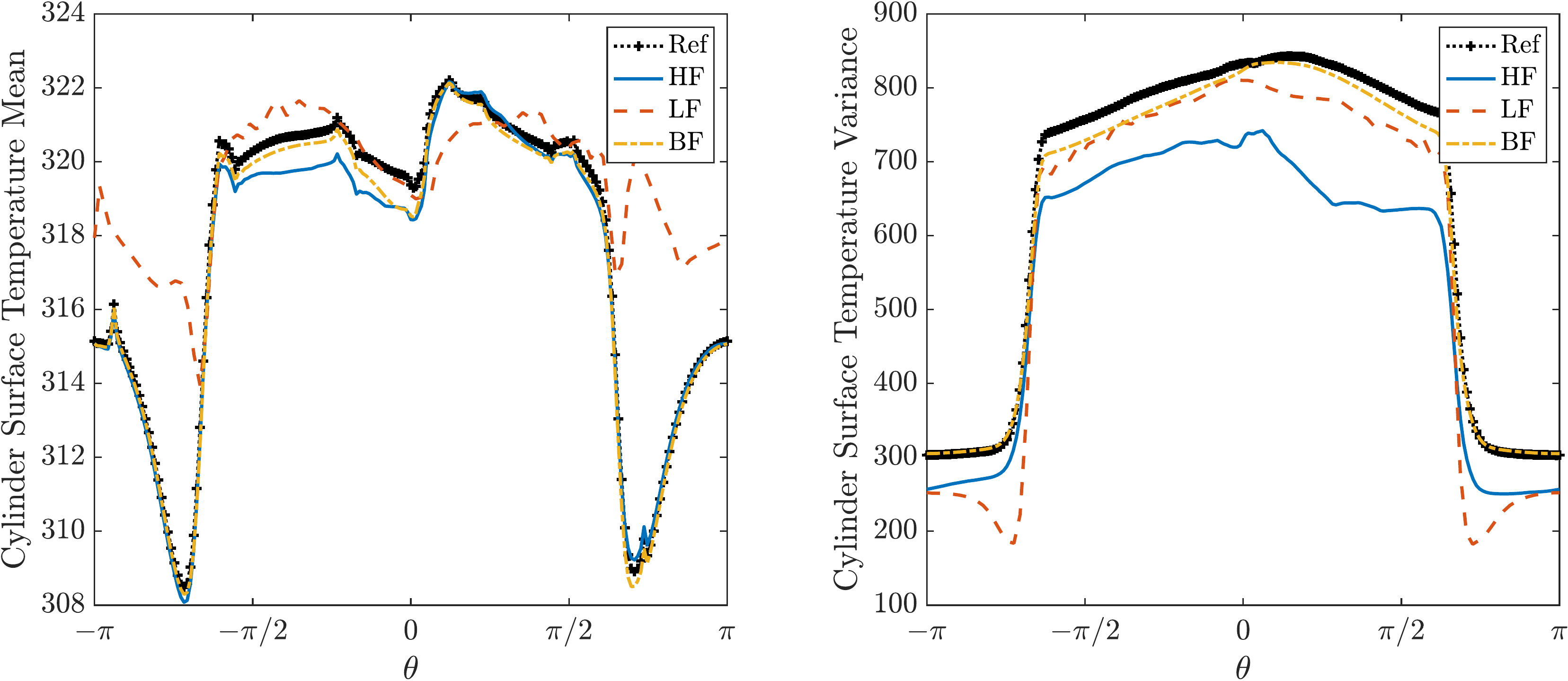}
\caption[Cylinder surface temperature in Example II.]{Cylinder surface temperature in Example II. Shown are (left) mean and (right) variance for rank $r = 8$ with $n=50$ HF samples. The reference solution is given by Ref alongside the HF, LF and BF estimates.}
\label{fig:GT_cy_mean_var}	
\end{figure*}
\begin{figure*}[ht!]
\centering
\includegraphics[width=\textwidth]{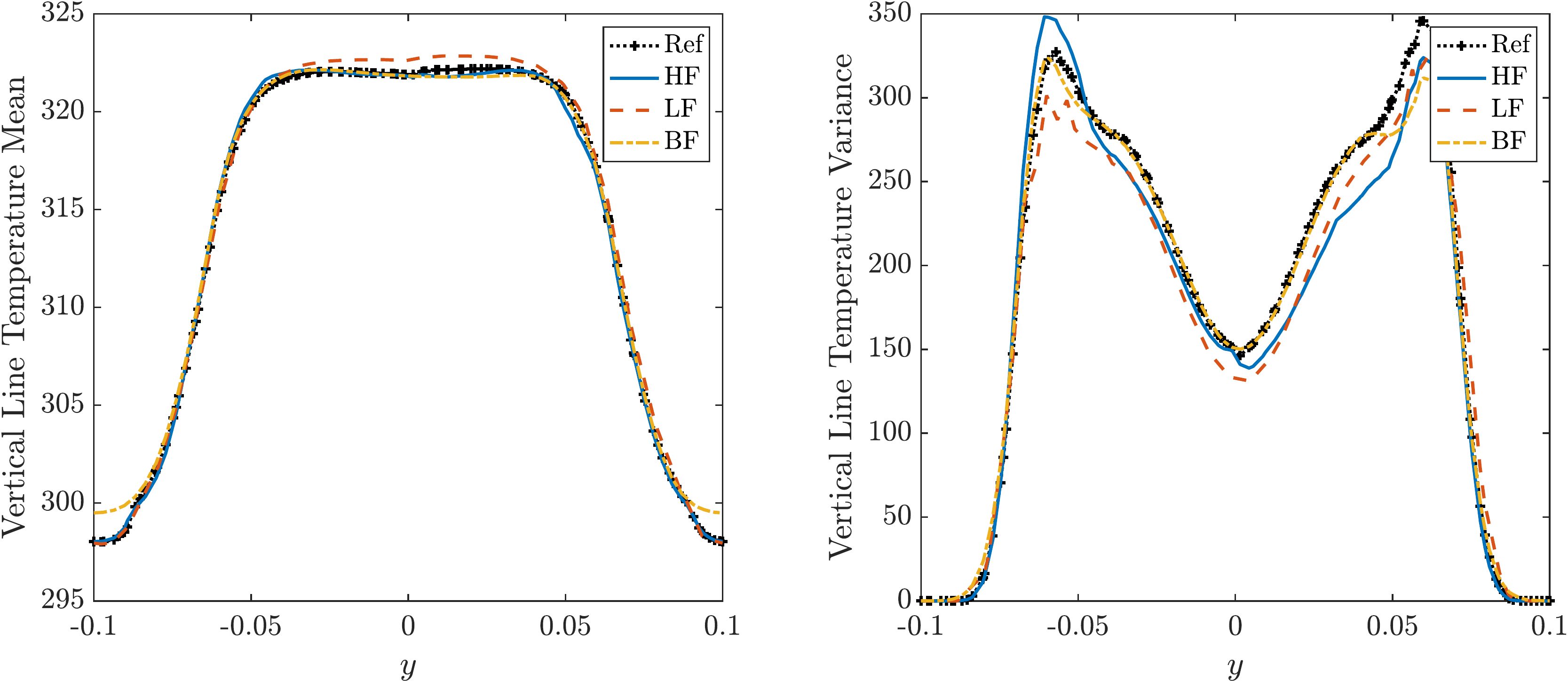}
\caption[Temperature along a vertical line at $x=0.2$ in Example II.]{Temperature along a vertical line at $x=0.2$ in Example II. Shown are (left) mean and (right) variance for rank $r = 8$ with $n=50$ HF samples. The reference solution is given by Ref alongside the HF, LF and BF estimates.}
\label{fig:GT_mid_mean_var}	
\end{figure*}

The normalized eigenvalues of the LF and reference covariance matrices for both QoIs are shown in Figure \ref{fig:GT_Eig}. Both QoIs demonstrate rapid decay in their eigenvalues. The eigenvalues of the LF and reference solution align well for $T_{\text{cylinder}}$ while in $T_{x=0.2}$ a disparity develops for indices greater than $3$. This observation agrees with the results of Figures \ref{fig:GT_BR_Cylinder} and \ref{fig:GT_RB_Mid} in that the reduced basis estimate arising from $T_{\text{cylinder}}$, in which the LF eigenvalue decay corresponds well to its reference solution, provides an accurate estimate of the mean and variance. 
\begin{figure}[ht!]
\centering
\includegraphics[width=0.5\textwidth]{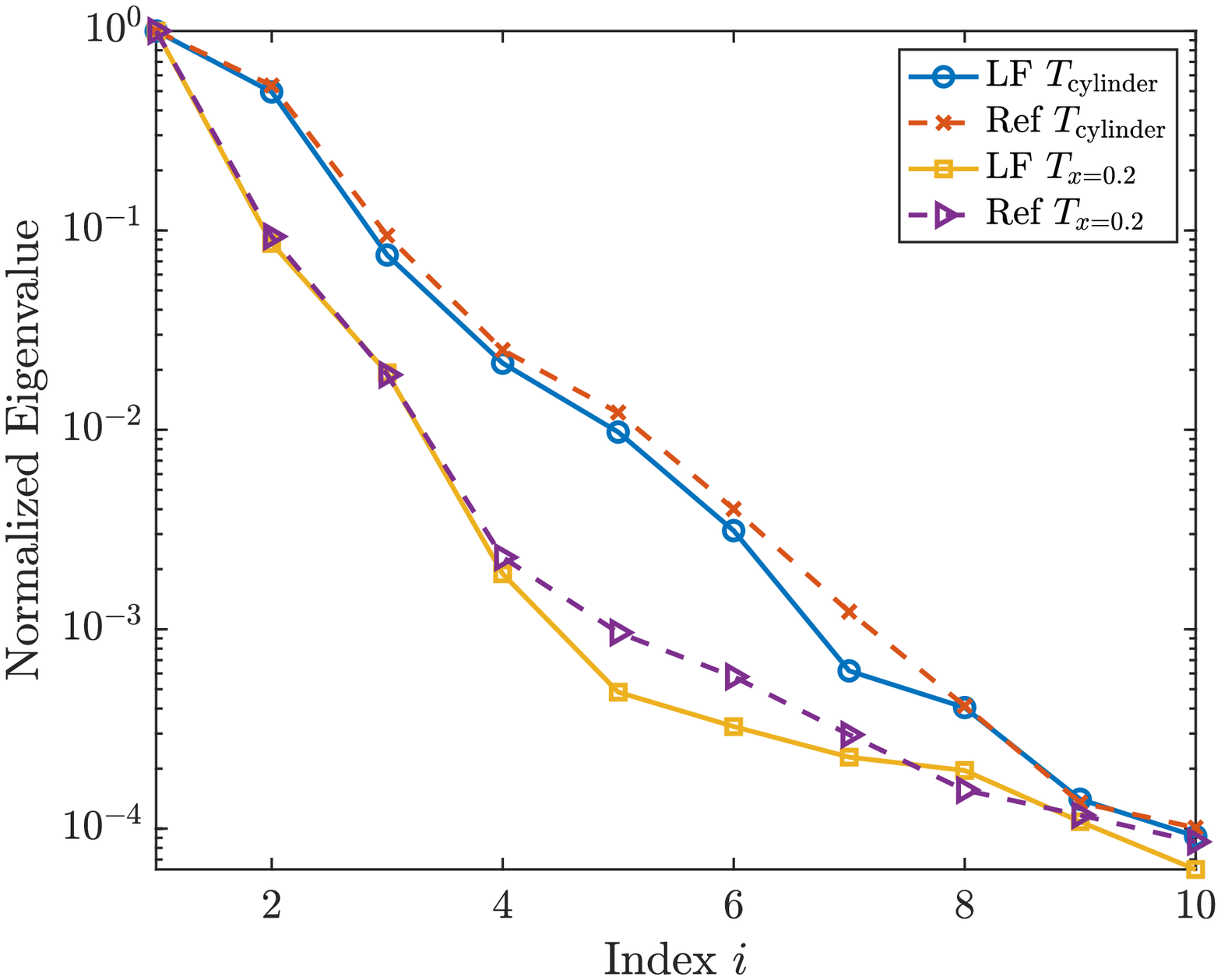}
\caption{Flow past cylinder, normalized eigenvalues of the LF and reference, Ref covariance matrices for both $T_{\text{cylinder}}$ and $T_{x=0.2}$ for Example II.}
\label{fig:GT_Eig}	
\end{figure}

We next look at the performance of the error bound. We set $r=8$, $n = 50$, and $t=2.0$. 
In Table \ref{tab:gt_bound1}, we present the error bound efficacy (\ref{eq:nu_bound_pract})  and associated probability (\ref{eq:prob_sum}). 
We observe that error is tightly bounded with high probability. The bound probabilities for both QoI are the same, and we also note that these matched the pointwise probability computed in (\ref{eq:prob_vector}).  
  \begin{table}[htbp]
      \caption{Practical error probability (\ref{eq:prob_sum}) and bound efficacy (\ref{eq:nu_bound_pract}) in Example II. Results are calculated as the average of $30$ repetitions.}
  \centering
    \begin{tabular}{ccccccc}
    \hline\noalign{\smallskip}
  QoI   & $r$ & $n$ & $\hat{n}$ & $N$ & Prob. (\ref{eq:prob_sum}) & Eff. (\ref{eq:nu_bound_pract})  \\
    \noalign{\smallskip}\hline\noalign{\smallskip}
 Cylinder Surface  & $8$ & $50$ & $50$ & $200$ & $0.910$  & $1.48$\\
 Vertical Line  & $8$ & $50$ & $50$ & $200$ & $0.910$ & $1.65$ \\
    \noalign{\smallskip}\hline
    \end{tabular}%

  \label{tab:gt_bound1}%
\end{table}

In Figures \ref{fig:GT_cylinder} (a) and \ref{fig:GT_mid} (a),  we present the pointwise reference average and error bound (\ref{eq:nu_bound_i_pract}) for the cylinder surface and vertical line, respectively. For both QoI the practical bound follows the shape of the reference error and is within a factor of two. 
 
Figures \ref{fig:GT_cylinder} (b) and \ref{fig:GT_mid} (b) depict the efficacy for a given $(n,r)$ pair as the average of $30$ repetitions for cylinder surface and vertical line. Where few HF samples are available we see the bound does not perform well, but as the number of HF samples increases the efficacy is greater than $1$, as necessary. We also observe that the efficacy is tighter for smaller values of $r$. The efficacy of the practical error bound for both QoI does not exceed more than three, demonstrating the utility of (\ref{eq:nu_bound_pract}) to determine the eligibility of a LF data-set for BF estimation.
\begin{figure*}[ht!]
\centering
\begin{subfigure}[t]{0.45\textwidth}
  \centering
  \includegraphics[width=\textwidth]{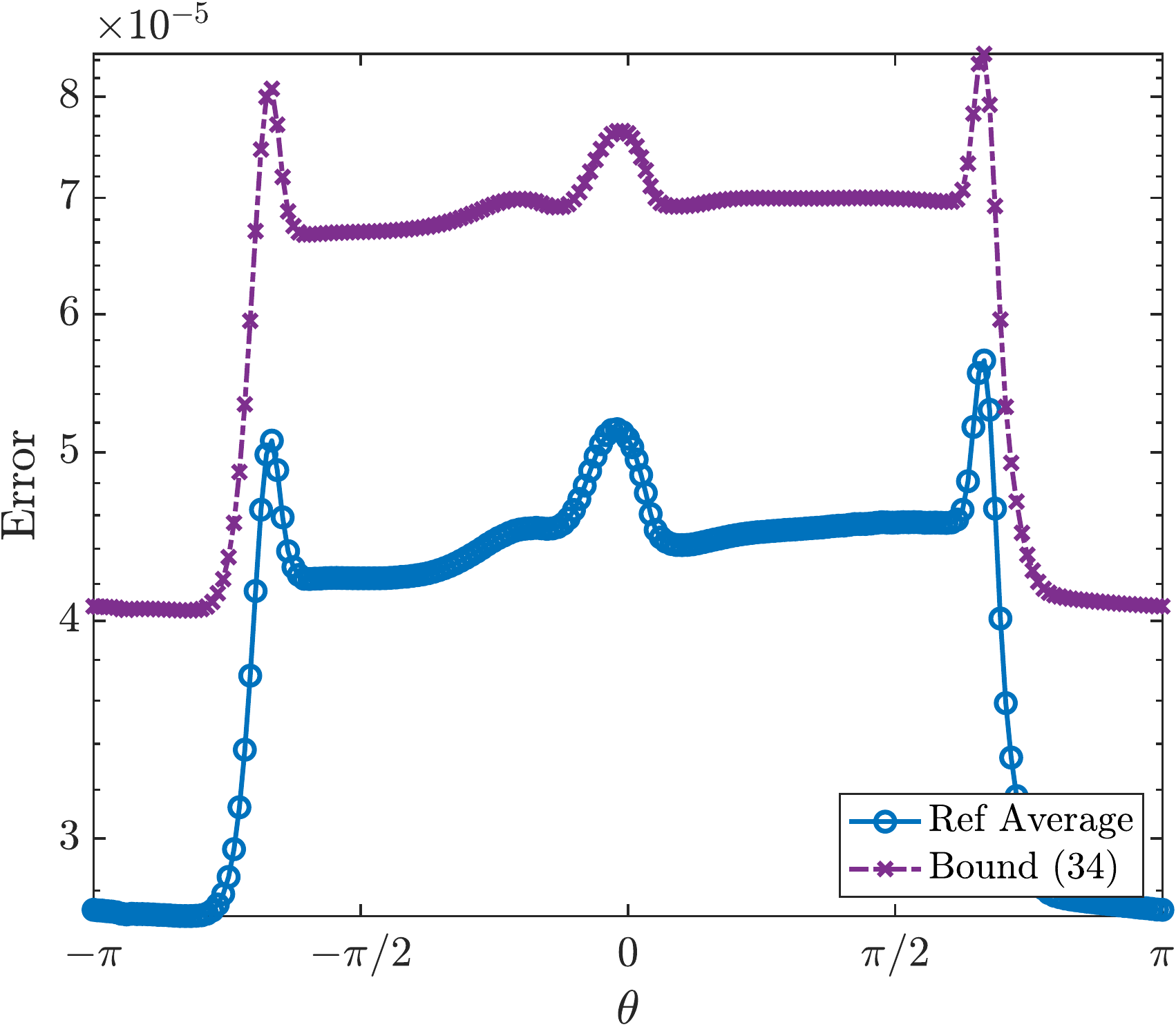}
  \caption{Pointwise true error and error bound (\ref{eq:nu_bound_i_pract}).  }
  \label{fig:GT_cylinder_bound}
\end{subfigure}
\hfill
\begin{subfigure}[t]{0.45\textwidth}
  \centering
  \includegraphics[width=\textwidth]{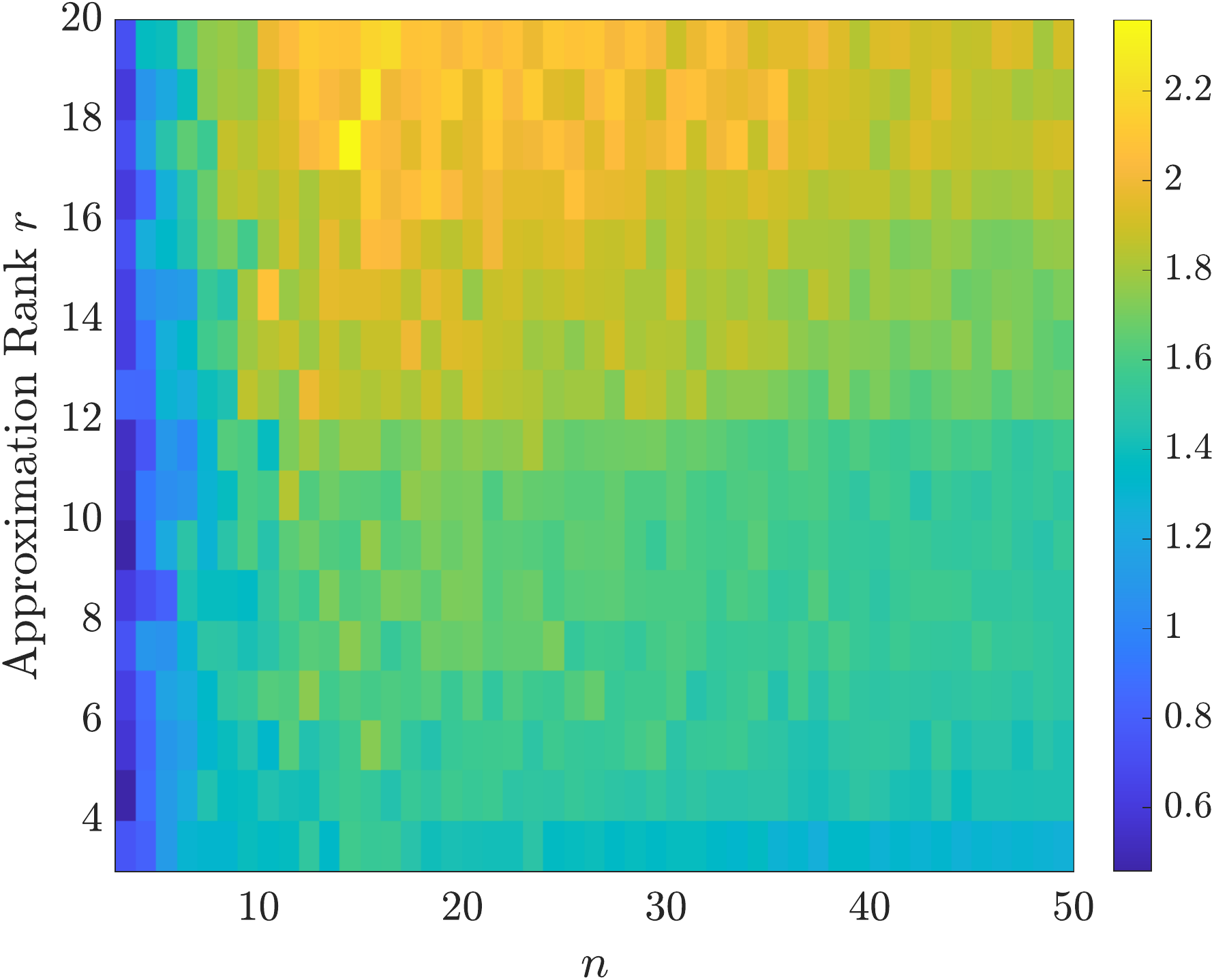}
  \caption{Practical error bound efficacy.}
  \label{fig:GT_cylinder_efficacy}
\end{subfigure}
 \caption[Practical bound performance for the cylinder surface temperature of Example II.]{Practical bound performance for the cylinder surface temperature of Example II. We present the average calculated from $30$ repetitions, where each repetition includes the estimate of $N=100$ samples. In (a) we use $n = 20$ HF samples and rank $r= 8$. In (b) the efficacy is calculated as the (average) ratio of the error estimated from the practical error bound (\ref{eq:nu_bound_pract}) and the true error for different approximation ranks $r$ and number of HF samples $n$.  We use the same $n$ HF samples used to build the BF approximation to compute the bound.}
\label{fig:GT_cylinder}
\end{figure*}
\begin{figure*}[ht!]
\begin{subfigure}[t]{0.45\textwidth}
  \centering
  \includegraphics[width=\textwidth]{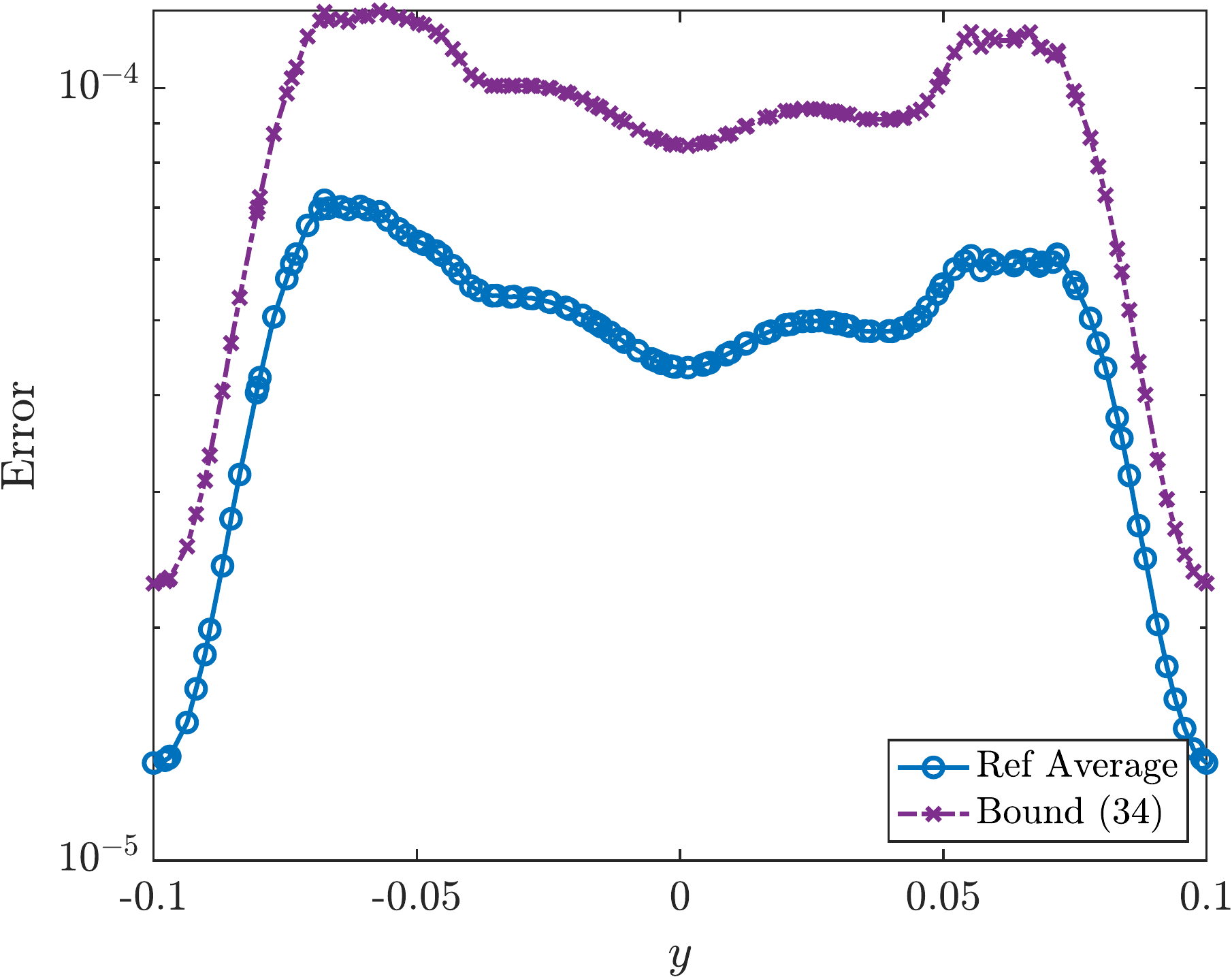}
  \caption{Pointwise true error and error bound (\ref{eq:nu_bound_i_pract}).  }
  \label{fig:GT_mid_bound}
\end{subfigure}
\hfill
\begin{subfigure}[t]{0.45\textwidth}
  \centering
  \includegraphics[width=\textwidth]{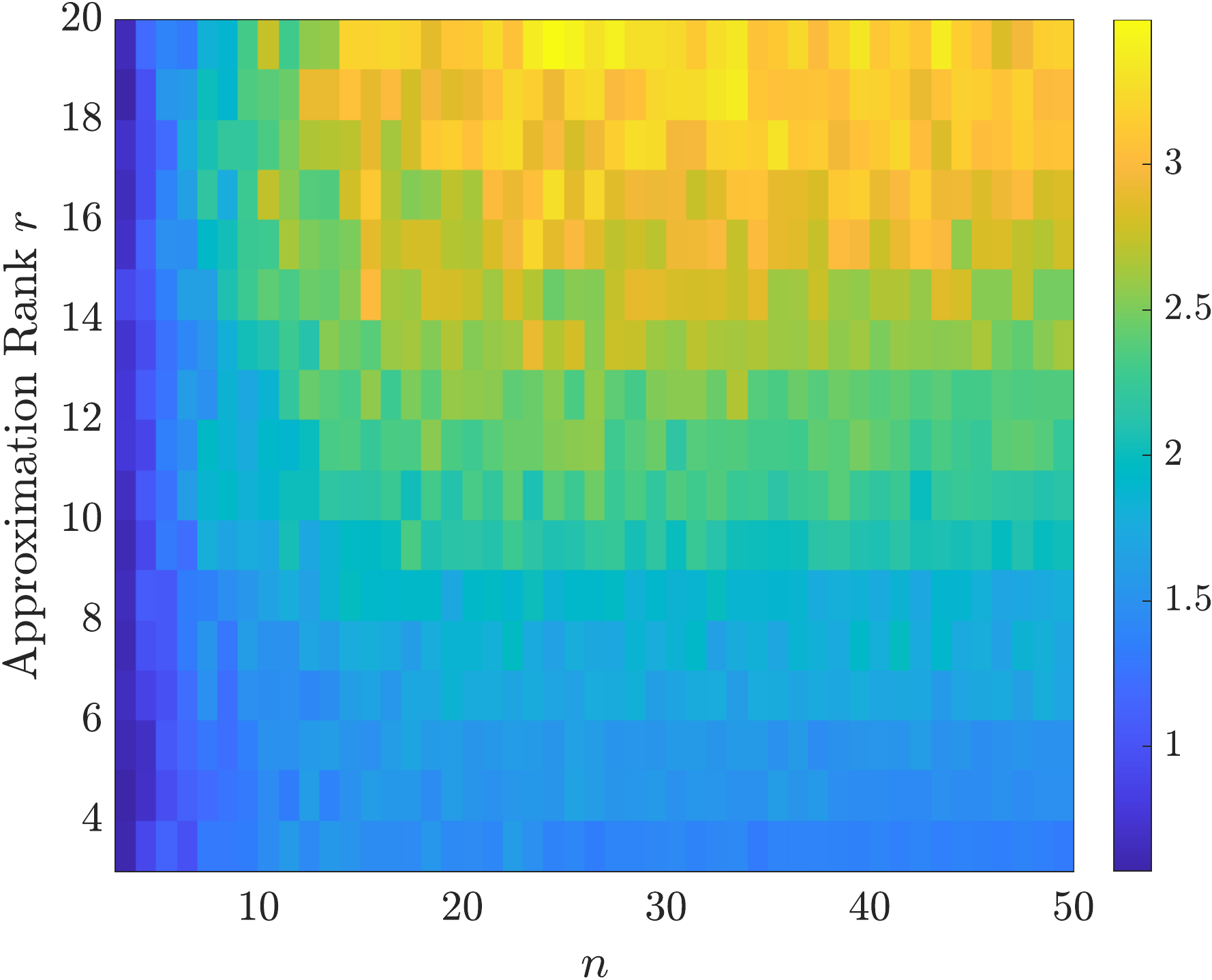}
  \caption{Practical error bound efficacy.}
  \label{fig:GT_mid_efficacy}
\end{subfigure}
 \caption[Error bound performance for the vertical line temperature of Example II.]{Error bound performance for the vertical line temperature of Example II. We present the average calculated from $30$ repetitions, where each repetition includes the estimate of $N=100$ samples. In (a) we use $n = 50$ HF samples and rank $r= 8$. In (b) the practical bound efficacy is calculated as the (average) ratio of the error estimated from the practical error bound (\ref{eq:nu_bound_pract}) and the true error for different approximation ranks $r$ and number of HF samples $n$.  We use the same $n$ HF samples used to build the BF approximation to compute the bound.}
 \label{fig:GT_mid}
\end{figure*}

\subsection{Example III: flow around NACA airfoil} 
\label{sec:case_3_airfoil}

The final example we consider is a NACA $4412$ airfoil solved with a Reynolds number of $1.52 \times 10^6$ at a low angle-of-attack (AoA) as presented in \cite{skinner2019reduced}. An airfoil of chord length $1.0 \: \text{m}$ {\color{black}is} modeled in a computational domain of length $999 \: \text{m}$ in the stream-wise ($x$-direction), vertical height $998 \: \text{m}$ ($y$-direction), and span-wise width $2 \: \text{m}$ ($z$-direction). The domain and accompanying boundary conditions are specified in Figure \ref{fig:Airfoil_schematic_pc} following the work of \cite{diskin2015grid}. The red line on the boundary indicates inflow, while the blue indicates outflow. The $\pm y$ and $\pm z$ boundaries are set as inviscid and impenetrable to model a two-dimensional problem. The airfoil surface is subject to a no-slip condition. We examine variations in the model geometry, varying the maximum camber $m$, the position of maximum camber $p$ and the maximum thickness $t$ from their nominal 4412 airfoil values. We also vary the angle of attack (AoA). All parameters are modeled by uniform random variables as summarized in Table \ref{tab:airfoil_pc}.
\begin{figure*}[ht!]
\centering
\includegraphics[width=\textwidth]{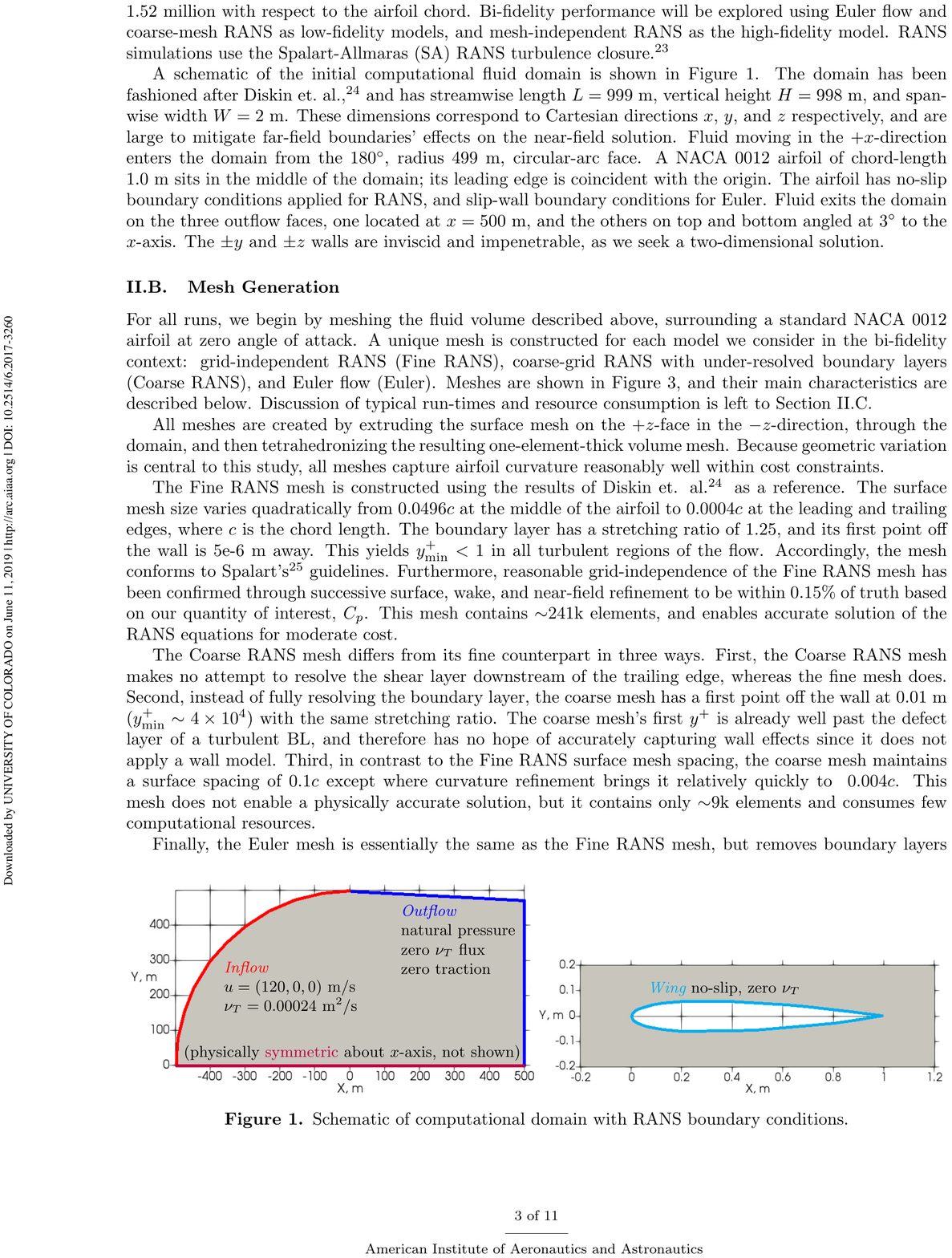}
\caption[Schematic of the computational domain, initial NACA 0012 airfoil and accompanying boundary conditions used in Example III.]{Schematic of the computational domain, initial NACA 0012 airfoil and accompanying boundary conditions used in Example III. The geometry is deformed to map into a NACA series airfoil, with 4412 defining the nominal geometry. The inflow velocity is given by $u$ and $\nu_T$ is the kinematic turbulence viscosity. Figure is adapted from \cite{skinner2019reduced}.}
\label{fig:Airfoil_schematic_pc}	
\end{figure*}
  \begin{table}[htbp]
  \centering
      \caption{Uncertain input variables for Example III. }
    \begin{tabular}{cccc}
    \hline\noalign{\smallskip}
     Parameter  & Symbol  & Distribution\\
    \noalign{\smallskip}\hline\noalign{\smallskip}
\text{Maximum camber}  & $m$  & $U [0.032, 0.048]$ \\
\text{Location of max camber}  & $p$ & $U [0.32, 0.48]$ \\
\text{Thickness}  & $t$ &  $U [0.096, 0.144]$\\
\text{Angle of Attack}  & $\alpha$  & $U [0^{\circ}, 6^{\circ}]$  \\
    \noalign{\smallskip}\hline
    \end{tabular}%
  \label{tab:airfoil_pc}%
\end{table}

The QoI considered is the coefficient of pressure, $C_p$, on the surface of the airfoil calculated as
\begin{equation}
C_p = \frac{p - p_\infty}{\frac{1}{2} \rho V_\infty^2},
\end{equation}
where $p$ is the pressure at specific point on the airfoil surface, while $p_\infty$, $\rho_\infty$, and $V_\infty$ are the pressure, density and velocity of the free stream flow. To mitigate bias in the QoI from a concentration of points at the leading and trailing edges, we interpolate $C_p$ onto $200$ evenly spaced points a\hyp{}round the airfoil surface prior to applying the BF method. 

As in Example II, the simulation is solved using the RANS equations in \texttt{PHASTA} \cite{whiting2001stabilized}, with the Spalart-Allmaras (SA) turbulence closure model \cite{spalart1992one, pope2001turbulent}. The LF and HF meshes are depicted in Figures \ref{fig:Airfoil_mesh_pc} (a) and (b), respectively. The LF mesh consists of $9{,}000$ elements while the HF mesh is refined quadratically from near the airfoil to the simulation domain boundaries and has $241{,}000$ elements. In contrast to the LF model, the HF model adequately resolves the shear layer downstream of the airfoil and wall effects. The computational expense of the LF model, is however, $498$ times smaller than its HF counterpart. For further detail on the airfoil model implementation please refer to \cite{skinner2019reduced}. 
\begin{figure*}[ht!]
\centering
\begin{subfigure}{0.45\textwidth}
  \centering
  \includegraphics[width=\textwidth]{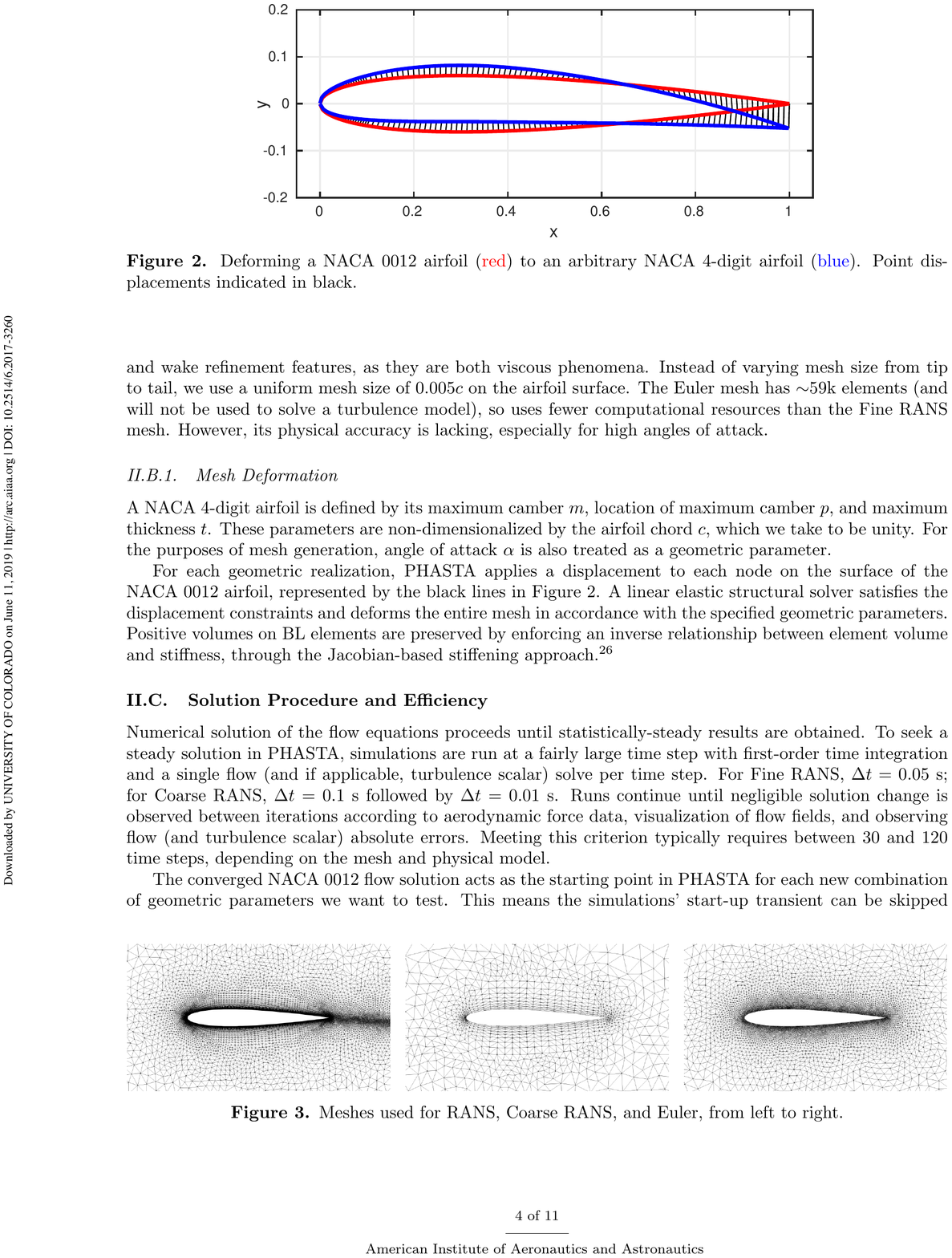}
  \caption{LF mesh.}
  \label{fig:Airfoil_meshL_pc}
\end{subfigure}
\begin{subfigure}{0.45\textwidth}
  \centering
  \includegraphics[width=\textwidth]{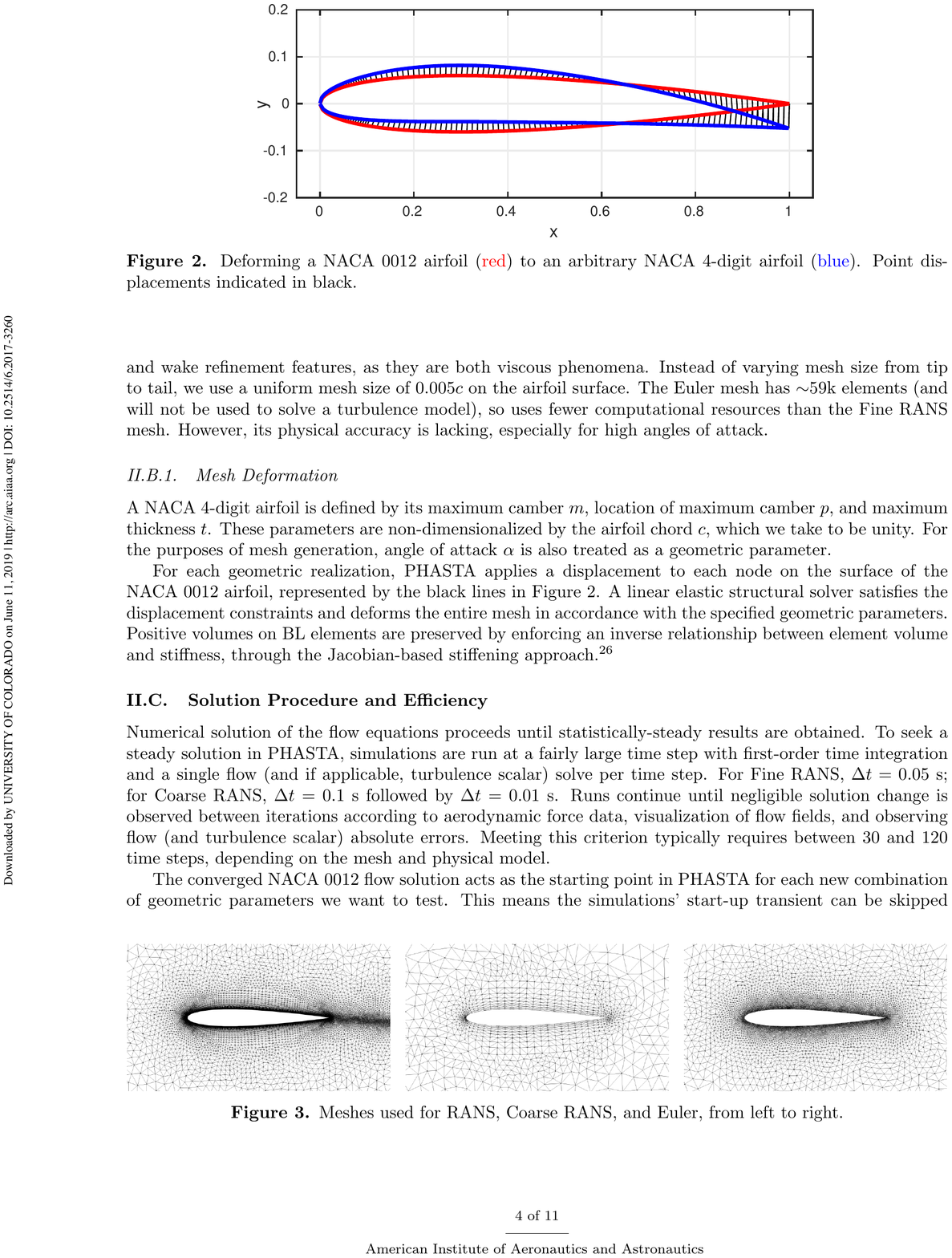}
  \caption{HF mesh.}
  \label{fig:Airfoil_meshH_pc}
\end{subfigure}
\caption[Computational mesh (a) LF model and (b) HF model for the NACA 0012 airfoil used in Example III.]{Computational mesh (a) LF model and (b) HF model for the NACA 0012 airfoil used in Example III. The geometry is deformed to map into a NACA series airfoil. Figure is adapted from \cite{skinner2019reduced}.} 
\label{fig:Airfoil_mesh_pc}
\end{figure*}
\subsubsection{Results}
\label{sec:airfoil_results}

We consider $N=500$ LF and HF samples. The average relative error in the mean and variance with a PC expansion of order $p=5$ is plotted in Figure \ref{fig:Air_BR}. At $r=3$ the BF approach provides a limited advantage over the HF and LF estimates. Accuracy of the BF solution saturates past $r=8$, implying that additional basis functions are not contributing novel information to the BF approximation. The BF estimate, however, performs better than the HF estimate for low $n$, particularly in the estimation of variance. The full PC expansion requires $P = 462$ terms.
\begin{figure*}[ht!]
\centering
\includegraphics[width=\textwidth]{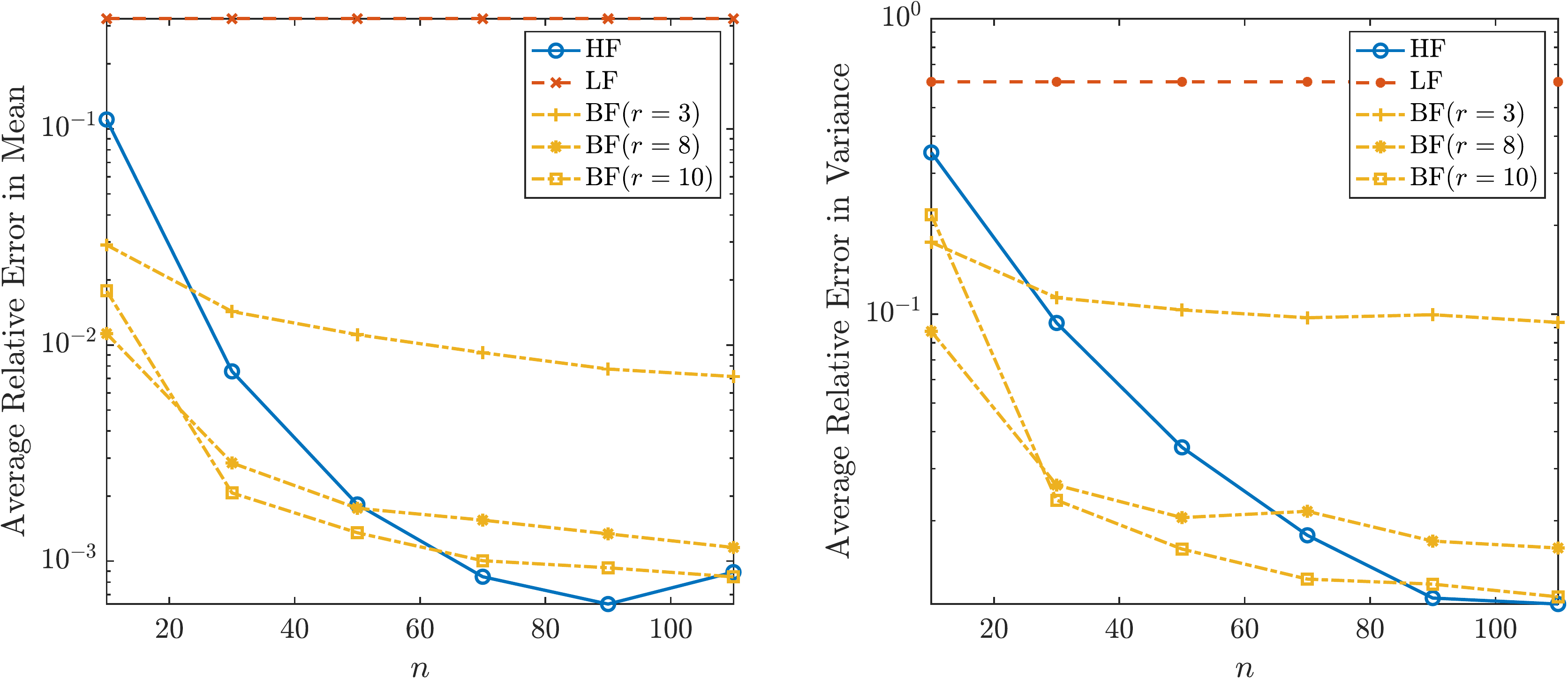}
\caption[The average relative error in (left) the mean, and (right) variance of $C_p$ estimation on the airfoil surface for Example III.]{The average relative error in (left) the mean, and (right) variance of $C_p$ estimation on the airfoil surface for Example III. Plotted are the HF, LF, and BF estimates for approximation rank $r$.}
\label{fig:Air_BR}	
\end{figure*}

Using an approximation rank of $r =8$ with a set of $n=20$ HF samples, we compare estimates of the mean and variance of the pressure coefficient as a function of normalized airfoil location in Figure \ref{fig:airfoil_mean_var}. Following \cite{skinner2019reduced}, the ``location on airfoil" is calculated as the stream-wise distance from the airfoil's trailing edge. Clockwise movement about the airfoil is defined as the positive direction, so positive ``location on airfoil" values correspond to the pressure surface and negative values to the suction surface. 
\begin{figure*}[ht!]
\centering
\includegraphics[width=\textwidth]{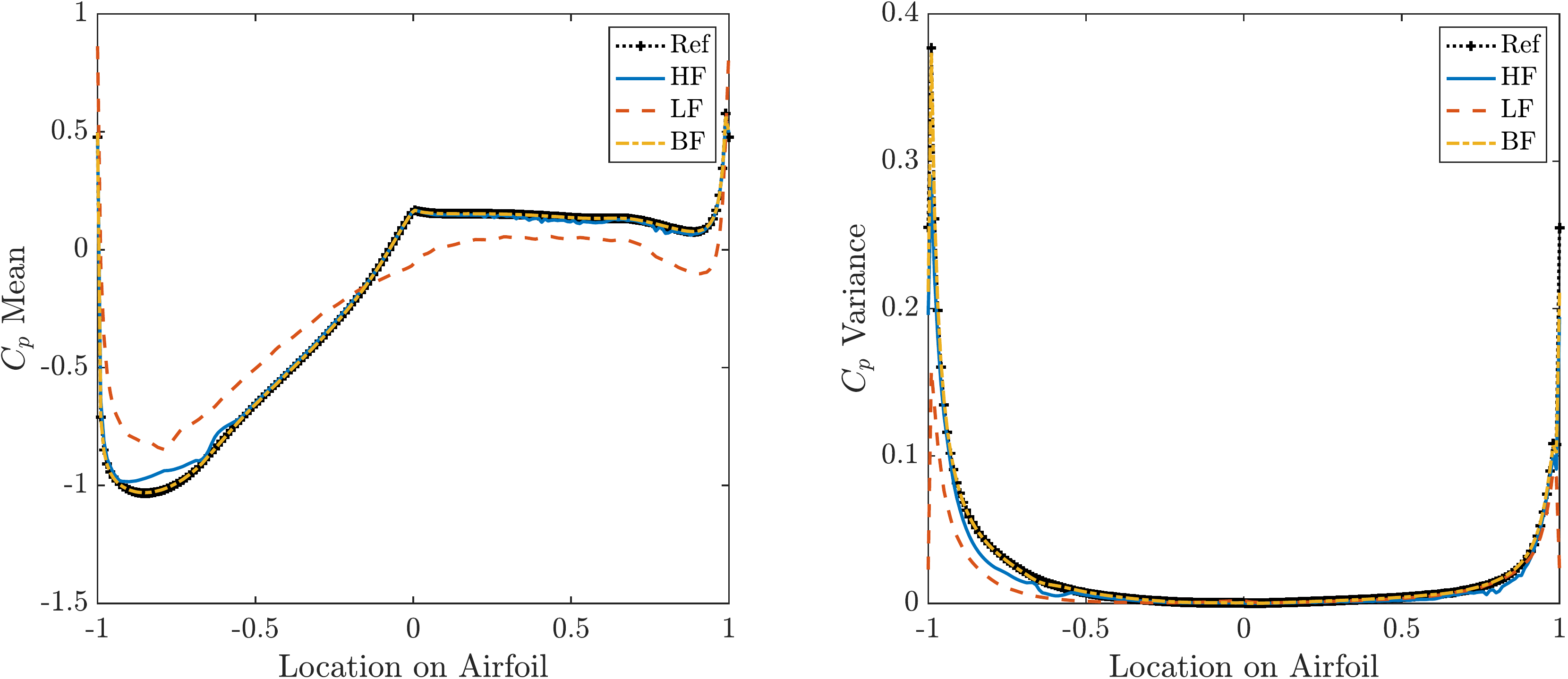}
\caption[Pressure coefficient on the NACA airfoil surface in Example III.]{Pressure coefficient on the NACA airfoil surface in Example III. Shown are the (left) mean and (right) variance for rank $r = 8$ with $n=20$ HF samples. The reference solution is given by Ref alongside the HF, LF and BF estimates.}
\label{fig:airfoil_mean_var}	
\end{figure*}
In Figure \ref{fig:airfoil_mean_var} the improvement from LF to BF is obvious, but the distinction between the HF and BF estimates is not possible to discern. Consulting Figure \ref{fig:Air_BR}, we find that although the BF estimate of the mean is more accurate than the HF, both have small relative errors less than $10^{-2}$ for $n=20$ and $r=8$. The distinction in variance estimates is closer to the order $10^{-1}$ but still difficult to discern in  Figure \ref{fig:airfoil_mean_var}, with the only noticeable deviation occurring in the airfoil location $ -1$ to $-0.5$. 

The eigenvalues of the LF and reference QoI covariance matrices are plotted in Figure \ref{fig:Air_Eig}. The eigenvalues of the LF and reference solutions align well and decay rapidly. The rapid decay indicates this problem is a suitable candidate for SMR, as corroborated in Figures \ref{fig:Air_BR} and \ref{fig:airfoil_mean_var}. Further, for indices greater than seven, there is some disagreement between the reference and LF eigenvalues, indicating that the corresponding basis functions may have a reduced influence on the accuracy of SMR. 
  \begin{figure}[ht!]
\centering
\includegraphics[width=0.5\textwidth]{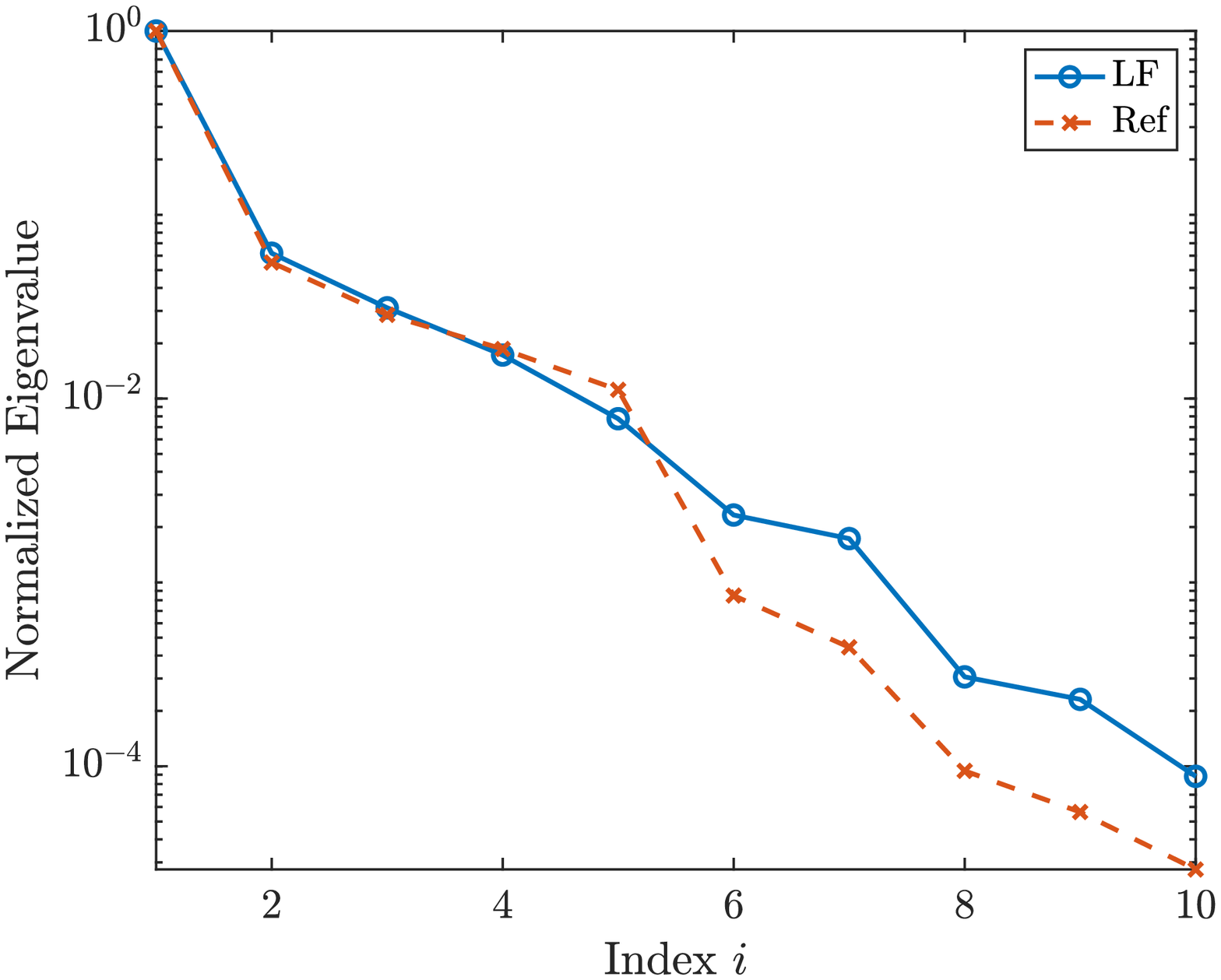}
\caption{Airfoil normalized eigenvalues of the LF and reference, Ref covariance matrices of airfoil surface $C_p$ for Example III.}
\label{fig:Air_Eig}	
\end{figure}

We next look at the performance of the practical error bound. We set $r=8$, $n = 20$ and $t=2.0$. 
In Table \ref{tab:airfoil_bound1} we present the efficacy of the practical error bound (\ref{eq:nu_bound_pract})  and associated probability (\ref{eq:prob_sum}). 
Similar to the preceding examples, we find that error is tightly bounded with high probability. We also note that the probabilities (\ref{eq:prob_sum}) and (\ref{eq:prob_vector}) are calculated to be equal. 
  \begin{table}[htbp]
  \centering
      \caption{Practical error probability (\ref{eq:prob_sum}) and bound efficacy (\ref{eq:nu_bound_pract}) in Example II. Results are calculated as the average of $30$ repetitions.}
    \begin{tabular}{ccccccc}
    \hline\noalign{\smallskip}
  QoI   & $r$ & $n$ & $\hat{n}$ & $N$ & Prob. (\ref{eq:prob_sum}) & Eff. (\ref{eq:nu_bound_pract})  \\
    \noalign{\smallskip}\hline\noalign{\smallskip}
 $C_p$  & $8$ & $20$ & $20$ & $500$ & $0.891$  & $1.40$\\
    \noalign{\smallskip}\hline
    \end{tabular}%

  \label{tab:airfoil_bound1}%
\end{table}
Figure \ref{fig:airfoil} (a)  presents the pointwise reference error and error bound (\ref{eq:nu_bound_i_pract}). The bound captures the shape of the pointwise error very well, remaining tight across all points of the airfoil. Figure \ref{fig:airfoil} (b) illustrates the efficacy for a given $(n,r)$ pair as the average of $30$ repetitions. Consistent with the lid-driven cavity and gas turbine efficacy plots, we find there is a correlation between the number of HF samples $n$ and the efficacy of the error bound. Interestingly, a relationship is also evident between the rank $r$ and the error bound efficacy, namely errors at lower ranks may be accurately estimated via few HF samples. For all $(n,r)$ pairs the efficacy of the practical error bound for both QoI does not exceed more than two implying that error bound (\ref{eq:nu_bound_pract}) is a useful tool to calibrate the BF estimate. 
\begin{figure*}[ht!]
\centering
\begin{subfigure}{0.45\textwidth}
  \centering
  \includegraphics[width=\textwidth]{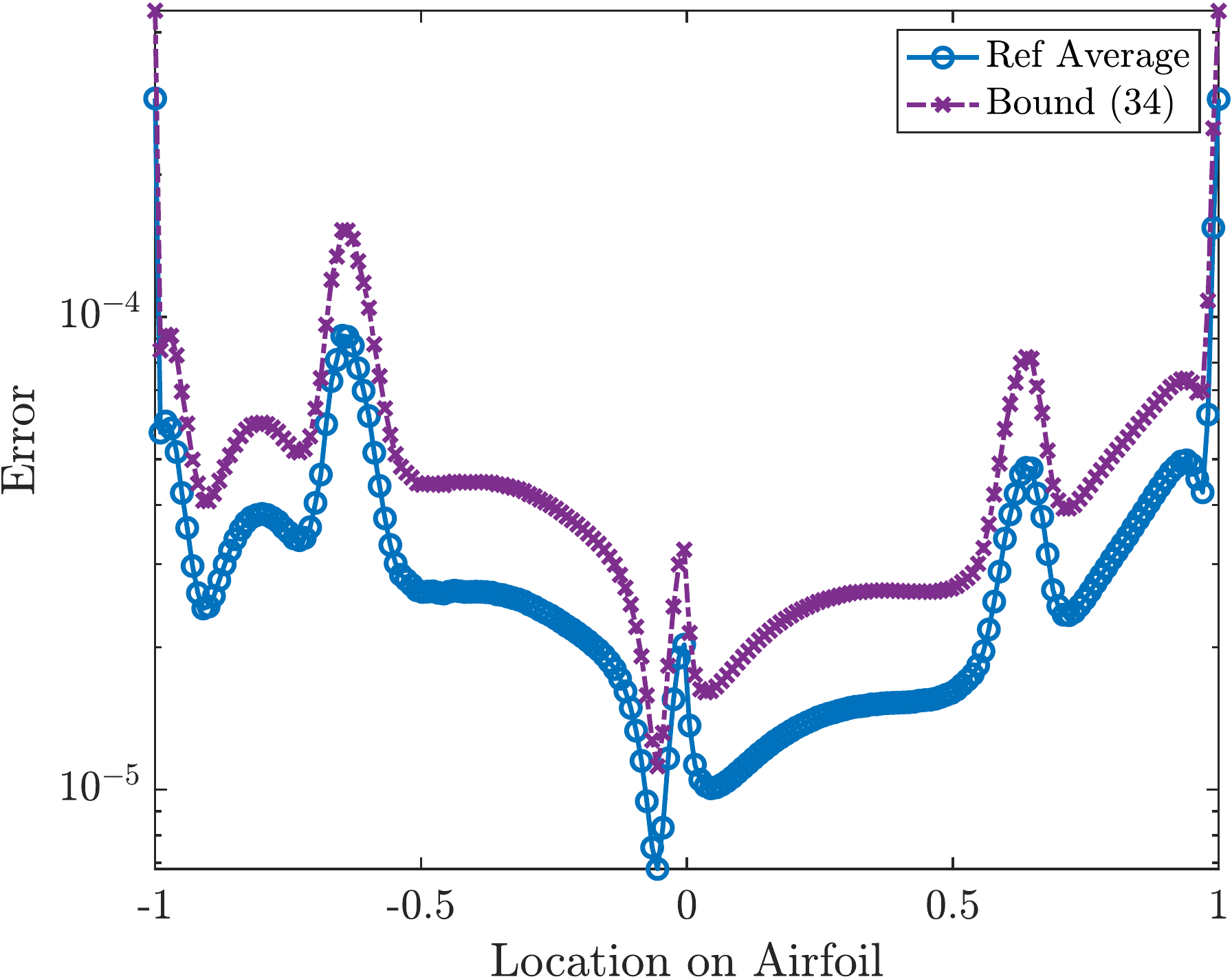}
  \caption{Pointwise true error and error bound (\ref{eq:nu_bound_i_pract}).  }
  \label{fig:airfoil_bound}
\end{subfigure}
\hfill
\begin{subfigure}{0.45\textwidth}
  \centering
  \includegraphics[width=\textwidth]{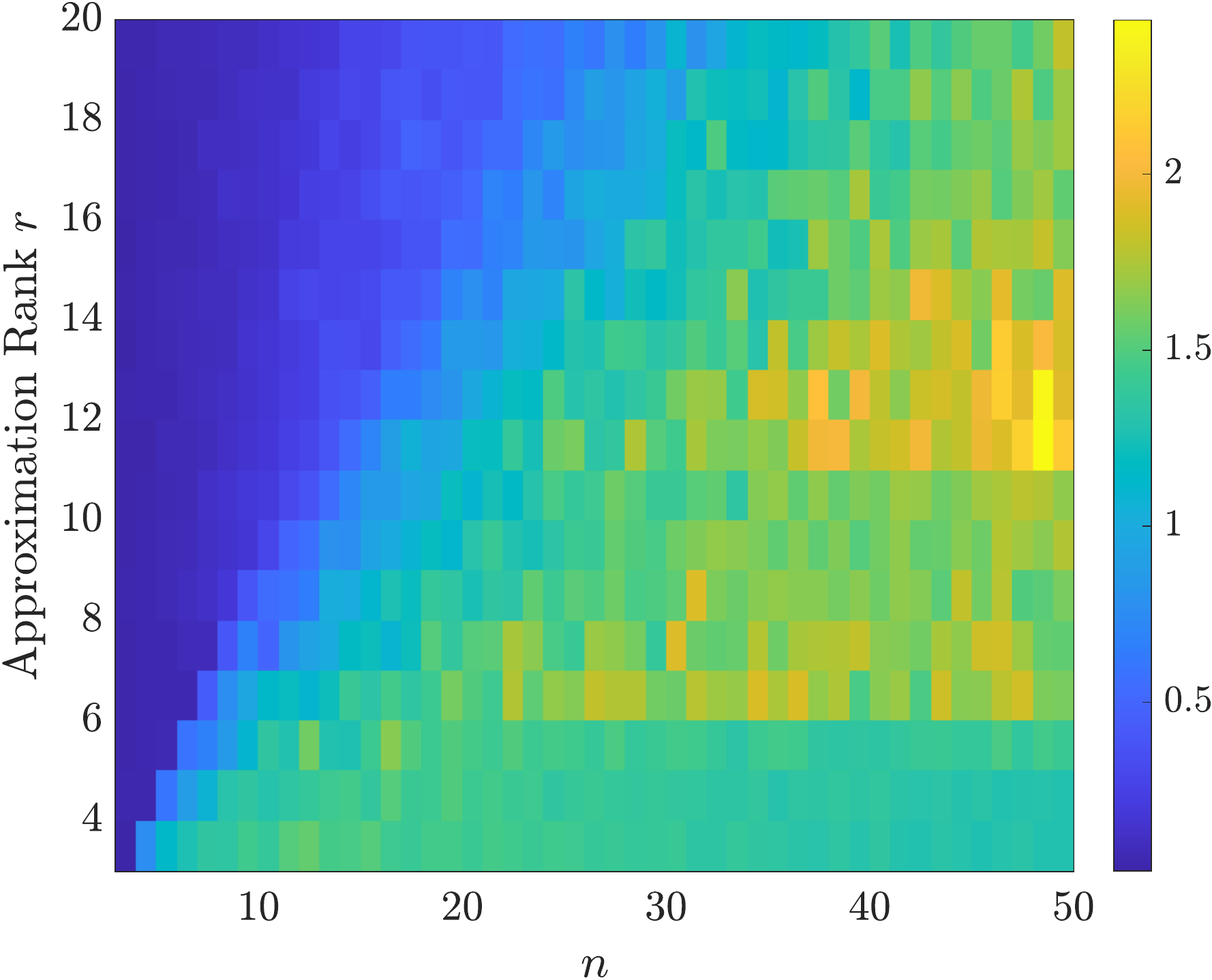}
  \caption{Practical error bound efficacy.}
  \label{fig:airfoil_efficacy}
\end{subfigure}
 \caption[Error bound performance for the airfoil of Example III.]{Error bound performance for the airfoil of Example III. We present the average calculated from $30$ repetitions, where each repetition includes the estimate of $N=500$ samples. In (a) we use $n = 20$ HF samples and rank $r= 8$. In (b) the practical bound efficacy is calculated as the (average) ratio of the error estimated from the practical error bound (\ref{eq:nu_bound_pract}) and the true error for different approximation ranks $r$ and number of HF samples $n$. }
 \label{fig:airfoil}
\end{figure*}
\section{Conclusions}
\label{sec:conclusion_pc}

{\color{black}In this work, we present a BF stochastic model reduction (SMR) approach to approximate the solutions to PDEs with parametric or stochastic inputs. The BF estimate is obtained through forming a PC expansion of the LF solution of interest, truncating a KL expansion of the LF PC expansion to form a reduced basis before regressing a limited number of expensive HF samples against this reduced basis. Once built, this regression model can be employed to generate solution statistics.

We develop two types of novel error bounds for the BF SMR approximation. The first error estimate identifies a requirement on a pair of LF and HF models towards a successful BF approximation. We also present a second error bound from  a practical perspective to estimate the SMR error using a limited number of HF samples.  To the best of our knowledge these are the first error bounds that address this SMR algorithm.

We demonstrate the effectiveness of the SMR approach along with the efficacy of the practical error estimate in three numerical examples. In the first example, application of the BF approach to the lid-driven cavity achieves an order of magnitude better accuracy while maintaining the same computational expense. In the second example of heated flow past a cylinder we observe similar improvement in the estimation of the temperature on the cylinder surface, and limited improvement in temperature along a vertical line at $x=0.2$, demonstrating the choice of QoI to be an important factor in the BF SMR approach. In the third example modeling geometric variability of an airfoil, we again observe the successful application of the BF approach. In addition, we find in all three examples that evaluation of the practical error bound using a limited number of HF samples provides a useful and accurate estimation of the BF SMR error. }

Our future research includes employing optimal sampling strategies to enable more intelligent selection of HF samples when generating the BF estimates and the extension of SMR to account for multiple levels of model fidelity. Additionally, LF model design, where a LF model is calibrated to achieve the best possible BF estimate, is an interesting line of inquiry. 

\section*{Acknowledgments} 
The work of FN was supported by NSF grants 1740330 and 2028032. JH has received funding from the European Union's Horizon 2020 research and innovation programme under the Marie Sk\l{}odowska-Curie grant agreement No 712949 (TECNIOspring PLUS) and from the Agency for Business Competitiveness of the Government of Catalonia. The work of AD was also supported by the AFOSR grant FA9550-20-1-0138 and NSF grant 1454601. 

\section{Declarations}
\label{sec:declarations}
%

%
%
\subsection{Funding}

The work of FN was supported by NSF grants 1740330 and 2028032. JH has received funding from the European Union's Horizon 2020 research and innovation programme under the Marie Sk\l{}odowska-Curie grant agreement No 712949 (TECNIOspring PLUS) and from the Agency for Business Competitiveness of the Government of Catalonia. The work of AD was also supported by the AFOSR grant FA9550-20-1-0138.

\subsection{Conflicts of interest/Competing interests}

The authors have no relevant financial or non-financial interests to disclose.

\subsection{Availability of data and material}
%

The datasets generated during and/or analyzed during the current study will be uploaded to the GitHub page \url{www.github.com/CU-UQ} once the paper is published. 

%
%
\subsection{Code availability}

The  code used in this study will be made available once the paper is published at the GitHub page \url{www.github.com/CU-UQ}. 

%
%
\subsection{Ethics approval}

Not applicable. 

\subsection{Consent to participate}

Not applicable. 

\subsection{Consent for publication}

Not applicable. 

\bibliographystyle{spbasic}
\bibliography{mybibfile}

\end{document}